\newtheorem{theorem}{Theorem}[section]
\newtheorem{lemma}[theorem]{Lemma}
\newtheorem{proposition}[theorem]{Proposition}
\theoremstyle{definition}
\newtheorem{definition}[theorem]{Definition}
\newtheorem{example}[theorem]{Example}
\theoremstyle{remark}
\newtheorem*{remark}{Remark}
\newcommand{\argmax}{\operatorname{argmax}}
\newcommand{\Hess}{\operatorname{D}^2\!}
\newcommand{\rn}{\R^n}
\newcommand{\K}{{\mathcal K}}
    \newcommand{\sn}{{\mathbb{S}^{n-1}}}
	\newcommand{\Bn}{B^n}
\DeclareMathOperator{\SL}{SL}
\renewcommand{\O}{\operatorname{O}}
\DeclareMathOperator{\oZ}{\operatorname{Z}}
\DeclareMathOperator{\oX}{\operatorname{X}}
\DeclareMathOperator{\oY}{\operatorname{Y}}
\newcommand{\G}{{\mathcal G}}
\newcommand{\R}{{\mathbb R}}
\newcommand{\N}{{\mathbb N}}
\newcommand{\sfe}{{\mathbb S}^{n-1}}
\newcommand{\ml}{{\mathcal H}^{n}}
\newcommand{\diam}{{\rm diam}}
\newcommand{\interno}{\operatorname{int}}
\newcommand{\dist}{\operatorname{dist}}
\newcommand{\dom}{\operatorname{dom}}
\newcommand{\dimension}{\operatorname{dim}}
\newcommand{\relint}{\operatorname{relint}}
\newcommand{\nor}{{\rm Nor}}
\newcommand{\hm}{{\mathcal H}}
\newcommand{\epi}{\operatorname{epi}}
\newcommand{\Borel}{{\mathcal B}}
\newcommand{\bd}{\operatorname{bd}}
\renewcommand{\d}{\,\mathrm{d}}
\newcommand{\reg}[2]{\operatorname{reg_{\it #2}\!}{#1}}
\newcommand{\fconv}{{\mbox{\rm Conv}_{\text{coe}}(\R^n)}} %convex and coercive functions on R^n
\newcommand{\fconvone}{{\mbox{\rm Conv}_{\text{coe}}(\R)}} %convex and coercive functions on R
\newcommand{\fconvx}{{\mbox{\rm Conv}(\R^n)}}%convex functions on R^n
\newcommand{\fconvh}{{\mbox{\rm Conv}_{\text{\rm hom}}(\R^n)}} % convex and homogeneous functions on R^n
\newcommand{\fconvxk}{{\mbox{\rm Conv}(\R^k)}} %convex functions on R^k
\newcommand{\faff}{{\mbox{\rm Aff}(\R^n)}}% affine functions on R^n
\numberwithin{equation}{section}
\begin{document}

\title{Hessian valuations}

%    Information for first author

%    Current address

\address{Andrea Colesanti:\
Dipartimento di Matematica e Informatica ``U. Dini''
Universit\`a degli Studi di Firenze,
Viale Morgagni 67/A - 50134, Firenze, Italy}
\email{colesant@math.unifi.it}

\address{Monika Ludwig:\
Institut f\"ur Diskrete Mathematik und Geometrie,
Technische Universit\"at Wien,
Wiedner Hauptstra\ss e 8-10/1046,
1040 Wien, Austria}
\email{monika.ludwig@tuwien.ac.at}

\address{Fabian Mussnig:\
Institut f\"ur Diskrete Mathematik und Geometrie,
Technische Universit\"at Wien,
Wiedner Hauptstra\ss e 8-10/1046,
1040 Wien, Austria}
\email{fabian.mussnig@alumni.tuwien.ac.at}

\author{Andrea Colesanti, Monika Ludwig \& Fabian Mussnig}

%    General info

\date{}

%\dedicatory{In memory of ...}

\begin{abstract} A new class of continuous valuations on the space of convex functions on $\R^n$ is introduced. On smooth convex functions, they are defined for $i=0,\dots,n$ by 
\begin{equation*}
u\mapsto \int_{\R^n} \zeta(u(x),x,\nabla u(x))\,[\Hess u(x)]_i\d x
\end{equation*}
where $\zeta\in C(\R\times\R^n\times\R^n)$ and $[\Hess u]_i$ is the $i$th elementary symmetric function of the eigenvalues of the Hessian matrix, $\Hess u$,  of $u$. Under suitable assumptions on $\zeta$, these valuations are shown to be invariant under translations and rotations on convex and coercive functions.

\bigskip

{\noindent
2000 AMS subject classification: 52B45 (26B25, 49Q20, 52A21, 52A41)}

\end{abstract}

\maketitle

\section{Introduction}

The purpose of this paper is to introduce a new type of valuations on convex functions. On smooth convex functions, they are defined for $i=0,\dots,n$ by  
\begin{equation}\label{intro 1}
u\mapsto \int_{\R^n} \zeta(u(x),x,\nabla u(x))\,[\Hess u(x)]_i \d x,
\end{equation}
where $\zeta: \R\times\R^n\times \R^n \to \R$ is continuous and $[\Hess u]_i$ is the $i$th elementary 
symmetric function of the eigenvalues of the Hessian matrix of $u$, with the usual convention 
$[\Hess u]_0=1$ (alternatively, $[\Hess u]_i$ is the sum of the
$i\times i$ principal minors of  $\Hess u$). We show that these functionals can be extended to a rather ample class of 
convex functions, providing a family of continuous valuations that we call \emph{Hessian valuations}. Here continuity is with respect to epi-convergence (see Section \ref{Preliminaries}). Under suitable assumptions on $\zeta$, Hessian valuations are invariant under rotations on convex functions and under translations and rotations on convex and coercive functions.

\medskip

The theory of valuations on function spaces has been rapidly growing in 
recent years. Several spaces of functions have been investigated already, including Lebesgue and Sobolev spaces, functions of bounded variations, and
quasi-concave functions (see 
\cite{Alesker-17,BaryshnikovGhristWright, BobkovColesantiFragala, ColesantiFragala, ColesantiLombardi, Colesanti-Lombardi-Parapatits,Kone,Ludwig:Fisher,Ludwig:survey,Ludwig:sobval,Ludwig:MM,Ma2016,Ober2014,Tsang:Lp,Tsang:Minkowski,Tuo_Wang_semi}). 
Needless to say, the major impulse to this area comes from the rich and beautiful theory of valuations defined on the family 
$\K^n$ of convex bodies (that is, compact convex subsets) in $\R^n$, which is one of the most active branches of convex geometry (see \cite[Chapter 6]{Schneider} for a recent survey on the theory of valuations on convex bodies).

\medskip

As a general framework, we start from the following family of convex functions:
$$
\fconvx=\{u\colon\R^n\to\R\cup\{+\infty\}\colon u\, \mbox{ convex and l.s.c.},\ u\not\equiv+\infty\},
$$
where l.s.c.\ stands for lower semicontinuous.  A (real-valued) valuation on $\fconvx$ is a functional $\oZ\colon\fconvx\to\R$ which has the additivity property
$$
\oZ(u\vee v)+\oZ(u\wedge v)=\oZ(u)+\oZ(v),
$$
for every $u,v\in\fconvx$ such that $u\wedge v\in\fconvx$. Here $\vee$ and $\wedge$ denote the pointwise maximum and minimum, respectively. 

\medskip\goodbreak

The basic tool to extend a functional of the form \eqref{intro 1} to $\fconvx$ are {\em Hessian measures} of convex functions (see Section \ref{Hessian measures}), that will be denoted by $\Theta_i(u,\cdot)$ for  $i\in\{0,\dots, n\}$ and $u\in\fconvx$. 
Like support measures of convex bodies, Hessian measures can be defined as coefficients of a local Steiner formula,
\begin{equation}\label{Steiner intro}
\hm^n(P_s(u,\eta))=\sum_{i=0}^n\binom nis^i\,\Theta_{n-i}(u,\eta),
\end{equation}
where $\hm^n$ is the $n$-dimensional Hausdorff measure and $s\ge0$ while for a Borel subset $\eta$  of $\R^{n}\times\R^n$, 
$$
P_s(u,\eta)=\{x+s y\colon (x,y)\in\eta,\ y\in\partial u(x)\}.
$$
Here $\partial u(x)$ denotes the subdifferential of $u$ at the point $x$  (see Section \ref{Preliminaries}).
Hessian measures permit to extend to non-smooth convex functions integrals of the elementary 
symmetric functions of the eigenvalues of the Hessian matrix. 
Indeed, if $u\in\fconvx\cap C^2(\R^n)$ and $i\in\{0,\dots, n\}$, then
$$
\Theta_i(u,\beta\times\R^n)=\int_{\beta}\,[\Hess u(x)]_{n-i}\d x
$$
for every Borel subset $\beta$ of $\R^n$. 
Hessian measures have been considered in connection with convex (and more general) functions in 
\cite{Colesanti-1997,Colesanti-Hug-2000,Colesanti-Hug-2000b, Fu-1989} and are related to non-linear elliptic partial differential equations (see, {\em e.g.}, \cite{Caffarelli-Nirenberg-Spruck, Colesanti-Salani, Trudinger:Wang1999}). 

\medskip

To define the integral \eqref{intro 1} for an arbitrary $u\in\fconvx$, we integrate $\zeta(u(x),x,y)$ with respect to the Hessian measure of $u$, where the variable $y$ plays the role of $\nabla u$. 
To guarantee integrability, we assume that $\zeta(t,x,y)$ has compact support with respect to the second and third variables, that is, there exists $r>0$ such that $\zeta$ vanishes in the complement set of the cylinder $\{(t,x,y)\colon|x|\le r, |y|\le r\}$, where $\vert \cdot\vert$ is the Euclidean norm. Our first result is the following.

\begin{theorem}\label{theorem intro 1} Let $\zeta\in C(\R\times\R^n\times\R^n)$ have compact support with respect to 
the second and third variables. For every $i\in\{0,1,\dots,n\}$, the functional $\oZ_{\zeta,i}\colon {\fconvx}\to\R$, defined by
\begin{equation}\label{Hessian valuations intro}
\oZ_{\zeta,i}(u)=\int_{\R^{2n}}\zeta(u(x),x,y)\d\Theta_i(u,(x,y)),
\end{equation}
is a continuous valuation on $\fconvx$. If $u\in\fconvx\cap C^2(\R^n)$, then $\oZ_{\zeta,i}(u)$ takes the form \eqref{intro 1}. 
\end{theorem}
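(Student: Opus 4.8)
The plan is to establish the three assertions of Theorem~\ref{theorem intro 1} in turn: that $\oZ_{\zeta,i}$ is well defined and finite, that it is a valuation, and that it is continuous with respect to epi-convergence; the final claim about $C^2$ functions will then be immediate from the stated identity $\Theta_i(u,\beta\times\R^n)=\int_\beta[\Hess u]_{n-i}\d x$ together with a change of variables on the graph of $\nabla u$. For finiteness, I would use that $\zeta$ has compact support in $(x,y)$: if $\zeta$ vanishes off $\{|x|\le r,\ |y|\le r\}$, then the integrand is supported on the set $\{(x,y): |x|\le r,\ y\in\partial u(x),\ |y|\le r\}$, and the total mass of $\Theta_i(u,\cdot)$ on this set is controlled, via the Steiner formula \eqref{Steiner intro}, by $\hm^n(P_s(u,\eta))$ for $\eta$ the relevant cylinder — a quantity bounded in terms of $r$ and $s$ because $P_s(u,\eta)$ is contained in a fixed ball. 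Boundedness of $\zeta$ on the compact set $[\min_{|x|\le r}u(x),\max]\times\{|x|\le r\}\times\{|y|\le r\}$ (using that $u$ is continuous on the interior of its domain and bounded on bounded subsets thereof) then gives $|\oZ_{\zeta,i}(u)|<\infty$.

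For the valuation property, the key observation is that the Hessian measures themselves are valuations in the appropriate sense: for $u,v\in\fconvx$ with $u\wedge v\in\fconvx$, one has a pointwise (or rather measure-theoretic) decomposition of $\Theta_i(u\vee v,\cdot)+\Theta_i(u\wedge v,\cdot)$ as $\Theta_i(u,\cdot)+\Theta_i(v,\cdot)$ when restricted to suitable subsets, essentially because the subdifferential of $u\wedge v$ at a point agrees with that of $u$ or of $v$ depending on which function is smaller there, and likewise for $u\vee v$. I would split $\R^n$ into the region $\{u<v\}$, $\{u>v\}$, and $\{u=v\}$; on the first two the contributions to all four terms match up in pairs, and the coincidence set $\{u=v\}$ contributes a term to $\oZ_{\zeta,i}(u\vee v)$ and $\oZ_{\zeta,i}(u\wedge v)$ that is handled by noting $u(x)=v(x)=(u\vee v)(x)=(u\wedge v)(x)$ there and that the first argument of $\zeta$ is therefore unambiguous — so the integrals again balance. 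This requires knowing that $\Theta_i$ is a valuation on $\fconvx$, which I expect is proved in the section on Hessian measures (Section~\ref{Hessian measures}) or follows from the corresponding statement for support measures of convex bodies applied to epigraphs; I would invoke that.

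For continuity I would take a sequence $u_k\to u$ epi-convergent in $\fconvx$ and show $\oZ_{\zeta,i}(u_k)\to\oZ_{\zeta,i}(u)$. The natural route is weak convergence of the measures $\zeta(u_k(\cdot),\cdot,\cdot)\,\Theta_i(u_k,\cdot)$, which I would deduce from two ingredients: weak convergence $\Theta_i(u_k,\cdot)\to\Theta_i(u,\cdot)$ (a known continuity property of Hessian measures under epi-convergence, again to be cited from Section~\ref{Hessian measures}), and convergence of the integrand — here one uses that epi-convergence of convex functions implies locally uniform convergence of $u_k$ to $u$ on the interior of $\dom u$, hence $\zeta(u_k(x),x,y)\to\zeta(u(x),x,y)$ uniformly on the relevant compact set, since $\zeta$ is (uniformly) continuous there. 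A standard approximation argument — write $\zeta(u_k(x),x,y)=\zeta(u(x),x,y)+o(1)$ uniformly, integrate against $\Theta_i(u_k,\cdot)$, use the uniform mass bound from the first paragraph, and then pass to the limit in $\int\zeta(u(x),x,y)\d\Theta_i(u_k,(x,y))$ by weak convergence — finishes the argument. The main obstacle I anticipate is the behaviour near the boundary of $\dom u$: epi-convergence controls $u_k$ only in the interior, and one must ensure that no mass of $\Theta_i(u_k,\cdot)$ escapes to, or concentrates on, $\bd(\dom u)\times\R^n$ in a way that $\zeta$ cannot see; the compact support of $\zeta$ in the gradient variable $y$ is precisely what rules this out, but making that rigorous — showing the relevant portion of $P_s(u_k,\eta)$ stays in a fixed compact set and that $\Theta_i(u_k,\cdot)$ assigns no limiting mass to a set where the integrand fails to converge — is the delicate point.
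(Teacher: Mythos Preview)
Your outline for the valuation property is sound and in fact somewhat cleaner than the paper's: you split according to $\{u<v\}$, $\{u>v\}$, $\{u=v\}$ and use that on $\{u<v\}\times\R^n$ one has $\Gamma_{u\wedge v}\cap\eta=\Gamma_u\cap\eta$ (hence $P_s$ and $\Theta_i$ agree there), while on $\{u=v\}$ the first argument of $\zeta$ is unambiguous and the measure identity $\Theta_i(u\vee v,\cdot)+\Theta_i(u\wedge v,\cdot)=\Theta_i(u,\cdot)+\Theta_i(v,\cdot)$ (Theorem~\ref{Hessian measure valuations}) suffices. The paper instead passes through the integral representation $\Theta_i(u,\cdot)=\theta_i(u,\cdot)\,\hm^n\llcorner\Gamma_u$ and proves the pointwise identity \eqref{tesi} for the densities, distinguishing the further subcase $(x_0,y_0)\in\Gamma_u\cap\Gamma_v$ versus not; your approach avoids that machinery.

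Where your proposal has a genuine gap is continuity. You correctly identify the obstacle --- epi-convergence gives locally uniform convergence of $u_k$ to $u$ only on $\interno(\dom u)$, not near $\bd(\dom u)$ --- but your suggestion that the compact support of $\zeta$ in $y$ ``rules this out'' is only a heuristic. The concrete difficulty is that your decomposition $\zeta(u_k(x),x,y)=\zeta(u(x),x,y)+o(1)$ need not hold uniformly on $\Gamma_{u_k}\cap\{|x|\le r,|y|\le r\}$: points $(x,y)$ with $x\in\bd(\dom u)$ and $|y|\le r$ can carry positive $\Theta_i$-mass (think of $u=I_K$), and there you have no control over $|u_k(x)-u(x)|$. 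The paper's device, which you are missing, is the Lipschitz regularization $u_r=\reg{u}{r}$ of Section~\ref{Approximation}: by Proposition~\ref{approximation lemma}\,(\ref{sechs}) one has $u(x)=u_r(x)$ whenever $\partial u(x)$ meets $\Bn_{1/r}$, so on the support of the integrand $\zeta(u(x),x,y)=\zeta(u_r(x),x,y)$ and likewise $\zeta(u_k(x),x,y)=\zeta((u_k)_r(x),x,y)$. Now $u_r$ and $(u_k)_r$ are \emph{finite} on all of $\R^n$, and by Proposition~\ref{approximation lemma 2} the $(u_k)_r$ epi-converge to $u_r$; for finite convex functions this is equivalent to uniform convergence on compact sets, and your splitting argument then goes through cleanly. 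The same trick tightens your finiteness argument: your claim that $u$ is ``bounded on bounded subsets'' of $\interno(\dom u)$ does not directly bound $u(x)$ for $x\in\bd(\dom u)\cap\pi_1(\Gamma_u^r)$, whereas $u_r$ is automatically continuous and bounded on $\{|x|\le r\}$.
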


\smallskip\noindent
For $\zeta$ and $i$ given as above, two additional properties of $\oZ_{\zeta,i}$ will be established.
\begin{enumerate}[(i)]
\setlength\itemsep{0.5em}
\item The functional $\oZ_{\zeta,i}$ is $i$-simple, that is, we have $\oZ_{\zeta,i}(u)=0$ for all  $u\in\fconvx$ such that $\dimension(\dom(u))<i$,
where $\dom(u)=\{x\in\R^n\colon u(x)<+\infty\}$ is the domain of $u$ and dim stands for dimension.
\item If $\zeta$ is of the form $\zeta(t,x,y)=\xi(t,|x|,|y|)$ for some $\xi\in C(\R\times[0,+\infty)^2)$,
then \eqref{Hessian valuations intro} is invariant under rotations, 
that is, $\oZ_{\zeta,i}(u)=\oZ_{\zeta,i}(u\circ \phi^{-1})$ for every $\phi\in\O(n)$.
\end{enumerate}

\begin{remark}\label{remark Alesker} 
In \cite{Alesker-17}, Alesker considers the space of convex functions on an open and convex set $U\subset \R^n$. Using Monge-Amp\`ere measures, he introduces a class of valuations which extend to general convex functions functionals of the form
$$
u\mapsto\int_U \xi(x)\det(\Hess u(x), \dots,\Hess u(x) ,A_{k+1}(x),\dots,A_n(x))\d x,
$$
where  the function $\xi\in C(\R^n)$ has compact support,  $\det$ is the mixed discriminant operator of $n$ matrices, the Hessian $\Hess u(x)$ appears $k$ times with $k\in\{0,\dots,n\}$, and $A_i$ is a symmetric $n\times n$ matrix having as coefficients continuous 
and compactly supported functions for every $i\in\{k+1,\dots,n\}$. There is an overlap between Alesker's valuations and the ones 
introduced here. Indeed, for $U=\R^n$, if $\zeta=\xi$ depends on the $x$-variable only and we choose $A_i\equiv I_n$ for $i=k+1,\dots,n$, where $I_n$ is
the identity matrix of order $n$, then the notions coincide for every $u\in\fconvx\cap C^2(\R^n)$.
\end{remark}

\medskip\goodbreak

In \cite{CavallinaColesanti, Colesanti-Ludwig-Mussnig-1, Colesanti-Ludwig-Mussnig-2, Mussnig}, valuations  defined
on the space of convex and coercive functions,
$$
\fconv=\{u\colon\R^n\to\R\cup\{+\infty\}\colon u\, \mbox{ convex, l.s.c., and coercive},\ u\not\equiv+\infty\}
$$
were studied and classified. Here a function $u:\R^n\to \R\cup\{+\infty\}$ is coercive if  $\lim_{|x|\to+\infty}u(x)=+\infty$.
In \cite{CavallinaColesanti, Colesanti-Ludwig-Mussnig-1}, the following type 
of  valuations on $\fconv$ were considered  for suitable $\zeta:\R\to \R$:
\begin{equation}\label{intro 3}
u\mapsto \int_{\R^n} \zeta(u(x))\d x.
\end{equation} 
In \cite{Colesanti-Ludwig-Mussnig-1}, it is proved that \eqref{intro 3} defines a continuous and rigid motion invariant valuation on 
$\fconv$ if and only if $\zeta$ is continuous on $\R$ and has finite $(n-1)$th moment. Moreover, it is proved that every continuous, 
non-negative, $\SL(n)$ and translation invariant 
valuation on $\fconv$ can be written as a functional of the form (\ref{intro 3}) plus a function only depending on $\min_{\R^n}u$. 

\medskip

We obtain new valuations on $\fconv$. The coercivity
guarantees that integrability is preserved under less restrictive conditions on $\zeta$. In particular, 
we may remove the dependence on the space variable $x$, gaining translation invariance. 

\begin{theorem}\label{theorem intro 2}
Let $\zeta\in C(\R\times \R^n)$ have compact support. For every $i\in\{0,\dots, n\}$, the functional 
$\oZ_{\zeta,i}\colon\fconv\to\R$, defined by
\begin{equation}\label{Hessian valuations 2 intro}
\oZ_{\zeta,i}(u)=\int_{\R^{2n}}\zeta(u(x),y)\d\Theta_i(u,(x,y)),
\end{equation}
is a continuous, translation invariant, $i$-simple valuation. Moreover, if there exists $\xi\in C(\R\times[0,+\infty))$ such that $\zeta(t,y)=\xi(t,|y|)$ on $\R\times\R^n$, then $\oZ_{\zeta,i}$ is also rotation invariant.
\end{theorem}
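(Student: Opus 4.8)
\textbf{Proof plan for Theorem \ref{theorem intro 2}.}
The plan is to reduce the claim to Theorem \ref{theorem intro 1} together with an approximation argument that handles the loss of compact support in the space variable. First, I would fix $\zeta\in C(\R\times\R^n)$ with compact support, say $\zeta(t,y)=0$ whenever $|t|>R$ or $|y|>R$, and observe that the functional in \eqref{Hessian valuations 2 intro} is well defined on $\fconv$: since $u$ is coercive, the sublevel set $\{u\le R\}$ is a compact convex set $K_R$, and on $\R^{2n}$ the integrand $\zeta(u(x),y)$ is supported in $K_R\times(R\Bn)$ with respect to the Hessian measure $\Theta_i(u,\cdot)$ (recall from the local Steiner formula that $\Theta_i(u,\cdot)$ is concentrated on pairs $(x,y)$ with $y\in\partial u(x)$, and $|y|\le R$ forces $x$ into a bounded region once $u$ is coercive and $u(x)\le R$). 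Hence the integral is finite, and the valuation property on $\fconv$ follows from the valuation property of $\Theta_i(\cdot,\cdot)$ exactly as in the proof of Theorem \ref{theorem intro 1}, restricted to those $u,v\in\fconv$ with $u\wedge v\in\fconv$ (coercivity of $u\wedge v$ is automatic here since $u\wedge v\ge\min(u,v)$ is not coercive in general — so instead I use that $u\vee v$ and the pair $u,v$ are coercive and that $\Theta_i$ is additive wherever $u\wedge v\in\fconvx$).

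Next I would establish continuity with respect to epi-convergence. Suppose $u_k\to u$ epi-converges in $\fconv$. The key point is a uniform localization: one shows that there is a single compact set $C\subset\R^n$ and a radius $R'$ such that, for all large $k$, the measures $\Theta_i(u_k,\cdot)$ restricted to the support of $(x,y)\mapsto\zeta(u_k(x),y)$ are all concentrated in $C\times(R'\Bn)$. This uses that epi-convergence of coercive convex functions implies local uniform convergence on $\R^n$ together with a uniform coercivity bound (if $u_k\to u$ and $u$ is coercive, then $\inf_k u_k$ is still coercive — this is a standard consequence of epi-convergence, and should already be available from the preliminaries or from \cite{Colesanti-Ludwig-Mussnig-1}). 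Once everything is supported in a fixed compact cylinder, pick a cutoff $\psi\in C_c(\R^n)$ with $\psi\equiv1$ on $C$ and set $\tilde\zeta(t,x,y)=\psi(x)\zeta(t,y)$; then $\tilde\zeta$ has compact support in the second and third variables, $\oZ_{\zeta,i}(u_k)=\oZ_{\tilde\zeta,i}(u_k)$ and $\oZ_{\zeta,i}(u)=\oZ_{\tilde\zeta,i}(u)$, and continuity follows from Theorem \ref{theorem intro 1}. The $i$-simplicity is immediate from the corresponding property in Theorem \ref{theorem intro 1} (or directly from the fact that $\Theta_i(u,\cdot)=0$ when $\dimension(\dom u)<i$), applied with the same cutoff.

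For translation invariance, let $\tau_z u(x)=u(x-z)$. One has $\partial(\tau_z u)(x)=\partial u(x-z)$, so the local Steiner sets satisfy $P_s(\tau_z u,\eta)=z+P_s(u,\eta-(z,0))$ where $\eta-(z,0)=\{(x-z,y)\colon(x,y)\in\eta\}$; since $\hm^n$ is translation invariant, comparing coefficients in \eqref{Steiner intro} gives $\Theta_i(\tau_z u,\eta)=\Theta_i(u,\eta-(z,0))$. Because $\zeta$ does not depend on $x$, the substitution $x\mapsto x-z$ leaves $\int\zeta(\tau_z u(x),y)\d\Theta_i(\tau_z u,(x,y))=\int\zeta(u(x),y)\d\Theta_i(u,(x,y))$, giving $\oZ_{\zeta,i}(\tau_z u)=\oZ_{\zeta,i}(u)$. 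Rotation invariance in the case $\zeta(t,y)=\xi(t,|y|)$ is handled identically to property (ii) after Theorem \ref{theorem intro 1}: for $\phi\in\O(n)$ one has $\partial(u\circ\phi^{-1})(x)=\phi\,\partial u(\phi^{-1}x)$, hence $\Theta_i(u\circ\phi^{-1},\cdot)$ is the pushforward of $\Theta_i(u,\cdot)$ under $(x,y)\mapsto(\phi x,\phi y)$, and since $\xi$ depends only on $u$ and $|y|$, both of which are invariant under this map, the integral is unchanged.

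I expect the main obstacle to be the uniform localization step in the continuity proof: one must rule out that mass of $\Theta_i(u_k,\cdot)$ escapes to spatial infinity while staying at bounded gradient level, which requires a quantitative uniform coercivity estimate for an epi-convergent sequence in $\fconv$ — roughly, that for every $\rho$ there exist $k_0$ and $M$ with $u_k(x)>R$ whenever $|x|>M$ and $k\ge k_0$. Granting such an estimate (which is the natural coercive analogue of the compactness already used implicitly in Theorem \ref{theorem intro 1}), the rest is a routine reduction via cutoff functions to the already-proven statement.
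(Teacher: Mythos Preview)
Your approach is essentially the paper's own: finiteness from coercivity plus compact support of $\zeta$, the valuation and $i$-simplicity properties inherited from those of $\Theta_i$, translation invariance from Proposition~\ref{behavior Hessian measures} by the change of variables $(x-x_0,y)\mapsto(\bar x,\bar y)$, and rotation invariance exactly as in Proposition~\ref{rotation invariance}. One correction: your parenthetical about $u\wedge v$ is confused---for $u,v\in\fconv$ one has $u\wedge v=\min(u,v)$, and the minimum of two coercive functions is coercive, so $u\wedge v\in\fconvx$ and convex automatically gives $u\wedge v\in\fconv$; there is no issue to work around. On continuity, your cutoff reduction to Theorem~\ref{theorem intro 1} via a uniform-coercivity localization is a legitimate and slightly more explicit route than the paper's, which simply asserts that the proof of Theorem~\ref{theorem Hessian valuations} goes through verbatim; the paper's version implicitly uses the same uniform localization (the Lipschitz regularizations $w_k=\reg{u_k}{r}$ converge uniformly on compact sets and the supports of $\zeta(u_k(x),y)$ stay in a fixed compact set), so the content is the same. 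The uniform bound you flag---that $\{u_k\le R\}$ is eventually contained in a fixed compact set when $u_k$ epi-converges to a coercive $u$---is indeed the only extra ingredient, and it is the standard level-set consequence of epi-convergence used in \cite{Colesanti-Ludwig-Mussnig-1}.
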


Hence, for every $\zeta\in C(\R\times[0,+\infty))$ with compact support, the valuation
$$
u\mapsto \int_{\R^{2n}}\zeta(u(x),|y|)\d\Theta_i(u,(x,y))
$$
is continuous and rigid motion invariant for  $i\in\{0,\dots, n\}$. 

\medskip

We mention that in \cite{CavallinaColesanti} the construction of a class of valuations on $\fconv$ is presented, based on 
quermassintegrals (or intrinsic volumes) of level sets. Similar ideas can be found in 
\cite{BobkovColesantiFragala,Milman-Rotem,Milman-Rotem2}. A
comparison between those and Hessian valuations is carried out in Section \ref{sub-section comparison}, where in particular
we show that there are Hessian valuations that are not included in the class described in \cite{CavallinaColesanti}.

\medskip

In the following, we collect results needed for the preparation of the proofs of the main results.  In Section~\ref{Convex functions}, we recall some basic facts on convex functions.  In Section \ref{Approximation},  the Lipschitz regu\-larization of convex functions is described.
Sections \ref{Inclusion-exclusion properties of the subgradient map} and \ref{The graph of the subgradient map and the parallel set of a function} are devoted
to inclusion-exclusion type properties of the subdifferential, the subdifferential graph and local parallel sets of convex functions. These properties will
be critical to prove the valuation property of Hessian measures, which is discussed in Sections \ref{Hessian measures} and
\ref{Further properties of Hessian measures}.
In particular, in Section \ref{Hessian measures} we extend to $\fconvx$ the results of \cite{Colesanti-Hug-2000} concerning existence and integral representations of Hessian
measures, while in the next section we discuss several (known and new) facts about them.

Subsequently, after giving the main definitions for valuations on $\fconvx$ in Section \ref{Valuations}, we prove Theorems \ref{theorem intro 1} and \ref{theorem intro 2}
in Sections \ref{Hessian valuations} and \ref{Valuations on fconv}.  
In Section \ref{The space of 1-homogeneous convex functions}, we briefly analyze what happens if we restrict Hessian valuations to the subset $\fconvh$
of $\fconvx$, formed  by finite convex functions homogeneous of degree 1, that is, by support functions of convex bodies. Note that continuous valuations on 
$\fconvh$ correspond to continuous valuations on $\K^n$.
 
\section{Preliminaries}\label{Preliminaries} 

We work in $n$-dimensional Euclidean space, $\R^n$, for $n\ge1$, endowed with the usual Euclidean norm $|\cdot|$ and 
scalar product $\langle\cdot,\cdot\rangle$. We set $\sn=\{x\in\R^n\colon |x|=1\}$ and $\Bn=\{x\in\R^n: \vert x \vert \le 1\}$. For $r>0$, let $\Bn_r$ be the closed ball of $\R^{n}$, centered at the origin and with radius $r$. 
Given a subset $A$ of $\R^n$, its interior and  boundary will be denoted by $\interno(A)$ and $\bd(A)$, respectively.
For $k\in[0,n]$, the $k$-dimensional Hausdorff measure in $\R^n$ is denoted by $\mathcal{H}^k$. In particular, $\ml$ is the Lebesgue measure 
in $\R^n$. 
We write $\pi_1$ and $\pi_2$  for the canonical projections of $\R^n\times\R^n$ onto the first and the second component, respectively; that is,
$$
\pi_1(x,y)=x,\quad \pi_2(x,y)=y
$$
for $(x,y)\in\R^n\times\R^n$.
\medskip

A subset $C$ of $\R^n$ is convex if for every $x_0, x_1\in C$ and $t\in[0,1]$, we have $(1-t)x_0+tx_1\in C$. If $C\subset\R^n$ is convex, we define
its dimension, $\dimension(C)$, as the minimum integer $k\in\{0,1,\dots,n\}$ such that there exists an affine subspace  of dimension $k$ containing $C$. 
The relative interior of a convex set $C$ of dimension $k$ is the subset of those points
$x$ of $C$ for which there exists a $k$-dimensional ball centered at $x$ and contained in $C$. The relative 
interior will be denoted by $\relint(C)$. Note that $\dimension(C)=n$ if and only if $\relint(C)=\interno(C)$. The group of rotations of $\R^n$ will be denoted by $\O(n)$. By a {\em rigid motion} we mean the composition of a translation and an element of
$\O(n)$.

\medskip

A {\em convex body} is a compact convex subset of $\R^n$. We will denote by $\K^n$ the family of convex bodies in $\R^n$.
Our main reference for the theory of convex body is the monograph \cite{Schneider}.
We will sometimes need separation theorems, especially for convex sets. For the notion of separation, strict separation and strong separation
of two subsets of $\R^n$ by a hyperplane we refer to\cite[Section 1.3]{Schneider}.

\medskip
If $u$ and $v$ are functions defined in $\R^n$, taking values in $\R$ or in $\R\cup\{+\infty\}$, we denote by $u\vee v$ and $u\wedge v$ the 
pointwise maximum function and the pointwise minimum function of $u$ and $v$, respectively. In other words, for $x\in\R^n$, 
$$
(u\vee v)(x)=\max\{u(x),v(x)\},\quad
(u\wedge v)(x)=\min\{u(x),v(x)\}.
$$
For $u\colon\R^n\to\R\cup\{+\infty\}$, we define its epigraph by
$$
\epi(u)=\{(x,t)\in\R^n\times\R\colon t\ge u(x)\}.
$$
We say that a function $w\colon\R^n\to\R$ is affine if there exist $y\in\R^n$ and $c\in\R$ such that
$
w(x)=\langle x,y\rangle +c$
for $x\in\R^n$.  We denote by $\faff$ the family of affine functions on $\R^n$.

\section{Convex Functions}\label{Convex functions}

A function $u\colon\R^n\to\R\cup\{+\infty\}$ is convex if for every $x_0, x_1\in\R^n$ and for every $t\in[0,1]$, we have
$$
u((1-t)x_0+tx_1)\le(1-t)u(x_0)+t u(x_1).
$$
Recall that a function $u$ is convex  if and only if  
the {epigraph} of $u$ is a convex subset of $\R^n\times\R$. Note that  the lower semicontinuity of $u$ is equivalent to $\epi(u)$ being closed. Such functions are also called \emph{closed}. 
%A convex function $u\colon\R^n\to\R\cup\{+\infty\}$ is called {\em proper}, if $u\not\equiv +\infty$.
Our main reference texts on convex analysis -- the theory of convex functions -- are \cite{Rockafellar} and
\cite{Rockafellar-Wets}.

\medskip

For $u\in\fconvx$, we define the  {\it domain} of $u$ as
$$
\dom(u):=\{x\in\R^n\colon  u(x)<+\infty\}.
$$
By the convexity of $u$, its domain is a convex set. Every convex function $u$ is continuous in the interior of $\dom(u)$.
\goodbreak
\medskip

\begin{example}\label{example 1}
{\rm Let $K$ be a convex body in $\R^n$. The {\it convex indicator function},
$I_K\colon \R^n\to\R\cup\{+\infty\}$,  of $K$ is defined by
$$
I_K(x)=
\left\{
\begin{array}{ll}
0\quad&\mbox{if $x\in K$,}\\
+\infty\quad&\mbox{if $x\notin K$.}
\end{array}
\right.
$$
Hence $I_K\in\fconv$ for every $K\in\K^n$.}
\end{example}

\begin{example}\label{example 2}
{\rm Another function  associated to a convex body $K$ is its {\it support function}, denoted by $h_K$ and defined on $\R^n$ by
$$
h_K(y)=\sup_{x\in K}\langle x,y\rangle
$$
(see \cite[Chapter 1]{Schneider}). The support function $h_K$ is a 1-homogeneous convex function for every $K\in\K^n$, where $f:\R^n\to \R$ is called 1-homogeneous, if $f(t\,x)=t\,u(x)$ for all $t> 0$ and $x\in \R^n$. In particular,
the support function is everywhere finite by the compactness of $K$.}
\end{example}

The pointwise maximum of two convex functions is again a convex function, but this does not guarantee that $\fconvx$ is closed with respect to
$\vee$, as $u\vee v$ may be identically $+\infty$ for some $u,v\in\fconvx$. On the other hand, it is easy to see that if $u,v\in\fconvx$ are such that
$u\wedge v\in\fconvx$, then $u\vee v\in\fconvx$.

\subsection{The conjugate of a convex function}
We recall the notion of conjugate of a convex function (see \cite{Rockafellar}). For $u\in\fconvx$ and $y\in\R^n$, we set
$$
u^*(y):=\sup\nolimits_{x\in\R^n}\big(\langle x,y\rangle-u(x)\big).
$$
As every function $u\in\fconvx$ is closed with $u\not\equiv+\infty$, the following result is a consequence of  Theorem~12.2 and Corollary 12.2.1 in \cite{Rockafellar}.

\begin{proposition}\label{Schneider proposition} If $u\in\fconvx$, then $u^*\in\fconvx$ and $u^{**}:=(u^*)^*=u$.
\end{proposition}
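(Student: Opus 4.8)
The plan is to prove the two assertions directly; together they amount to the Fenchel--Moreau theorem, and they may also be quoted from Theorem~12.2 and Corollary~12.2.1 in \cite{Rockafellar}. First I would check that $u^*$ is convex and lower semicontinuous: for each fixed $x\in\dom(u)$ the map $y\mapsto\langle x,y\rangle-u(x)$ is affine, and $u^*$ is the pointwise supremum of this family of continuous convex functions over $x\in\dom(u)$, hence convex and l.s.c. Also $u^*\not\equiv-\infty$, since evaluating at any fixed $x_1\in\dom(u)$ gives $u^*(y)\ge\langle x_1,y\rangle-u(x_1)>-\infty$. To see $u^*\not\equiv+\infty$, I would produce an affine minorant of $u$: since $u$ is closed and proper, $\epi(u)$ is a nonempty closed convex subset of $\R^n\times\R$; choosing $x_0\in\dom(u)$ and $t_0<u(x_0)$, the point $(x_0,t_0)$ lies outside $\epi(u)$ and can be strongly separated from it by a hyperplane \cite[Section~1.3]{Schneider}. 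Because $\epi(u)$ contains upward vertical rays and $x_0\in\dom(u)$, this hyperplane is non-vertical, so after normalizing it yields an affine function $\ell$ with $\ell\le u$ on $\R^n$; then $u^*$ is finite at the slope of $\ell$. Hence $u^*\in\fconvx$, and by the same token so is $u^{**}$.

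Next I would establish $u^{**}=u$. The inequality $u^{**}\le u$ is immediate: for all $x,y$ the definition of $u^*$ gives $\langle x,y\rangle-u^*(y)\le u(x)$, and taking the supremum over $y$ yields $u^{**}(x)\le u(x)$. For the reverse inequality I would argue by contradiction, assuming $u^{**}(x_0)<u(x_0)$ for some $x_0$; note that $u^{**}(x_0)>-\infty$ since $u^*$ is finite somewhere. Pick $t_0$ with $u^{**}(x_0)<t_0<u(x_0)$ (with $t_0$ arbitrary if $u(x_0)=+\infty$), so that $(x_0,t_0)\notin\epi(u)$, and strongly separate it from $\epi(u)$. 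If the separating hyperplane is non-vertical, normalization gives an affine function $\ell\le u$ whose graph passes strictly above $(x_0,t_0)$; this bounds $u^*$ at the slope of $\ell$ from above, which forces $u^{**}(x_0)\ge t_0$, contradicting the choice of $t_0$.

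The delicate case, which I expect to be the main obstacle, is a vertical separating hyperplane; this can only occur when $x_0\notin\dom(u)$, i.e.\ $u(x_0)=+\infty$. Here the separation only provides a vector $y_0$ with $\langle x,y_0\rangle>\gamma$ for all $x\in\dom(u)$ while $\langle x_0,y_0\rangle<\gamma$. To handle it I would start from an affine minorant $w_0\le u$ (available from the first paragraph) and form $w_\lambda:=w_0-\lambda(\langle\,\cdot\,,y_0\rangle-\gamma)$ for $\lambda>0$; since $\langle x,y_0\rangle-\gamma>0$ on $\dom(u)$, we still have $w_\lambda\le u$ on all of $\R^n$, hence $u^*$ is bounded above at the slope of $w_\lambda$ by a quantity that is affine in $\lambda$. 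Evaluating the resulting lower bound for $u^{**}(x_0)$ and letting $\lambda\to+\infty$ (using $\gamma-\langle x_0,y_0\rangle>0$) forces $u^{**}(x_0)=+\infty$, again a contradiction. This is precisely the step where the lower semicontinuity and properness of $u$ are used in full. Combining the two inequalities gives $u^{**}=u$.
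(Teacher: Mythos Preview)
Your argument is correct and is precisely the standard Fenchel--Moreau proof; in fact the paper does not give a self-contained argument at all but simply notes that functions in $\fconvx$ are closed and proper and invokes Theorem~12.2 and Corollary~12.2.1 of \cite{Rockafellar}, the very references you cite at the outset. So your proposal subsumes the paper's proof and supplies the details behind the citation; nothing is missing.
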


%\begin{remark} For every $K\in\K^n$, we have $(I_K)^*=h_K$. \end{remark}

\goodbreak
The next result follows easily from \cite[Theorem 16.5]{Rockafellar}.

\begin{proposition}\label{minmaxconjugates} If $u, v\in\fconvx$ are such that 
$u\wedge v\in\fconvx$, then 
\begin{equation}\label{relazione 1}
(u\wedge v)^*=u^*\vee v^*,
\end{equation}
and
\begin{equation}\label {relazione 2}
(u\vee v)^*=u^*\wedge v^*.
\end{equation}
In particular $u^*\wedge v^*$ is convex.
\end{proposition}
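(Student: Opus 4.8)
The plan is to derive both identities from the general duality between pointwise maxima/minima and infimal/additive operations on conjugates, as encoded in \cite[Theorem 16.5]{Rockafellar}. Recall that for proper convex functions, taking conjugates exchanges pointwise minimum with a "max-type" formula only under a closure/properness hypothesis. Concretely, for any two functions $f,g$ on $\R^n$ one has $(f\wedge g)^* = f^*\vee g^*$ directly from the definition of the conjugate, since
\[
(f\wedge g)^*(y)=\sup_x\big(\langle x,y\rangle-\min\{f(x),g(x)\}\big)=\sup_x\max\big\{\langle x,y\rangle-f(x),\langle x,y\rangle-g(x)\big\}=\max\{f^*(y),g^*(y)\}.
\]
So \eqref{relazione 1} requires essentially no hypothesis beyond what is needed to know that $(f\wedge g)^*$ is the conjugate of an element of $\fconvx$; under the assumption $u\wedge v\in\fconvx$ the left-hand side is a legitimate conjugate, and we already know $u^*,v^*\in\fconvx$ by Proposition~\ref{Schneider proposition}, so $u^*\vee v^*$ is at least convex and l.s.c., and it is not identically $+\infty$ because it equals $(u\wedge v)^*\in\fconvx$.

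Next I would obtain \eqref{relazione 2} by applying \eqref{relazione 1} to the conjugate functions and using biconjugation. Since $u\wedge v\in\fconvx$, Proposition~\ref{Schneider proposition} gives $u^{**}=u$ and $v^{**}=v$; the subtle point is whether $u^*\wedge v^*\in\fconvx$, i.e.\ whether it is l.s.c.\ and proper. Here \cite[Theorem 16.5]{Rockafellar} enters: it states precisely when the conjugate of a sum/infimal-convolution-type operation behaves well, equivalently when $(u^*\wedge v^*)^{*}= u^{**}\vee v^{**} = u\vee v$ without taking an extra closure. The hypothesis $u\wedge v\in\fconvx$ guarantees that $\relint(\dom u)\cap\relint(\dom v)\neq\emptyset$ (otherwise $u\wedge v$ would fail to be convex, cf.\ Section~\ref{Convex functions}), which is exactly the qualification condition in \cite[Theorem 16.5]{Rockafellar} ensuring that the relevant operations are closed. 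Thus $u^*\wedge v^*$ is closed and proper, hence lies in $\fconvx$, and moreover $(u^*\wedge v^*)^*=u\vee v$. Taking conjugates once more and invoking Proposition~\ref{Schneider proposition} on $u^*\wedge v^*$ yields $(u\vee v)^* = (u^*\wedge v^*)^{**}=u^*\wedge v^*$, which is \eqref{relazione 2}. The final assertion that $u^*\wedge v^*$ is convex is then immediate, since we have just shown it equals $(u\vee v)^*$, a conjugate function.

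The main obstacle I anticipate is not the computation for \eqref{relazione 1}, which is essentially a one-line manipulation of suprema, but rather correctly pinning down the hypothesis under which \eqref{relazione 2} holds without an extraneous closure operation. The naive dual statement $(u\vee v)^* = \mathrm{cl}(u^*\wedge v^*)$ always holds, but the theorem claims the sharper equality without closure; this is exactly where $u\wedge v\in\fconvx$ must be translated into the domain-overlap qualification condition needed to apply \cite[Theorem 16.5]{Rockafellar}. I would therefore spend the bulk of the argument verifying that $u\wedge v\in\fconvx$ forces the domains of $u$ and $v$ to overlap in their relative interiors (equivalently, that $u$ and $v$ agree on enough of a common affine piece for $u\wedge v$ to be convex), and then cite \cite[Theorem 16.5]{Rockafellar} to conclude both that $u^*\wedge v^*$ is itself closed and proper — so that it belongs to $\fconvx$ — and that its conjugate is $u\vee v$ on the nose.
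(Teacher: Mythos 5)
Your computation for \eqref{relazione 1} is correct and is exactly the paper's argument, and your overall architecture for \eqref{relazione 2} --- apply \eqref{relazione 1} to $u^*,v^*$ and then biconjugate --- is also the paper's. The genuine gap is in the step where you justify that $u^*\wedge v^*$ is closed, convex and proper, so that biconjugation returns it unchanged. You claim that $u\wedge v\in\fconvx$ forces $\relint(\dom(u))\cap\relint(\dom(v))\neq\emptyset$, ``otherwise $u\wedge v$ would fail to be convex.'' This is false: take $u=I_{[0,1]}$ and $v=I_{[1,2]}$ on $\R$. Then $u\wedge v=I_{[0,2]}$ is convex, l.s.c.\ and proper, yet $\relint(\dom(u))=(0,1)$ and $\relint(\dom(v))=(1,2)$ are disjoint (even simpler: $u=I_{\{0\}}$, $v=I_{[0,1]}$). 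Since your entire argument for \eqref{relazione 2} is routed through this implication --- and you explicitly say the bulk of the work would go into verifying it --- the proposed proof cannot be completed as described. Note that the conclusion of the proposition does hold in these examples (in the first one $u^*\wedge v^*$ is the linear function $y\mapsto y$, which equals $(u\vee v)^*=h_{\{1\}}$), so it is only your justification, not the statement, that fails.

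A second, related problem is the appeal to \cite[Theorem 16.5]{Rockafellar}. That theorem concerns conjugates of convex hulls of pointwise infima and of pointwise suprema, and it carries no relative-interior qualification; the condition $\relint(\dom(u))\cap\relint(\dom(v))\neq\emptyset$ you invoke is the hypothesis of \cite[Theorem 16.4]{Rockafellar}, which governs conjugates of \emph{sums} and \emph{infimal convolutions} --- different operations from $\wedge$ and $\vee$. Even if the relative interiors did meet, that theorem would give closedness of the infimal convolution $u^*\,\square\,v^*$, not of the pointwise minimum $u^*\wedge v^*$. The step that actually has to be supplied --- and the one the paper disposes of by combining Proposition \ref{Schneider proposition} with \cite[Theorem 16.5]{Rockafellar} --- is that $u^*\wedge v^*$ coincides with its closed convex hull $(u^*\wedge v^*)^{**}=(u\vee v)^*$; equivalently, one must prove the pointwise inequality $(u\vee v)^*\ge u^*\wedge v^*$ using the convexity of $u\wedge v$ (for instance via Lemma \ref{lemma 0} or a separation argument). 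Your proposal contains no argument for this inequality, and the qualification condition you propose to substitute for it is both inapplicable and not implied by the hypotheses.
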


\begin{proof}
For every $y\in\R^n$, we have
\begin{eqnarray*}
(u\wedge v)^*(y)&=&\sup\nolimits_{x\in\R^n}\big(\langle x,y\rangle-(u\wedge v)(x)\big)\\
&=&\sup\nolimits_{x\in\R^n}\big(\max\big\{\langle x,y\rangle -u(x),\langle x,y\rangle-v(x)\big\}\big)\\
&=&\max\big\{\sup\nolimits_{x\in\R^n}\big(\langle x,y\rangle-u(x),\sup\nolimits_{x\in\R^n}\big(\langle x,y\rangle-v(x)\big)\big\}\\
&=&(u^*\vee v^*)(y).
\end{eqnarray*}
This proves \eqref{relazione 1}.  Together with Proposition \ref{Schneider proposition} this gives
$$(u^* \wedge v^*) = ((u^* \wedge v^*)^*)^* = ((u^*)^* \vee (v^*)^*)^*=(u \vee v)^*,$$
which shows \eqref{relazione 2}.
\end{proof}

%Proposition \ref{minmaxconjugates} has a corresponding statement for convex bodies: see Theorem 1.6.3 in \cite{Schneider}.

\subsection{Topology in $\fconvx$}\label{section topology}

As in \cite{Colesanti-Ludwig-Mussnig-1} and \cite{Colesanti-Ludwig-Mussnig-2}, we adopt the topology induced by {\em epi-convergence}. 
A sequence $u_k$ of elements of $\fconvx$ is epi-convergent to $u\in\fconvx$ if for every $x\in\R^n$ the following conditions hold.
\begin{itemize}
	\item[(i)] For every sequence $x_k$ that converges to $x$,
			\begin{equation*}\label{eq:gc_inf}
				u(x) \leq \liminf_{k\to +\infty} u_k(x_k).
			\end{equation*}
	\item[(ii)] There exists a sequence $x_k$ that converges to $x$ such that
			\begin{equation*}\label{eq:gc_sup}
				u(x) = \lim_{k\to+\infty} u_k(x_k).
			\end{equation*}
\end{itemize}

An exhaustive source for epi-convergence of convex functions is \cite{Rockafellar-Wets}, where also the following result can be found.

\begin{proposition}\label{convergence conjugates} A sequence $u_k$ of functions from $\fconvx$ epi-converges to $u\in\fconvx$
if and only if the sequence $u_k^*$ epi-converges to $u^*$. 
\end{proposition}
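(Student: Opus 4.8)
The statement to prove is Proposition~\ref{convergence conjugates}: a sequence $u_k$ in $\fconvx$ epi-converges to $u$ if and only if $u_k^*$ epi-converges to $u^*$.

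\textbf{Plan of proof.} The natural approach is to exploit the fact that Legendre--Fenchel conjugation is an involution on $\fconvx$ (Proposition~\ref{Schneider proposition}), so that it suffices to prove just one implication: if $u_k$ epi-converges to $u$, then $u_k^*$ epi-converges to $u^*$. The reverse implication then follows by applying this to the sequence $u_k^*$ and using $u_k^{**}=u_k$, $u^{**}=u$. So the whole content is the single implication.

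To prove that implication, the cleanest route is to use the known equivalence between epi-convergence of closed convex functions and convergence of the epigraphs in the sense of Painlev\'e--Kuratowski (this is standard and available in \cite{Rockafellar-Wets}), together with the duality between an epigraph and the epigraph of the conjugate. Concretely, I would first recall that for $u\in\fconvx$, $u_k\to u$ epi-converges iff $\epi(u_k)\to\epi(u)$ as closed sets in $\R^n\times\R$ in the Painlev\'e--Kuratowski sense, i.e. $\epi(u)=\liminf_k \epi(u_k)=\limsup_k\epi(u_k)$. Then I would verify that the two conditions (i) and (ii) in the definition of epi-convergence are precisely the statements $\epi(u)\subseteq\liminf_k\epi(u_k)$ (lower semicontinuity of the limit set; condition (ii)) and $\limsup_k\epi(u_k)\subseteq\epi(u)$ (condition (i))—a short unwinding of definitions. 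Alternatively, and perhaps more in the spirit of this paper where a self-contained argument may be preferred, one can argue directly with the inequalities: given $y\in\R^n$, to check condition (ii) for $u_k^*$ at $y$, pick $x$ nearly attaining $u^*(y)=\sup_x(\langle x,y\rangle-u(x))$, use condition (ii) for $u_k$ to get $x_k\to x$ with $u_k(x_k)\to u(x)$, and take $y_k\to y$ appropriately (e.g. $y_k=y$ works once one controls things, or one perturbs slightly); then $u_k^*(y_k)\ge\langle x_k,y_k\rangle-u_k(x_k)\to\langle x,y\rangle-u(x)$, giving $\liminf_k u_k^*(y_k)\ge u^*(y)-\epsilon$. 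For condition (i) for $u_k^*$, take any $y_k\to y$; for each fixed $x$ and a sequence $x_k\to x$ from condition (ii) for $u_k$, one has $u_k^*(y_k)\ge\langle x_k,y_k\rangle-u_k(x_k)\to\langle x,y\rangle-u(x)$, and taking the supremum over $x$ after the $\liminf$ gives $\liminf_k u_k^*(y_k)\ge u^*(y)$, which is stronger than (i). The delicate point in the direct approach is the reverse-type bound needed for (ii), where one must also know that the $\liminf$ does not overshoot; this is where invoking $u_k^{**}=u_k$ and applying the (already proven) easy direction to the conjugates closes the loop, or one cites the epigraph characterization to get both inclusions simultaneously.

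\textbf{Main obstacle.} The real subtlety is that epi-convergence is genuinely a two-sided condition, and naive estimates only give one side cheaply; the other side requires either the Painlev\'e--Kuratowski epigraph reformulation (where the polar/duality correspondence between a convex set and its polar is continuous in the right sense) or a bootstrap using the involution property. Since the cleanest and shortest honest proof simply cites the epigraph-convergence characterization from \cite{Rockafellar-Wets} and the standard fact that set convergence of epigraphs is preserved under the conjugacy correspondence, I would expect the proof here to be essentially a pointer to \cite{Rockafellar-Wets} (e.g. the relevant theorem on continuity of conjugation with respect to epi-convergence), reduced to the observation that conjugation is an involution so only one direction is needed. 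I do not anticipate any computational difficulty—the entire matter is choosing which standard equivalence to quote and noting the involution trick.
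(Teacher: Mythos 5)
Your proposal matches the paper's treatment: the paper gives no proof at all and simply points to \cite{Rockafellar-Wets}, where this is Wijsman's theorem on the bicontinuity of the Legendre--Fenchel transform with respect to epi-convergence, and your final recommendation is exactly that citation combined with the involution $u^{**}=u$. One small caveat: the involution ``bootstrap'' you propose for the missing $\limsup$ bound in condition (ii) does not actually close the gap on its own (you would need to already know that $u_k^*$ epi-converges before applying the easy direction to the conjugates), so the citation to the epigraph/set-convergence characterization is genuinely carrying the load, as you correctly suspect.
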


\begin{remark} If $u_k$ is a sequence of {\em finite} functions in $\fconvx$, that is,  $\dom(u_k)=\R^n$ for every $k$, and $u\in\fconvx$
is also finite, then $u_k$ epi-converges to $u$ if and only if it converges to $u$ pointwise in $\R^n$ and uniformly on compact
sets (see for instance \cite[Theorem 7.17]{Rockafellar-Wets}).  
\end{remark}

\subsection{Subdifferentials}

Let $u\in\fconvx$ and $x\in\R^n$. A vector $y\in\R^n$ is said  to be a {\em subgradient} of $u$ at  $x$ if
$$
u(z)\ge u(x)+\langle z-x,y\rangle
$$
for all $z\in\R^n$. The (possibly empty) set of all vectors with this property will be denoted by $\partial u(x)$ and called the 
\emph{subdifferential} of $u$ at $x$. In particular, $\partial u(x)=\emptyset$ for every $x\notin\dom(u)$. Also note that
if $x\in\R^n$ is such that $\partial u(x)\ne \emptyset$, then 
$\partial u(x)$ is closed and convex.

\begin{remark}\label{non-empty subgradient}If $u\in\fconvx$, then there exists at least one point $x\in\R^n$ such that $\partial u(x)\ne\emptyset$, since $\dom(u)\ne\emptyset$ implies that  $\relint(\dom(u))\ne\emptyset$ and by \cite[p.\ 227]{Rockafellar}.\end{remark}

The subdifferential is strongly connected to directional derivatives. Let $u\in\fconvx$. Following the notation used in \cite{Rockafellar}, for  $x\in\dom(u)$ and a vector $y\in\R^n$,  we set
$$
u'(x;y)=\lim_{t\to0^+}\frac{u(x+ty)-u(x)}{t}.
$$ 
This limit always exists, finite or not, that is, $u'(x;y)\in\R\cup\{\pm\infty\}$ (see \cite[Theorem 23.1]{Rockafellar}). The following result is  \cite[Theorem 23.2]{Rockafellar}.

\begin{lemma}\label{rock lemma} Let $u\in\fconvx$ and $x\in\dom(u)$. For $y\in\R^n$, we have $y\in\partial u(x)$ if and only if
$\langle z,y\rangle\le\ u'(x;z)$ for all $z\in\R^n$.
\end{lemma}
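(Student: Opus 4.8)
The plan is to prove both implications directly from the definition of the subdifferential, using only the elementary monotonicity of difference quotients of a convex function.

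For the ``only if'' direction, I would assume $y\in\partial u(x)$. Applying the subgradient inequality $u(z)\ge u(x)+\langle z-x,y\rangle$ with $z=x+tw$ for an arbitrary $w\in\R^n$ and $t>0$ yields $\frac{u(x+tw)-u(x)}{t}\ge\langle w,y\rangle$; letting $t\to0^+$ and recalling that this limit exists in $\R\cup\{\pm\infty\}$ (see \cite[Theorem 23.1]{Rockafellar}) gives $u'(x;w)\ge\langle w,y\rangle$. Since $w$ was arbitrary, this is the desired inequality.

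For the ``if'' direction, the key observation is that for fixed $w\in\R^n$ the map $t\mapsto\frac{u(x+tw)-u(x)}{t}$ is nondecreasing on $(0,+\infty)$: indeed, for $0<s<t$, convexity applied to the identity $x+sw=(1-\tfrac{s}{t})x+\tfrac{s}{t}(x+tw)$ gives $u(x+sw)-u(x)\le\tfrac{s}{t}\bigl(u(x+tw)-u(x)\bigr)$, and dividing by $s$ yields the monotonicity. Consequently $u'(x;w)=\inf_{t>0}\frac{u(x+tw)-u(x)}{t}$; in particular, taking $t=1$, we obtain $u'(x;w)\le u(x+w)-u(x)$ for every $w\in\R^n$. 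Now assume $\langle w,y\rangle\le u'(x;w)$ for all $w\in\R^n$ (an equivalent restatement of the condition in the lemma). Given an arbitrary $z\in\R^n$, applying this hypothesis to $w=z-x$ together with the inequality just derived gives $\langle z-x,y\rangle\le u'(x;z-x)\le u(z)-u(x)$, that is, $u(z)\ge u(x)+\langle z-x,y\rangle$. As $z$ was arbitrary, $y\in\partial u(x)$.

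The argument is short, and I do not anticipate a genuine obstacle; the only point requiring a little care is that $u(x)$ is finite --- which holds precisely because $x\in\dom(u)$ --- so that the difference quotients and the inequality $u'(x;w)\le u(x+w)-u(x)$ remain meaningful even when $u(x+w)=+\infty$.
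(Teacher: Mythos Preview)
Your proof is correct. The paper does not actually supply its own proof of this lemma; it merely cites \cite[Theorem 23.2]{Rockafellar}. Your argument is precisely the standard elementary proof of that theorem: the forward implication follows at once from the subgradient inequality evaluated at $x+tw$, and the reverse implication uses that convexity forces the difference quotient $t\mapsto\frac{u(x+tw)-u(x)}{t}$ to be nondecreasing, so that $u'(x;w)\le u(x+w)-u(x)$. Your remark that finiteness of $u(x)$ (guaranteed by $x\in\dom(u)$) is all that is needed to make the inequalities meaningful, even when $u(x+w)=+\infty$, is exactly the right caveat. There is nothing to add.
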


We also need the following result, which can be found in \cite[Theorem 23.5]{Rockafellar}.

\begin{lemma}
\label{le:conjugate_subgradient}
For $u\in\fconvx$ and $x,y\in\rn$, the following are equivalent:
	\begin{enumerate}[(i)]
		\item $y \in \partial u(x),$
		\item $x \in \partial u^*(y),$
		\item $\langle x,y\rangle = u(x)+ u^*(y),$
		\item $x\in \argmax_{z\in\rn} \left(\langle y,z\rangle-u(z)\right),$
		\item $y\in \argmax_{z\in\rn} \left(\langle x,z\rangle-u^*(z) \right).$
	\end{enumerate}
\end{lemma}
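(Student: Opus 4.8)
The plan is to single out statement (iii), the Fenchel--Young equality, as a hub and first establish the chain (i) $\Leftrightarrow$ (iii) $\Leftrightarrow$ (iv); then, since (iii) is symmetric under interchanging $(u,x)$ with $(u^*,y)$, I would apply these equivalences to the conjugate function $u^*$ and use $u^{**}=u$ (Proposition~\ref{Schneider proposition}) to obtain (ii) and (v) for free.

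First I would record the elementary inequality $u^*(y)\ge \langle x,y\rangle-u(x)$, valid for all $x,y\in\rn$ straight from the definition of $u^*$ as a supremum. For (i) $\Rightarrow$ (iii): if $y\in\partial u(x)$ then $x\in\dom(u)$ and $u(z)\ge u(x)+\langle z-x,y\rangle$ for every $z$, which rearranges to $\langle z,y\rangle-u(z)\le \langle x,y\rangle-u(x)$; taking the supremum over $z\in\rn$ gives $u^*(y)\le \langle x,y\rangle-u(x)$, and combined with the reverse inequality this is exactly the equality in (iii). Conversely, (iii) $\Rightarrow$ (i): from $u^*(y)=\langle x,y\rangle-u(x)$ (which forces $u(x)\in\R$, since functions in $\fconvx$ never take the value $-\infty$), the inequality $u^*(y)\ge\langle z,y\rangle-u(z)$ gives $u(z)\ge u(x)+\langle z-x,y\rangle$ for all $z$, i.e. $y\in\partial u(x)$. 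The equivalence of (iii) and (iv) is then immediate from the definition of $u^*$: the point $x$ maximizes $z\mapsto\langle y,z\rangle-u(z)$ precisely when $\langle y,x\rangle-u(x)$ equals this supremum, that is, equals $u^*(y)$ — and again in that case $u(x)$ must be finite. (If $x\notin\dom(u)$, all four statements fail: $\partial u(x)=\emptyset$, the equality in (iii) would read $\langle x,y\rangle=+\infty$, and the supremum in (iv) cannot be attained at $x$ since $u$ is proper; so the equivalences hold trivially there as well.)

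With (i) $\Leftrightarrow$ (iii) $\Leftrightarrow$ (iv) established, I would observe that statement (iii) is literally unchanged when one replaces $u$ by $u^*$ and $u^*$ by $u^{**}=u$ (Proposition~\ref{Schneider proposition}) and swaps the roles of $x$ and $y$: it becomes $\langle y,x\rangle=u^*(y)+u^{**}(x)$. Applying the already proved equivalence ``(i) $\Leftrightarrow$ (iii)'' to the function $u^*$ then yields (iii) $\Leftrightarrow$ $(x\in\partial u^*(y))$, which is (ii); applying ``(iii) $\Leftrightarrow$ (iv)'' to $u^*$ yields (iii) $\Leftrightarrow$ $\big(y\in\argmax_{z\in\rn}(\langle x,z\rangle-u^*(z))\big)$, which is (v). Routing all five statements through (iii) completes the proof.

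I do not anticipate a genuine obstacle: this is essentially classical Fenchel--Young duality. The only points that require a little attention are keeping track of the extended-real-valued conventions so that the equality in (iii) is never vacuously an identity between infinities, and making clean use of the involution $u\mapsto u^*$ (via $u^{**}=u$) so that the three ``starred'' statements are deduced rather than re-derived by hand.
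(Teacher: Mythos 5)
Your proof is correct. The paper itself does not prove this lemma at all --- it simply cites \cite[Theorem 23.5]{Rockafellar} --- so there is no in-paper argument to compare against; what you have written is essentially the classical proof of that theorem. Your organisation (taking the Fenchel--Young equality (iii) as the hub, proving (i) $\Leftrightarrow$ (iii) $\Leftrightarrow$ (iv) directly from the definitions, and then deducing (ii) and (v) by applying the same equivalences to $u^*$ via the involution $u^{**}=u$ from Proposition~\ref{Schneider proposition}) is clean and exploits exactly the symmetry that makes the five-fold equivalence hold. You also handle the extended-real-valued bookkeeping correctly: the observation that $u(x)=+\infty$ forces all five statements to fail simultaneously (since $u$ is proper and $u^*$ never takes the value $-\infty$) is the one point where such a proof could go wrong, and you address it explicitly. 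Nothing is missing.
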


\noindent Here $\argmax\nolimits_{z\in V} v(z)$ denotes the set of points in the set $V$ at which the function values of $v$ are maximized on $V$.

\section{Lipschitz Regularization}\label{Approximation}

For a function $u\in\fconvx$ and $r>0$, we consider its \emph{Pasch-Hausdorff envelope} or \emph{Lipschitz regularization}
$$\reg{u}{r} =(u^* + I_{\Bn_{1/r}})^*$$
(see \cite{HU1,HU2} and  \cite[Example 9.11]{Rockafellar-Wets}). 

\goodbreak

\begin{remark} 
Given $u\in\fconvx$ and $r>0$, the Lipschitz regularization $\reg{u}{r}$ admits the following equivalent definition
$$
\reg{u}{r}(x)=\sup\{w(x)\colon\mbox{$w\in\faff$, $w\le u$ in $\R^n$, $|\nabla w|\le 1/r$}\}.
$$
In other words, the graph of $\reg{u}{r}$ is the envelope of all supporting hyperplanes to the graph of $u$, with slope 
bounded by $1/r$. This can be also rephrased as follows: $\reg{u}{r}$ is the largest convex function which is smaller than $u$
and has Lipschitz constant bounded by $1/r$ (for sufficiently large $r$). For brevity, we omit the proof of the equivalence of these definitions.

%\begin{proof} There exists $(\bar x,\bar y)\in\R^{2n}$ such that $\bar x\in\dom(u)$ and $\bar y\in\partial u(\bar x)$. Let $k\ge |\bar y|$. 
%Note that $u^*(\bar y)=(\bar x,\bar y)-u(\bar x)$ (\cite[Theorem 23.5]{Rockafellar}), so that $\bar y\in\dom(u^*)$ which yields $\bar y\in\dom(u^*+I_{\Bn_k})$; in particular $(u^*+I_{\Bn_k})\in\fconvx$. Let 
%$$
%v_k=\left(u^*+I_{\Bn_k}\right)^*.
%$$
%Then
%$$
%v_k(x)=\sup_{|y|\le k} (x,y)-u^*(y),\quad\forall\ x\in\R^n.
%$$
%As 
%$$
%u(x)+u^*(y)\ge (x,y),\quad\forall\ x,y\in\R^n,
%$$
%we get
%$$
%v_k(x)\le u_k(x),\quad\forall\ x\in\R^n.
%$$
%To prove the reverse inequality, let $x\in\R^n$ and $\hat x,\hat y$ be determined as in item (7) of Proposition \ref{approximation lemma}. Note 
%that $u(\hat x)+u^*(\hat y)=(\hat x,\hat y)$, so that
%$$
%u_k(x)=(x,\hat y)-u^*(\hat y).
%$$
%As $|\hat y|\le k$, we obtain $u_k(x)\le v_k(x)$.
%\end{proof}

\end{remark}

\begin{proposition}\label{approximation lemma} For $u\in\fconvx$ and $r>0$, the Lipschitz regularization has the following
properties.
\begin{enumerate}[(i)]
\item For $r>0$, the function $\reg{u}{r}$ is convex in $\rn$. There exists $r_0>0$ such that $\reg{u}{r}(x)>-\infty$ for every $0<r\le r_0$ and $x\in\rn$. \label{eins}
\item For $r>0$, we have $\reg{u}{r}<+\infty$ on $\R^n$.
\item For $0<r<t$, we have  $\reg{u}{t}(x)\le \reg{u}{r}(x)\le u(x)$  for every $x\in\R^n$.
\item As $r\to 0$, the functions  $\reg{u}{r}$ epi-converge to $u$.
\item For every $x\in\rn$, we have
$\lim_{r\to0} \reg{u}{r}(x)=u(x)$.
\item If $x\in\dom(u)$ and $y\in\partial u(x)$, then $\reg{u}{r}(x)=u(x)$ and $\partial \reg{u}{r}(x)=\partial u(x)\cap \Bn_{1/r}$ for $|y|<\frac1{r}$. \label{sechs}

\end{enumerate}
\end{proposition}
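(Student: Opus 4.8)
The plan is to deduce every assertion from elementary properties of the Legendre--Fenchel conjugate. Throughout I would abbreviate $v=u^{*}$ and, for $r>0$, write $g_r=v+I_{\Bn_{1/r}}$, so that $\reg{u}{r}=g_r^{*}$ by definition. Since a conjugate is always convex and lower semicontinuous, the convexity in (i) is immediate. The rest of (i) follows because $g_r^{*}(x)>-\infty$ for all $x$ exactly when $\dom(g_r)=\dom(v)\cap\Bn_{1/r}\neq\emptyset$: by Proposition~\ref{Schneider proposition} we have $\dom(v)\neq\emptyset$, so fixing $y_0\in\dom(v)$ and $r_0>0$ with $|y_0|\le 1/r_0$ gives $y_0\in\Bn_{1/r}$ for all $r\le r_0$. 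For (ii), from $\reg{u}{r}(x)=\sup_{y\in\Bn_{1/r}}(\langle x,y\rangle-v(y))\le |x|/r-\inf_{\Bn_{1/r}}v$ and the fact that $v$, being proper and lower semicontinuous by Proposition~\ref{Schneider proposition}, is bounded below on the compact ball $\Bn_{1/r}$ (the supremum being $-\infty$ if $\dom(v)\cap\Bn_{1/r}=\emptyset$), one gets $\reg{u}{r}(x)<+\infty$. For (iii), if $0<r<t$ then $\Bn_{1/t}\subset\Bn_{1/r}$, hence $v\le g_r\le g_t$, and since conjugation reverses order, $\reg{u}{t}=g_t^{*}\le g_r^{*}=\reg{u}{r}\le v^{*}=u^{**}=u$, the last equality again by Proposition~\ref{Schneider proposition}.

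For (v), interchanging the two suprema in the definition of the conjugate gives $\sup_{r>0}\reg{u}{r}=\sup_{r>0}g_r^{*}=(\inf_{r>0}g_r)^{*}$; since $\inf_{r>0}g_r=v$ pointwise (every $y\in\R^n$ lies in $\Bn_{1/r}$ for all small $r$), this equals $v^{*}=u^{**}=u$, and by (iii) this supremum is the limit as $r\to0$, proving (v). For (iv), note that for $r\le r_0$ the function $g_r$ lies in $\fconvx$, so $(\reg{u}{r})^{*}=g_r^{**}=g_r=v+I_{\Bn_{1/r}}$ by Proposition~\ref{Schneider proposition}; for any sequence $r_k\to0$ the functions $g_{r_k}$ epi-converge to $v$, since $g_{r_k}\ge v$ with $v$ lower semicontinuous gives the liminf condition, and the constant sequence $x_k\equiv x$ is a recovery sequence because $g_{r_k}(x)=v(x)$ once $|x|\le 1/r_k$. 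By Proposition~\ref{convergence conjugates}, $\reg{u}{r_k}=g_{r_k}^{*}$ epi-converges to $v^{*}=u$, which is (iv). (Alternatively, (iv) follows from (iii) and (v): the constant sequence is a recovery sequence by (v), and for $r_k\le\rho\le r_0$ one has $\reg{u}{r_k}(x_k)\ge\reg{u}{\rho}(x_k)$ by (iii); as $\reg{u}{\rho}$ is finite, hence continuous, $\liminf_k\reg{u}{r_k}(x_k)\ge\reg{u}{\rho}(x)\to u(x)$ as $\rho\to0$.)

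For (vi), write $g=g_r$ and use Lemma~\ref{le:conjugate_subgradient} repeatedly. From $y\in\partial u(x)$ we get $x\in\partial v(y)$ and $\langle x,y\rangle=u(x)+v(y)$; in particular $y\in\dom(v)$, so together with $|y|<1/r$ (whence $y\in\Bn_{1/r}$) the function $g$ is proper. The key elementary observation is: \emph{if $w\in\Bn_{1/r}$ and $x\in\partial v(w)$, then $x\in\partial g(w)$}, because the subgradient inequality for $g$ at $w$ holds trivially off $\Bn_{1/r}$ and on $\Bn_{1/r}$ reduces to the one for $v$, since $g=v$ there. Applying this with $w=y$ and then Lemma~\ref{le:conjugate_subgradient} to $g$ yields $\langle x,y\rangle=g(y)+g^{*}(x)=v(y)+\reg{u}{r}(x)$, hence $\reg{u}{r}(x)=\langle x,y\rangle-v(y)=u(x)$. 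For the subdifferential identity: if $w\in\partial u(x)\cap\Bn_{1/r}$, then $x\in\partial v(w)$, the observation gives $x\in\partial g(w)$, and Lemma~\ref{le:conjugate_subgradient} gives $w\in\partial g^{*}(x)=\partial\reg{u}{r}(x)$; conversely, if $w\in\partial\reg{u}{r}(x)=\partial g^{*}(x)$, then by Lemma~\ref{le:conjugate_subgradient} $\langle x,w\rangle=g(w)+g^{*}(x)$, and since $g^{*}(x)=u(x)$ is finite this forces $w\in\dom(g)\subset\Bn_{1/r}$ and $g(w)=v(w)$, so $\langle x,w\rangle=v(w)+u(x)$, i.e.\ $w\in\partial u(x)$ by Lemma~\ref{le:conjugate_subgradient}.

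The only genuinely delicate point is (iv): epi-convergence is a two-sided notion, so one must both exhibit recovery sequences (here the constant sequence, thanks to (v), or to the fact that $g_{r_k}$ eventually agrees with $v$ at any fixed point) and establish the liminf inequality. I expect this to be the main obstacle, but it dissolves once one passes to conjugates via Propositions~\ref{Schneider proposition} and~\ref{convergence conjugates} and observes that $g_{r_k}\to v$ not merely pointwise but in the strong sense that $g_{r_k}$ coincides with $v$ on larger and larger balls. Everything else is a short consequence of the order-reversing and biconjugation properties of $(\,\cdot\,)^{*}$ together with the equivalences in Lemma~\ref{le:conjugate_subgradient}.
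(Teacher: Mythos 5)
Your proof is correct and follows essentially the same conjugate-based route as the paper: convexity, properness and monotonicity from order-reversal and biconjugation, (iv) via Proposition~\ref{convergence conjugates}, and (vi) via Lemma~\ref{le:conjugate_subgradient}. The only (harmless) deviations are that you prove (v) directly by interchanging suprema rather than deducing it from (iv) as the paper does, and in (vi) you argue through subgradient inequalities instead of the argmax characterization; both variants are valid and, if anything, slightly more explicit than the paper's.
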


\begin{proof}
(i) Since $\reg{u}{r}$ is defined as the conjugate of a convex function, it is convex itself. Furthermore, since $u^*\not\equiv+\infty$, there exists $r_0>0$ such that $\dom (u^*)\cap \Bn_{1/r} \neq \emptyset$ for every $0<r\le r_0$ and therefore $u^*+I_{\Bn_{1/s}}\in\fconvx$. Hence, Proposition \ref{Schneider proposition} implies that $\reg{u}{r}\in\fconvx$ for every $0<\le r_0$.

\noindent
(ii) It follows from the definition of the convex conjugate, that
$$\reg{u}{r}(x) = \sup_{y\in\rn} \big(\langle x,y\rangle-(u^*(y)+I_{\Bn_{1/r}}(y))\big) = \sup_{y\in \Bn_{1/r}}\big(\langle x,y \rangle -u^*(y)\big) < +\infty.$$

\noindent
(iii) This follows from the fact that convex conjugation is order reversing, that is, if $u\leq v$ pointwise, then $u^* \geq v^*$ pointwise.

\noindent
(iv) By Proposition \ref{convergence conjugates}, it is enough to show that $u^*+I_{\Bn_{1/r}}$ is epi-convergent to $u^*$, which follows from the properties of epi-convergence (see also \cite[Theorem 7.46]{Rockafellar-Wets}).

\noindent
(v) By (iii), the function $\reg{u}{r}(x)$ is bounded above by $u(x)$ and decreasing in $r$. Hence, $\lim_{r\to 0} \reg{u}{r}(x)$ exists in $(-\infty,+\infty]$. Together with (iv) and the definition of epi-convergence, this gives
$$u(x) \leq \lim_{s\to 0} \reg{u}{r}(x) \leq u(x).$$

\noindent
(vi) For $x\in\dom(u)$, we have by Lemma \ref{le:conjugate_subgradient}
\begin{align*}
	y \in \partial u(x),\quad |y|\leq \frac1s \quad
	&\Longleftrightarrow \quad y\in \operatorname{argmax}_{z\in \Bn_{1/r}} \left( \langle x, z\rangle - u^*(z) \right)\\
	&\Longleftrightarrow \quad y\in \operatorname{argmax}_{z\in\rn} \big( \langle x, z\rangle - (u^*(z)+I_{\Bn_{1/r}}(z)) \big)\\
	&\Longleftrightarrow \quad y\in \operatorname{argmax}_{z\in\rn} \left( \langle x, z\rangle - (\reg{u}{r})^* (z) \right)\\
	&\Longleftrightarrow \quad y \in\partial \reg{u}{r}(x).
\end{align*}
Hence, for $y\in\partial u(x)$ with $\vert y\vert <\frac1r$,
$$u(x) = \sup\nolimits_{z\in \Bn_{1/r}} \big( \langle x,z\rangle  - u^*(z) \big) = \sup\nolimits_{z\in\rn} \big(\langle x,z\rangle - u^*(z)- I_{\Bn_{1/r}}(z)\big) = \reg{u}{r}(x),$$
which concludes the proof.
\end{proof}

%\begin{remark} By the properties of \emph{infimal convolution} (see, for example,  \cite[Section 1.6.2]{Schneider}), it is easy to see, that for $0<s\leq s_0$, where $s_0>0$ is chosen as above, $$\epi (\reg{u}{s}) = \epi(u) + \epi(\tfrac1s n),$$ where $n(x)=|x|$ is the norm of $x\in\rn$. This yields a similarity to the so-called Moreau-Yosida envelope, which can be defined as $$\epi(e_\lambda u)= \epi(u)+\epi(\tfrac{1}{\lambda} q),$$ for $\lambda>0$, where $q(x)=\tfrac 12 |x|^2$. Furthermore, using the representation above, it is not hard to see that $$\reg{u}{s}(x) = \sup\{w(x)\colon\mbox{$w\in\faff$, $w\le u$ in $\rn$, $|\nabla w|\le \tfrac1s$}\},$$ for every $0<s\leq s_0$ and $x\in\rn$. \end{remark}

\begin{proposition}\label{approximation lemma 2} Let $u_j, u\in\fconvx$. If $u_j$ epi-converges to $u$, then
$\reg{u_j}{r}$ epi-converges to $\reg{u}{r}$ as $j\to+\infty$ for sufficiently small $r>0$.
\end{proposition}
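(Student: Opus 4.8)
The plan is to pass through convex conjugates, exploiting Proposition~\ref{convergence conjugates} together with the identity $\reg{u}{r}=(u^*+I_{\Bn_{1/r}})^*$. By that proposition, it suffices to show that if $u_j$ epi-converges to $u$, then $u_j^*$ epi-converges to $u^*$ (which is precisely Proposition~\ref{convergence conjugates}), and then that $u_j^*+I_{\Bn_{1/r}}$ epi-converges to $u^*+I_{\Bn_{1/r}}$. Applying Proposition~\ref{convergence conjugates} once more to the latter convergence yields that $(u_j^*+I_{\Bn_{1/r}})^* = \reg{u_j}{r}$ epi-converges to $(u^*+I_{\Bn_{1/r}})^* = \reg{u}{r}$, which is the claim. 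One caveat to address at the outset: for the conjugates to stay in $\fconvx$ (so that Proposition~\ref{convergence conjugates} is applicable) we need $u_j^*+I_{\Bn_{1/r}}\not\equiv+\infty$, i.e. $\dom(u_j^*)\cap\Bn_{1/r}\neq\emptyset$; this is where the restriction to ``sufficiently small $r$'' enters, and I would justify it using the part of the argument below that controls $\dom(u_j^*)$ uniformly in $j$.

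The heart of the matter is therefore the stability of epi-convergence under adding the indicator $I_{\Bn_{1/r}}$, which is an instance of epi-convergence of sums. In general epi-convergence is not preserved under sums, so one must use structure here. I would verify the two defining conditions directly. For the $\liminf$ inequality (condition (i)): given $y_j\to y$, if $y\notin\Bn_{1/r}$ there is nothing to prove since the right-hand side can be taken $+\infty$ (eventually $y_j\notin\Bn_{1/r}$ as $\Bn_{1/r}$ is closed... more care is needed on the boundary, but $(u^*+I_{\Bn_{1/r}})(y)=+\infty$ there forces nothing); if $y\in\Bn_{1/r}$, then $I_{\Bn_{1/r}}(y_j)\ge 0 = I_{\Bn_{1/r}}(y)$ and the inequality follows from that for $u_j^*\to u^*$. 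For the recovery sequence (condition (ii)): here the delicate point is that the recovery sequence $y_j\to y$ produced by epi-convergence of $u_j^*$ need not lie in $\Bn_{1/r}$. To fix this I would instead first pick, for $y\in\relint(\dom(u^*))\cap\interno(\Bn_{1/r})$, the recovery sequence for $u_j^*$; it converges to $y$, hence eventually lies in $\interno(\Bn_{1/r})$, so $I_{\Bn_{1/r}}$ vanishes along it and we are done for such $y$. For $y$ on the boundary of the relevant region, approximate by interior points and use lower semicontinuity together with a diagonal argument, or appeal to the general sum rule \cite[Theorem 7.46]{Rockafellar-Wets} as was done in the proof of Proposition~\ref{approximation lemma}(iv), whose hypotheses (one summand continuous, or a constraint-qualification-type condition on the relative interiors of the domains, namely $\relint(\dom(u^*))\cap\interno(\Bn_{1/r})\neq\emptyset$) are met once $r$ is small enough that $\dom(u^*)$ meets $\interno(\Bn_{1/r})$.

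The main obstacle I anticipate is the uniformity needed to choose a single $r>0$ working for the whole sequence: epi-convergence $u_j^*\to u^*$ does not a priori prevent $\dom(u_j^*)$ from drifting away from the origin. I would resolve this by recalling that by Remark~\ref{non-empty subgradient} and Proposition~\ref{Schneider proposition} applied to $u$, there is a point $x_0$ with $\partial u(x_0)\ne\emptyset$, equivalently (Lemma~\ref{le:conjugate_subgradient}) a point $y_0\in\dom(u^*)$; and since $u_j$ epi-converges to $u$, epigraph convergence forces $\dom(u_j^*)$ to eventually intersect any fixed neighbourhood of $y_0$ — concretely, the recovery sequence for $u^*$ at $y_0$ (or at an interior point of $\dom(u^*)$) lies in $\dom(u_j^*)$ for large $j$ and converges to $y_0$. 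Fixing $r>0$ small enough that $\vert y_0\vert<1/r$ then guarantees $\dom(u_j^*)\cap\interno(\Bn_{1/r})\ne\emptyset$ for all large $j$, which both keeps $\reg{u_j}{r}$ well defined in $\fconvx$ and supplies the constraint qualification for the sum rule. Once this uniform choice of $r$ is in place, the two epi-convergence conditions follow as sketched, and two applications of Proposition~\ref{convergence conjugates} close the proof.
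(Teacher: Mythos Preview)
Your approach is essentially the paper's: reduce via Proposition~\ref{convergence conjugates} to showing that $u_j^*+I_{\Bn_{1/r}}$ epi-converges to $u^*+I_{\Bn_{1/r}}$, then verify the two defining conditions by hand, with the only delicate case being the recovery sequence at points on $\bd(\Bn_{1/r})$. The paper handles that boundary case by an explicit convex-combination trick---fixing an anchor point $\bar x_j$ well inside the ball with $v_j(\bar x_j)$ controlled, and replacing the recovery sequence $x_j$ by the point $\hat x_j$ where the segment $[\bar x_j,x_j]$ meets $\bd(\Bn_s)$---whereas you sketch a diagonal argument through interior approximants (or defer to a sum rule); both routes work and rest on the same idea of exploiting an interior point of $\dom(u^*)\cap\Bn_{1/r}$ together with convexity. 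Your discussion of why a single $r$ works uniformly in $j$ is a bit more explicit than the paper's, which simply fixes $s$ so that $\dom(v)\cap\Bn_{s/4}\ne\emptyset$ and uses epi-convergence to produce the anchors $\bar x_j$. One cosmetic point: your parenthetical about the boundary in the $\liminf$ step is muddled (on $\bd(\Bn_{1/r})$ the indicator is $0$, not $+\infty$), but since that case is subsumed by ``$y\in\Bn_{1/r}$'' it causes no harm.
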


\begin{proof} By Proposition \ref{convergence conjugates}, it is enough to prove that given a sequence 
$v_j\in\fconvx$ that is epi-convergent to $v\in\fconvx$, then, for $s>0$ sufficiently large, the sequence $v_j+I_{\Bn_s}$ epi-converges 
to $v+I_{\Bn_s}$. 

Let us fix $s>0$ such that there exists $\bar x\in\dom(v)\cap \Bn_{s/4}$. By epi-convergence, we may assume that for every $j$ there exists $\bar x_j\in \Bn_{s/2}$ 
such that $v_j(\bar x_j)\le 2 v(\bar x)$. Let $x\in\R^n$ and $x_j$ be a sequence converging to $x$. The inequality
$$
\liminf_{j\to+\infty}(v_j+I_{\Bn_s})(x_j)\ge (v+I_{\Bn_s})(x)
$$
follows easily from the epi-convergence of the sequence $v_j$ to $v$. To conclude, we have to show that there exists a specific sequence 
$x_j$ converging to $x$ and such that 
\begin{equation}\label{equality}
\lim_{j\to+\infty}(v_j+I_{\Bn_s})(x_j)=(v+I_{\Bn_s})(x).
\end{equation}
If either $|x|<s$ or $|x|> s$, this is again a straightforward consequence of epi-convergence. So assume that $|x|=s$ and let 
$x_j$ be a sequence such that
$$
\lim_{j\to+\infty}v_j(x_j)=v(x).
$$
If all but a finite number of the elements of $x_j$ belong to $\Bn_s$, then \eqref{equality} follows. Hence we may assume that
$|x_j|>s$ for every $j$. Let $\hat x_j$ be the intersection of the segment joining $x_j$ and $\bar x_j$ with $\bd(\Bn_s)$. Then
$$
\hat x_j=(1-t_j)\bar x_j+t_j x_j
$$
for a suitable $t_j\in[0,1]$. As $|x|=|x_j|=s$ and $|\bar x_j|\le {s/2}$ for every $j$, up to extracting a subsequence, 
we may assume that $t_j\to1$ as $j\to+\infty$, so that, in particular
$$
\lim_{j\to+\infty}\hat x_j=x.
$$
Hence
$$
\liminf_{j\to+\infty}(v_j+I_{\Bn_s})(\hat x_j)
=\liminf_{j\to+\infty} v_j(\hat x_j)\ge v(x)=(v+I_{\Bn_s})(x).
$$
On the other hand, by convexity
$$
(v_j+I_{\Bn_r})(\hat x_j)=v_j(\hat x_j)\le(1-t_j)v_j(\bar x_j)+t_j v_j(x_j)
$$
for $j\in\N$. Passing to the limit as $j\to+\infty$ (and recalling that the sequence $v_j(\bar x_j)$ is bounded from above), we obtain
$$
\limsup_{j\to+\infty}(v_j+I_{\Bn_s})(\hat x_j)\le v(x)=(v+I_{\Bn_s})(x)
$$
which concludes the proof.
\end{proof}

\begin{proposition}\label{lemma 3} Let  $u,v\in\fconvx$. If $u\wedge v\in\fconvx$, then
$$
\reg{(u\wedge v)}{r}=\reg{u}{r}\wedge \reg{v}{r},\quad
\reg{(u\vee v)}{r}=\reg{u}{r}\vee \reg{v}{r}
$$
for sufficiently small $r>0$.
\end{proposition}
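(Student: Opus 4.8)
The plan is to reduce the identities for $\reg{(u\wedge v)}{r}$ and $\reg{(u\vee v)}{r}$ to the corresponding identities for conjugates via Proposition~\ref{minmaxconjugates}, and then to use the fact that adding the indicator $I_{\Bn_{1/r}}$ commutes with $\vee$ and $\wedge$. Recall that by definition $\reg{w}{r}=(w^*+I_{\Bn_{1/r}})^*$. Fix $r>0$ small enough that all four functions $\reg{u}{r}$, $\reg{v}{r}$, $\reg{(u\wedge v)}{r}$, $\reg{(u\vee v)}{r}$ are finite-valued (this is possible by Proposition~\ref{approximation lemma}(i)--(ii), since each of $u,v,u\wedge v,u\vee v$ lies in $\fconvx$ -- for $u\vee v$ this uses the observation in Section~\ref{Convex functions} that $u\wedge v\in\fconvx$ forces $u\vee v\in\fconvx$ -- and finiteness of the regularization holds for all $r>0$).

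For the first identity, I would start from $(u\wedge v)^*=u^*\vee v^*$ (Proposition~\ref{minmaxconjugates}). Adding $I_{\Bn_{1/r}}$ to both sides and using the elementary pointwise identity $(f\vee g)+I_A=(f+I_A)\vee(g+I_A)$ gives
$$
(u\wedge v)^*+I_{\Bn_{1/r}}=(u^*+I_{\Bn_{1/r}})\vee(v^*+I_{\Bn_{1/r}}).
$$
Now take conjugates of both sides. The left-hand side becomes $\reg{(u\wedge v)}{r}$ by definition. For the right-hand side, I want to apply the relation $(f\vee g)^*=f^*\wedge g^*$; but Proposition~\ref{minmaxconjugates} states this only under the hypothesis that $f\wedge g\in\fconvx$ -- so the key check is that $(u^*+I_{\Bn_{1/r}})\wedge(v^*+I_{\Bn_{1/r}})\in\fconvx$, equivalently that it is convex, lower semicontinuous, not identically $+\infty$, and nowhere $-\infty$. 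Lower semicontinuity is automatic (minimum of two closed functions is closed, since $I_{\Bn_{1/r}}$ is closed and $u^*,v^*$ are closed); it is not identically $+\infty$ for $r$ small because $\dom(u^*)\cap\Bn_{1/r}$ and $\dom(v^*)\cap\Bn_{1/r}$ are nonempty for small $r$; and it is $>-\infty$ everywhere since $u^*,v^*>-\infty$. Convexity of this minimum is precisely what Proposition~\ref{minmaxconjugates} (applied to $u,v$, whose conjugates satisfy $u^*\wedge v^*$ convex) plus the observation that $(u^*\wedge v^*)+I_{\Bn_{1/r}}=(u^*+I_{\Bn_{1/r}})\wedge(v^*+I_{\Bn_{1/r}})$ delivers: the restriction of a convex function to the convex set $\Bn_{1/r}$ is convex. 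Hence Proposition~\ref{minmaxconjugates} applies and conjugating yields
$$
\reg{(u\wedge v)}{r}=(u^*+I_{\Bn_{1/r}})^*\wedge(v^*+I_{\Bn_{1/r}})^*=\reg{u}{r}\wedge\reg{v}{r}.
$$

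For the second identity I would argue symmetrically, starting from $(u\vee v)^*=u^*\wedge v^*$, adding $I_{\Bn_{1/r}}$, using $(f\wedge g)+I_A=(f+I_A)\wedge(g+I_A)$, and then conjugating with the relation $(f\wedge g)^*=f^*\vee g^*$ (which, by the proof of Proposition~\ref{minmaxconjugates}, holds as soon as $f\wedge g\in\fconvx$, a condition just verified). Alternatively, once $\reg{(u\wedge v)}{r}=\reg{u}{r}\wedge\reg{v}{r}$ is known, one can note that $\reg{u}{r}\wedge\reg{v}{r}\in\fconvx$ for small $r$ and apply the first identity with $u,v$ replaced by the pair $\reg{u}{r}$, $\reg{v}{r}$ after a double-regularization argument -- but the direct route via conjugates is cleaner. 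I expect the main obstacle to be the bookkeeping of which functions lie in $\fconvx$ at each step: one must be careful that every time Proposition~\ref{minmaxconjugates} or Proposition~\ref{Schneider proposition} is invoked, the relevant function is genuinely in $\fconvx$ (finite moment of care is needed only for the "not $\equiv+\infty$" clause, which is where the smallness of $r$ enters, and this is exactly the restriction that forces the phrase "for sufficiently small $r>0$" in the statement).
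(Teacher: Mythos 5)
Your proposal is correct and follows essentially the same route as the paper's proof: both arguments pass to conjugates via Proposition~\ref{minmaxconjugates}, use the elementary commutation of $+I_{\Bn_{1/r}}$ with $\vee$ and $\wedge$, and hinge on verifying that $(u^*+I_{\Bn_{1/r}})\wedge(v^*+I_{\Bn_{1/r}})=(u^*\wedge v^*)+I_{\Bn_{1/r}}$ is convex (hence in $\fconvx$ for small $r$) so that the conjugation identities may be applied. Your version is in fact slightly more explicit than the paper's about checking the lower semicontinuity and properness hypotheses; no gaps.
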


\begin{proof} By Proposition \ref{minmaxconjugates}, the function $u^*\wedge v^*$ is convex. Hence,
$$
(\reg{u}{r})^*\wedge (\reg{v}{r})^*=(u^*+I_{\Bn_{1/r}})\wedge(v^*+I_{\Bn_{1/r}})=
(u^*\wedge v^*)+I_{\Bn_{1/r}}
$$
is convex as well, and this implies that $\reg{u}{r}\wedge \reg{v}{r}$ is convex. Consequently
\begin{eqnarray*}
\reg{(u\wedge v)}{r}&=&((u\wedge v)^*+I_{\Bn_{1/r}})^*\\
&=&(u^*\vee v^*+I_{\Bn_{1/r}})^*\\
&=&((u^*+I_{\Bn_{1/r}})\vee(v^*+I_{\Bn_{1/r}}))^*\\
&=&((\reg{u}{r})^*\vee (\reg{v}{r})^*)^*\\
&=&\reg{u}{r}\wedge \reg{u}{r}.
\end{eqnarray*}
The second equation is proved analogously.
\end{proof}

\section{Inclusion-exclusion Properties of the Subgradient Map}\label{Inclusion-exclusion properties of the subgradient map}

In this section we will prove the following result.

\begin{theorem}\label{IE subgradient point} Let $u,v\in\fconvx$. If  $u\wedge v\in\fconvx$, then 
\begin{eqnarray*}
\partial(u\vee v)(x)\cup\partial(u\wedge v)(x)&=&\partial u(x)\cup\partial v(x),\label{alpha point}\\
\partial(u\vee v)(x)\cap\partial(u\wedge v)(x)&=&\partial u(x)\cap\partial v(x)\label{beta point}
\end{eqnarray*}
for every $x\in\R^n$.
\end{theorem}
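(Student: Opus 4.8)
The plan is to fix $x\in\R^n$ and prove the two set equalities separately, working throughout with the conjugate description of subgradients. Write $a=u(x)$, $b=v(x)$, $a^*=u^*(y)$, $b^*=v^*(y)$. By Lemma~\ref{le:conjugate_subgradient}, for $w\in\fconvx$ one has $y\in\partial w(x)$ exactly when $w(x)+w^*(y)=\langle x,y\rangle$, while always $w(x)+w^*(y)\ge\langle x,y\rangle$ (Fenchel--Young, immediate from the definition of $w^*$); and by Proposition~\ref{minmaxconjugates}, $(u\wedge v)^*(y)=\max(a^*,b^*)$, $(u\vee v)^*(y)=\min(a^*,b^*)$, $(u\wedge v)(x)=\min(a,b)$, $(u\vee v)(x)=\max(a,b)$. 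Three of the four required inclusions drop out of these formulas combined with Fenchel--Young: if, say, $y\in\partial(u\wedge v)(x)$ and $a\le b$, then $a+a^*\le\min(a,b)+\max(a^*,b^*)=\langle x,y\rangle$ forces $a+a^*=\langle x,y\rangle$, i.e.\ $y\in\partial u(x)$; the inclusion $\partial(u\vee v)(x)\subseteq\partial u(x)\cup\partial v(x)$ is analogous (split on $a^*\le b^*$ or $b^*\le a^*$); and if $y$ lies in both $\partial(u\wedge v)(x)$ and $\partial(u\vee v)(x)$, then adding the two defining equations and using $\min(s,t)+\max(s,t)=s+t$ gives $(a+a^*)+(b+b^*)=2\langle x,y\rangle$, so each summand equals $\langle x,y\rangle$ and $y\in\partial u(x)\cap\partial v(x)$. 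For the reverse inclusion in the intersection identity, $y\in\partial u(x)\cap\partial v(x)$ gives $a-b=b^*-a^*$, so (taking $a\le b$) $b^*\le a^*$, whence $\min(a,b)+\max(a^*,b^*)=a+a^*=\langle x,y\rangle=b+b^*=\max(a,b)+\min(a^*,b^*)$, placing $y$ in both $\partial(u\wedge v)(x)$ and $\partial(u\vee v)(x)$. (Finiteness of $a,b,a^*,b^*$ is checked along the way using that the pertinent subdifferential is nonempty.) This settles the intersection identity and the inclusion $\subseteq$ in the union identity.

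The one substantial point is the remaining inclusion $\partial u(x)\cup\partial v(x)\subseteq\partial(u\vee v)(x)\cup\partial(u\wedge v)(x)$; by symmetry it suffices to show $\partial u(x)\subseteq\partial(u\vee v)(x)\cup\partial(u\wedge v)(x)$. Fix $y\in\partial u(x)$ and let $w$ be the affine function $w(z)=u(x)+\langle z-x,y\rangle$, so $w\le u$ on $\R^n$ and $w(x)=u(x)$. If $u(x)\ge v(x)$, then $w\le u\le u\vee v$ and $w(x)=(u\vee v)(x)$, so $y\in\partial(u\vee v)(x)$. If $u(x)<v(x)$, I claim $w\le v$ on $\R^n$; granting this, $w\le u\wedge v$ and $w(x)=u(x)=(u\wedge v)(x)$, so $y\in\partial(u\wedge v)(x)$, and we are done.

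Proving this claim is where the hypothesis $u\wedge v\in\fconvx$ enters essentially, and it is the crux of the argument. Suppose to the contrary $w(z_0)>v(z_0)$ for some $z_0$. Then $v(z_0)<w(z_0)\le u(z_0)$, so $(u\wedge v)(z_0)=v(z_0)<+\infty$, while $(u\wedge v)(x)=u(x)$ since $u(x)<v(x)$. For $\lambda\in[0,1]$ set $z_\lambda=(1-\lambda)x+\lambda z_0$; convexity of $u\wedge v$ gives $(u\wedge v)(z_\lambda)\le(1-\lambda)u(x)+\lambda v(z_0)$, whereas affinity of $w$ gives $w(z_\lambda)=(1-\lambda)u(x)+\lambda w(z_0)>(1-\lambda)u(x)+\lambda v(z_0)$ for $\lambda\in(0,1]$. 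Since $w\le u$, it follows that $(u\wedge v)(z_\lambda)<u(z_\lambda)$, hence $v(z_\lambda)=(u\wedge v)(z_\lambda)\le(1-\lambda)u(x)+\lambda v(z_0)$, for all $\lambda\in(0,1]$. Letting $\lambda\to0^+$ (so $z_\lambda\to x$) yields $\limsup_{\lambda\to0^+}v(z_\lambda)\le u(x)<v(x)$, contradicting the lower semicontinuity of $v$. This proves the claim, hence the union identity and the theorem. The rest is routine bookkeeping with Fenchel--Young; the obstacle is isolated in this last limiting argument, which is precisely what the non-obvious hypothesis $u\wedge v\in\fconvx$ exists to make work.
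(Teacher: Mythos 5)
Your proof is correct, but it follows a genuinely different route from the paper's. The paper argues pointwise by cases on the sign of $u(x)-v(x)$: when $u(x)<v(x)$ it shows $\partial(u\wedge v)(x)=\partial u(x)$ and $\partial(u\vee v)(x)=\partial v(x)$ via a one-dimensional localization lemma and the directional-derivative characterization of subgradients (Lemmas \ref{lemma 1}, \ref{lemma 2} and \ref{rock lemma}), while at points where $u(x)=v(x)$ the nontrivial inclusion $\partial(u\vee v)(x)\subseteq\partial u(x)\cup\partial v(x)$ rests on a separation argument for closed convex sets with convex union (Lemmas \ref{lemma -1} and \ref{lemma 0}). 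You instead prove the four set inclusions directly, extracting three of them from Fenchel--Young together with the conjugation identities of Proposition \ref{minmaxconjugates}; in particular, the inclusion that costs the paper its separation lemma falls out of $(u\vee v)^*=u^*\wedge v^*$ in one line. The only genuinely geometric step in your argument is the inclusion $\partial u(x)\subseteq\partial(u\wedge v)(x)$ when $u(x)<v(x)$, which you handle by showing that the supporting affine function of $u$ at $x$ also minorizes $v$, using convexity of $u\wedge v$ along the segment $[x,z_0]$ and lower semicontinuity of $v$ at $x$; this limiting argument plays exactly the role of the paper's Lemma \ref{lemma 1}. Both proofs are sound: yours trades the paper's two auxiliary lemmas for heavier use of conjugacy (which the paper develops anyway for other purposes), and the finiteness bookkeeping you defer to a parenthesis does work out, since membership in a subdifferential forces the relevant function value and conjugate value to be real, and properness of $u^*,v^*$ rules out the value $-\infty$.
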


The proof will require some preliminary steps. 

\begin{lemma}\label{lemma -1} Let $C_1$ and $C_2$ be non-empty, closed and convex subsets of $\R^n$. If $C_1\cup C_2$ is convex, then $C_1\cap C_2\ne\emptyset$.
\end{lemma}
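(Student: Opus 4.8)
The plan is to argue by contradiction and reduce everything to a one-dimensional connectedness statement along a segment joining $C_1$ to $C_2$. Suppose $C_1\cap C_2=\emptyset$. Since both sets are non-empty, pick $x_1\in C_1$ and $x_2\in C_2$ and consider the segment parametrized by $\gamma(t)=(1-t)x_1+tx_2$, $t\in[0,1]$. Because $C_1\cup C_2$ is convex and contains both endpoints, the entire segment is contained in $C_1\cup C_2$.

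Next I would consider the parameter sets $A=\{t\in[0,1]\colon\gamma(t)\in C_1\}$ and $B=\{t\in[0,1]\colon\gamma(t)\in C_2\}$. Since $\gamma$ is continuous and $C_1,C_2$ are closed, $A$ and $B$ are closed; they are non-empty because $0\in A$ and $1\in B$; and $A\cup B=[0,1]$ because the segment lies in $C_1\cup C_2$. Using in addition the convexity of $C_1$ and $C_2$ one even gets that $A$ and $B$ are closed subintervals of $[0,1]$, say $A=[0,a]$ and $B=[b,1]$, and then $A\cup B=[0,1]$ forces $b\le a$; this gives an alternative endgame but is not strictly needed.

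The conclusion then follows from connectedness of $[0,1]$: it cannot be written as a union of two non-empty disjoint closed sets, so $A\cap B\ne\emptyset$. Any $t^*\in A\cap B$ yields $\gamma(t^*)\in C_1\cap C_2$, contradicting the assumption; hence $C_1\cap C_2\ne\emptyset$.

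I do not expect any genuine obstacle here. The only point to watch is the temptation to invoke a separation theorem for the two disjoint closed convex sets: that route is awkward since disjoint closed convex sets need not be strictly (or strongly) separated when they are unbounded, so it does not lead to a clean contradiction, whereas the connectedness argument along the segment is direct and uses only that $C_1$ and $C_2$ are closed, convex, and cover a segment between one of their respective points.
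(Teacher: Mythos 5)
Your proof is correct, and it takes a genuinely different route from the paper. The paper argues by contradiction via separation: assuming $C_1\cap C_2=\emptyset$, it intersects both sets with a large ball $B$ to obtain disjoint \emph{compact} convex bodies $K_i=B\cap C_i$, invokes strong separation of disjoint convex bodies (\cite[Theorem 1.3.7]{Schneider}) to conclude that $K_1\cup K_2$ cannot be convex, and contrasts this with the convexity of $(C_1\cup C_2)\cap B$. Note that the unboundedness pitfall you flag is precisely what the truncation by $B$ is there to circumvent, so the separation route is not actually awkward once one localizes. Your connectedness argument along the segment $\gamma$ is more elementary: it needs no separation theorem, and in fact it never uses the convexity of $C_1$ and $C_2$ individually --- only their closedness (to make $A$ and $B$ closed) and the convexity of the union (to put the segment inside $C_1\cup C_2$) --- so it proves a slightly more general statement. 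Both proofs isolate closedness as the essential hypothesis, consistent with the paper's remark that the lemma fails without it; your version makes the exact point of entry of that hypothesis especially transparent. One small presentational point: your argument directly produces a point of $C_1\cap C_2$, so the outer ``suppose not'' framing is dispensable.
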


\begin{proof} Assume that $C_1\cap C_2=\emptyset$. Let $B$ be a ball centered at the origin such that $K_i:=B\cap C_i\ne\emptyset$
for $i=1,2$. The convex bodies $K_1$ and $K_2$ are disjoint, and hence they can be strongly separated (see \cite[Theorem 1.3.7]{Schneider}) which implies in particular 
that their union is not convex. On the other hand $K_1\cup K_2=(C_1\cup C_2)\cap B$ must be convex. 
\end{proof}

Note that the claim of the previous lemma fails to be true without assuming the sets to be closed. 

\begin{lemma}\label{lemma 0} Let $u,v\in\fconvx$. If $u\wedge v\in\fconvx$ and  $u\vee v\ge0$ in $\R^n$, then
 $u\ge0$  or $v\ge0$ in $\R^n$. 
\end{lemma}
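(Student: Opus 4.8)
The statement to prove is Lemma 0: if $u, v \in \fconvx$ with $u \wedge v \in \fconvx$ and $u \vee v \ge 0$ on $\R^n$, then $u \ge 0$ on $\R^n$ or $v \ge 0$ on $\R^n$. The plan is to argue by contradiction, supposing there exist points $a, b \in \R^n$ with $u(a) < 0$ and $v(b) < 0$, and to extract from this a violation of convexity of $u \wedge v$. First I would note that since $u(a) < 0 \le (u \vee v)(a)$, we must have $v(a) > u(a)$, hence $(u \wedge v)(a) = u(a) < 0$; symmetrically $(u \wedge v)(b) = v(b) < 0$. Thus $u \wedge v$ is strictly negative at both $a$ and $b$. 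Note also $a \ne b$, since $u \vee v \ge 0$ rules out both $u$ and $v$ being negative at a common point.

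The heart of the argument is to look at the behavior of $u \wedge v$ along the segment $[a,b]$. On one hand, $w := u \wedge v$ is convex (being in $\fconvx$) and negative at both endpoints, so $w < 0$ on all of $[a,b]$ by convexity. On the other hand, I would track which of the two functions realizes the minimum: near $a$ it is $u$ that is the smaller one (at least $u(a) < v(a)$, possibly with $v(a) = +\infty$), and near $b$ it is $v$. Consider the set $S = \{x \in [a,b] : u(x) \le v(x)\}$ and its complement in the segment. Since $u, v$ are lower semicontinuous convex functions, one should be able to locate a point $c$ on the segment where $u(c) = v(c)$ (using that $u - v$, suitably interpreted, changes sign along $[a,b]$; some care is needed where values are $+\infty$, but on the relevant sub-segment where $w$ is finite both $u$ and $v$ are finite near the crossing). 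At that crossing point $c \in (a,b)$ we have $(u \vee v)(c) = (u \wedge v)(c) = w(c) < 0$, which directly contradicts the hypothesis $u \vee v \ge 0$.

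The main obstacle I expect is handling the possibility that $u$ or $v$ takes the value $+\infty$ on parts of $[a,b]$, so that $u - v$ is not literally a continuous real function changing sign; one must argue that on the portion of the segment where $w = u \wedge v$ is finite and negative (which includes neighborhoods of $a$ and $b$ within the segment by continuity of $w$ on the relative interior of its domain), the transition from "$u$ is the active branch" to "$v$ is the active branch" forces an actual equality $u(c) = v(c)$ at some interior point with finite common value. Once that crossing point is produced, the contradiction with $u \vee v \ge 0$ is immediate. An alternative, perhaps cleaner, route would be to apply the separation-type result Lemma \ref{lemma -1} to the epigraphs, or to the sets where $u < 0$ and $v < 0$ intersected with a large ball: these two convex sets are disjoint (as $u \vee v \ge 0$), yet if $u \wedge v \in \fconvx$ one can show their union is forced to be convex, contradicting disjointness of nonempty closed convex sets whose union is convex. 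I would present whichever of these two is shorter, likely the epigraph version: $\epi(u \wedge v) = \epi(u) \cup \epi(v)$ is closed and convex, and the portions lying strictly below the hyperplane $\{t = 0\}$ split into the disjoint pieces coming from $u$ and from $v$, to which Lemma \ref{lemma -1} applies.
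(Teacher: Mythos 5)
Your preferred route --- applying Lemma \ref{lemma -1} to the sets where $u<0$ and where $v<0$, or to the portions of the epigraphs lying \emph{strictly} below the hyperplane $\{t=0\}$ --- is the right idea and is in essence the paper's own argument, but the step as you state it fails: those sets are not closed. Strict sublevel sets of lower semicontinuous functions (and relatively open slices of epigraphs) are exactly the kind of non-closed convex sets for which Lemma \ref{lemma -1} breaks down, and the paper explicitly remarks after that lemma that closedness cannot be dropped; intersecting with a large ball does not repair this, since it only enforces boundedness, not closedness. The repair is to descend to a strictly negative level and use a non-strict inequality: choose $x_1,x_2$ with $u(x_1)<0$ and $v(x_2)<0$, set $c:=u(x_1)\vee v(x_2)<0$, and take $C_1=\{u\le c\}$, $C_2=\{v\le c\}$. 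These are nonempty, closed and convex, their union is the sublevel set $\{u\wedge v\le c\}$, which is convex, so Lemma \ref{lemma -1} produces $x\in C_1\cap C_2$ with $(u\vee v)(x)\le c<0$, the desired contradiction. This is precisely the paper's proof, and your version becomes correct once $\{u<0\}$ is replaced by $\{u\le c\}$.

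Your first route (locating a point $c\in(a,b)$ with $u(c)=v(c)=(u\wedge v)(c)<0$) is also viable, but the step you flag as needing care --- the existence of such a crossing point in the presence of $+\infty$ values --- is the entire content of that approach, and you do not supply it. It can be closed, for instance, via the one-dimensional Lemma \ref{lemma 1}: if $u$ and $v$ were never equal on $[a,b]$, then $\{u<v\}$ and $\{v<u\}$ would both be relatively open in the segment (by that lemma), nonempty, disjoint, and would cover $[a,b]$, contradicting connectedness; at any point of equality both functions coincide with $u\wedge v$, which is negative on all of $[a,b]$ by convexity, contradicting $u\vee v\ge 0$. As submitted, however, neither route is complete, so you should either fix the closedness issue in the second or fill in the crossing-point argument in the first.
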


\begin{proof} By contradiction, assume that there exist $x_1,x_2\in\R^n$ such that $c:=u(x_1)\vee v(x_2)<0$. Let
$$
C_1=\{x\in\R^n\colon u(x)\le c\},\quad
C_2=\{x\in\R^n\colon v(x)\le c\}.
$$
These are non-empty, closed convex sets. Moreover
$$
\{x\in\R^n\colon (u\wedge v)(x)\le c\}=C_1\cup C_2,
$$
that is, their union is convex. By the previous lemma, there exists $x\in C_1\cap C_2$; on the other hand this leads to
$$
(u\vee v)(x)\le c<0,
$$
which is a contradiction.
\end{proof}

We proceed with a one-dimensional result.

\begin{lemma}\label{lemma 1} Let $u,v\colon\R\to\R\cup\{+\infty\}$ be convex and l.s.c.
and assume that $u\wedge v$ is convex. If $\bar x\in\R$ is such that
$u(\bar x)< v(\bar x)$, then there exists $\delta>0$ such that
$u(x)\le v(x)$ for all $x\in[\bar x-\delta,\bar x+\delta]$. 
\end{lemma}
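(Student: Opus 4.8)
The plan is to reduce everything to a statement about the epigraphs, which are planar convex sets, and exploit the fact that near $\bar x$ the function $u$ is continuous (since $\bar x \in \interno(\dom u)$, as $u(\bar x) < v(\bar x) \le +\infty$ forces $u(\bar x) < +\infty$, and $u$ is continuous on the interior of its domain; we should first note that $\bar x$ lies in the interior of $\dom(u)$, which needs a short argument if $\bar x$ is an endpoint of $\dom(u)$, but in that case $u$ jumps to $+\infty$ on one side and one works on the relevant half-neighborhood — actually since $u \wedge v$ is convex and finite near $\bar x$, $\dom(u \wedge v)$ is an interval containing $\bar x$ in its interior unless $\bar x$ is an endpoint of it, a case to handle separately). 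First I would argue by contradiction: suppose no such $\delta$ exists, so there is a sequence $x_k \to \bar x$ with $u(x_k) > v(x_k)$.

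Next I would split by whether infinitely many $x_k$ lie to the right of $\bar x$ or to the left; by symmetry assume the former, so we may take $x_k \downarrow \bar x$. The key step is then a convexity squeeze: fix a point $z < \bar x$ with $z$ close enough that $u$ is finite and continuous on $[z, x_1]$. For each $k$, write $\bar x$ as a convex combination $\bar x = (1-t_k) z + t_k x_k$ with $t_k \in (0,1)$ and $t_k \to 0$ as $k \to \infty$. Convexity of $u \wedge v$ gives
\[
(u\wedge v)(\bar x) \le (1-t_k)(u\wedge v)(z) + t_k (u\wedge v)(x_k) \le (1-t_k)(u\wedge v)(z) + t_k v(x_k),
\]
using $u(x_k) > v(x_k)$ so $(u \wedge v)(x_k) = v(x_k)$. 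Since $u(\bar x) < v(\bar x)$ we have $(u\wedge v)(\bar x) = u(\bar x)$. Now I want to also get a lower bound on $(u\wedge v)(z)$ that, combined with letting $k \to \infty$, produces a contradiction — but $t_k \to 0$ makes the right-hand side tend to $(u\wedge v)(z)$, which need not be $u(\bar x)$, so this direct combination is too weak.

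The fix, and the step I expect to be the main obstacle, is to run the convexity inequality in the \emph{other} direction: express $x_k$ as a convex combination of $\bar x$ and a further-out point, or better, use the three-point inequality with $z < \bar x < x_k$ in the form that bounds the \emph{slope}. Concretely, for a convex function $w$ on an interval and points $z < \bar x < x_k$,
\[
\frac{w(\bar x) - w(z)}{\bar x - z} \le \frac{w(x_k) - w(\bar x)}{x_k - \bar x}.
\]
Apply this to $w = u \wedge v$. The left side is a fixed number $m := \big((u\wedge v)(\bar x) - (u\wedge v)(z)\big)/(\bar x - z)$, finite and independent of $k$. Hence $(u\wedge v)(x_k) \ge (u\wedge v)(\bar x) + m(x_k - \bar x) = u(\bar x) + m(x_k - \bar x) \to u(\bar x)$. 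On the other hand $(u\wedge v)(x_k) = v(x_k) \to v(\bar x)$ by lower semicontinuity from below and upper semicontinuity of $v$ at $\bar x$ if $\bar x \in \interno(\dom v)$ — and in fact since $(u \wedge v)(x_k) \le u(x_k) \to u(\bar x) < v(\bar x)$ by continuity of $u$ near $\bar x$, we get for $k$ large that $(u\wedge v)(x_k) \le u(\bar x) + \varepsilon < v(\bar x)$, while simultaneously $(u\wedge v)(x_k) = v(x_k)$. Combining, $v(x_k) \le u(x_k)$ for large $k$, contradicting $u(x_k) > v(x_k)$. I would then also record that lower semicontinuity gives $\liminf_k v(x_k) \ge v(\bar x) > u(\bar x) \ge \limsup_k u(x_k)$, which already yields $v(x_k) > u(x_k)$ for large $k$ — this is in fact the cleanest route and does not even need the convexity squeeze. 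So the actual proof is short: by continuity of $u$ at $\bar x$ and lower semicontinuity of $v$, for $x$ near $\bar x$ one has $u(x) < v(x)$; the only real care needed is the preliminary observation that $\bar x$ is an interior point of $\dom(u)$ (so $u$ is continuous there), which follows because $u(\bar x)$ is finite and $u \wedge v$ being convex and finite near $\bar x$ pins down the local structure.
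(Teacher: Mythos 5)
Your ``cleanest route'' --- continuity of $u$ at $\bar x$ plus lower semicontinuity of $v$ --- is exactly the paper's argument for the case $\bar x\in\interno(\dom(u))$, and it is correct there (note that it does not even use the convexity of $u\wedge v$). The gap is that you never actually dispose of the boundary case: your closing assertion that $\bar x$ must be an interior point of $\dom(u)$ ``because $u(\bar x)$ is finite and $u\wedge v$ is convex and finite near $\bar x$'' is false, since nothing in the hypotheses forces $u\wedge v$ to be finite on a neighbourhood of $\bar x$. Take $u=I_{[0,1]}$, $v=1+I_{[0,1]}$ and $\bar x=0$: then $u\wedge v=u$ is convex and l.s.c., $u(0)=0<1=v(0)$, yet $\dom(u)=[0,1]$ has $\bar x$ on its boundary and $u$ is not continuous at $\bar x$ as a function on $\R$. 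So the endpoint case you flag early on as ``to handle separately'' is a genuine case, and it is the only place where the hypothesis on $u\wedge v$ is actually needed.

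The paper closes that case in one line, and you would need something equivalent: if, say, $u\equiv+\infty$ on $(\bar x,+\infty)$, then $v\equiv+\infty$ there as well. Indeed, if $v$ were finite at some $x_0>\bar x$, then $u\wedge v$ would be finite on $[\bar x,x_0]$ and equal to $v$ on $(\bar x,x_0]$, and convexity of $u\wedge v$ along this segment gives $\limsup_{x\to\bar x^+}v(x)\le(u\wedge v)(\bar x)=u(\bar x)<v(\bar x)$, contradicting the lower semicontinuity of $v$. On the side where $u$ jumps to $+\infty$ the inequality $u\le v$ is then trivial, and on the other side one argues as in the interior case, using the one-sided continuity of the closed convex function $u$ relative to $\dom(u)$ together with l.s.c.\ of $v$. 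Adding this closes the proof; the slope-inequality and convexity-squeeze material in the middle of your plan can be discarded, as you yourself observe.
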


\begin{proof} Clearly $\bar x\in\dom(u)$. If $\bar x$ is in the interior of $\dom(u)$, then $u$ is continuous at $\bar x$ and the statement follows. If $\bar x$ is a boundary point of $\dom(u)$ and $u(x)=+\infty$ for $x>\bar x$, say, then the convexity of $u\wedge v$ implies that $v(x)=+\infty$ for $x>\bar x$, too.
\end{proof}

\begin{lemma}\label{lemma 2} Let $u,v\in\fconvx$ and $u\wedge v\in\fconvx$. If $x\in\R^n$ 
is such that  $u(x)<v(x)$, then $(u\wedge v)'(x;y)=u'(x;y)$ for every $y\in\R^n$.
Moreover, if $v(x)$ is finite, then $(u\vee v)'(x;y)=v'(x;y)$ for every $y\in\R^n$.
\end{lemma}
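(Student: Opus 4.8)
The plan is to fix an arbitrary direction $y\in\R^n$ and reduce both assertions to the one-dimensional statement of Lemma~\ref{lemma 1}, applied along the line through $x$ in the direction $y$. To this end I would set $\phi(t)=u(x+t\,y)$ and $\psi(t)=v(x+t\,y)$ for $t\in\R$. Since $t\mapsto x+t\,y$ is a continuous affine map, $\phi$ and $\psi$ are convex and lower semicontinuous, and $(\phi\wedge\psi)(t)=(u\wedge v)(x+t\,y)$ is convex as the composition of the convex function $u\wedge v$ with an affine map. The hypothesis $u(x)<v(x)$ gives $\phi(0)<\psi(0)$; in particular $u(x)<+\infty$, so $x\in\dom(u)\cap\dom(u\wedge v)$ and $(u\wedge v)(x)=u(x)$. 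Hence Lemma~\ref{lemma 1} applies at $\bar x=0$ and produces a $\delta>0$ such that $\phi(t)\le\psi(t)$ for all $t\in[-\delta,\delta]$; equivalently, $(u\wedge v)(x+t\,y)=u(x+t\,y)$ and $(u\vee v)(x+t\,y)=v(x+t\,y)$ for all $t\in[-\delta,\delta]$.

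The first claim is then immediate: for $0<t\le\delta$ the difference quotients of $u\wedge v$ and of $u$ at $x$ in the direction $y$ coincide, and letting $t\to0^+$ (the limits exist in $\R\cup\{\pm\infty\}$ by \cite[Theorem~23.1]{Rockafellar}) gives $(u\wedge v)'(x;y)=u'(x;y)$. For the second claim, note that the extra hypothesis that $v(x)$ is finite is precisely what ensures $x\in\dom(v)$, and hence, using $u(x)<v(x)<+\infty$, that $x\in\dom(u\vee v)$ with $(u\vee v)(x)=v(x)$, so that $(u\vee v)'(x;y)$ is well defined; recall also that $u\vee v\in\fconvx$. Comparing difference quotients for $0<t\le\delta$ and letting $t\to0^+$ yields $(u\vee v)'(x;y)=v'(x;y)$. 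Since $y\in\R^n$ was arbitrary, this completes the argument.

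I do not anticipate a serious obstacle; the proof is essentially a localization along a line. The only points needing a little care are the verification that the one-dimensional restrictions $\phi$, $\psi$, and $\phi\wedge\psi$ meet the hypotheses of Lemma~\ref{lemma 1} --- namely lower semicontinuity of a restriction to a line and convexity of $\phi\wedge\psi$ --- together with the bookkeeping about domains needed so that all directional derivatives in play are actually defined, which is exactly why $u(x)<v(x)$ alone suffices for the $u\wedge v$ statement whereas the additional finiteness of $v(x)$ is required for the $u\vee v$ statement.
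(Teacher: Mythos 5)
Your proposal is correct and follows essentially the same route as the paper: restrict $u$, $v$, and $u\wedge v$ to the line through $x$ in direction $y$, invoke Lemma \ref{lemma 1} to get $u\le v$ (hence $u\wedge v=u$ and $u\vee v=v$) on a small symmetric interval, and compare difference quotients. The extra bookkeeping you supply about domains and the role of the finiteness of $v(x)$ is accurate and consistent with the paper's (terser) argument.
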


\begin{proof} The statement is trivially true for $y=0$; so we assume $y\ne0$. 
Consider the line $L$ passing through $x$ and parallel to $y$. 
The restrictions of $u$ and $v$ to $L$ are l.s.c. convex functions of one variable, such that their minimum is convex. By the previous lemma, 
for $|t|$ sufficiently small,
$$
(u\wedge v)(x+ty)=u(x+ty),\quad (u\vee v)(x+ty)=v(x+ty).
$$
The conclusion follows immediately.
\end{proof}

\bigskip

\noindent{\em Proof of Theorem \ref{IE subgradient point}}. 
First, if $u(x)<v(x)$, then Lemma \ref{rock lemma} and Lemma \ref{lemma 2} imply that
\begin{equation}\label{subdifferential ineq}
\partial (u\wedge v)(x)=\partial u(x)\,\,\,\text{ and }\,\,\,\partial(u\vee v)(x)=\partial v(x).
\end{equation}
Second, if $u(x)=v(x)$,  we show that 
\begin{equation}\label{subdifferential eq}
\begin{array}{rcl}
\partial(u\vee v)(x)&=&\partial u(x)\cup\partial v(x),\\
\partial(u\wedge v)(x)&=&\partial u(x)\cap\partial v(x).
\end{array}
\end{equation}
Indeed, if $u(x)=v(x)=+\infty$, then all subdifferentials are empty. So assume that $x\in\dom(u)\cap\dom(v)$. 
The first equation in \eqref{subdifferential eq} is a straightforward consequence of the definition of subdifferentials and the equality $u(x)=v(x)$. Concerning 
the second equation, let
$$
C=\partial(u\vee v)(x),\quad D=\partial u(x)\cup\partial v(x).
$$
If $y\in D$, then we may assume that $y\in\partial u(x)$, which yields, for every $z\in\R^n$,
$$
(u\vee v)(z)\ge u(z)\ge u(x)+\langle z-x,y\rangle =(u\vee v)(x)+\langle z-x,y\rangle,
$$
that is,  $y\in C$. Now assume that $y\in C$. Define $w\in\fconvx$ as
$$
w(z)=u(x)+\langle z-x,y\rangle,
$$
and set $\bar u=u-w$ and $\bar v=v-w$. Then $\bar u,\bar v\in\fconvx$ and
$$
\bar u\wedge \bar v=(u\wedge v)-w,\quad \bar u\vee \bar v=(u\vee v)-w.
$$
In particular, $\bar u\wedge \bar v\in\fconvx$. Now, $y\in\partial(u\vee v)(x)$ implies $0\in\partial(\bar u\vee\bar v)(x)$; moreover
$(\bar u\vee \bar v)(x)=0$. Hence $\bar u\vee \bar v\ge0$ in $\R^n$. By Lemma \ref{lemma 0}, one of the two functions $\bar u$ and $\bar v$ is
non-negative in $\R^n$. Assuming for instance $\bar u\ge0$, we obtain, as $\bar u$ vanishes at $x$, that $0\in\partial\bar u(x)$ which implies $y\in\partial u(x)$,
that is, $y\in D$. 

Finally, note that (\ref{subdifferential ineq}) and (\ref{subdifferential eq}) imply the statement of the theorem.
\hfill$\square$\\

\section{The Graph of the Subgradient Map and  Parallel Sets of Functions}\label{The graph of the subgradient map and the parallel set of a function}

We start by recalling two important definitions.

\begin{definition}\label{nor} For $u\in\fconvx$, the graph of the subdifferential map  of $u$ is defined as
$$
\Gamma_u:=\{(x,y)\colon x\in\R^n,\ y\in\partial u(x)\}.
$$
\end{definition}

\goodbreak
\medskip
 Next we define parallel sets of a convex function. 

\begin{definition}\label{parallel set} Let $u\in\fconvx$ and $\eta\subset\R^n\times\R^n$. For $s\ge 0$, we set
$$
P_s(u,\eta)=\{x+s y\colon  (x,y)\in\eta\cap\Gamma_u\}.
$$
\end{definition}

Note that for $s=0$ we have $P_0(u,\eta)=\pi_1(\eta \cap \Gamma_u)$.

\subsection{Inclusion-exclusion results}

The following result is an immediate consequence of Theorem~\ref{IE subgradient point}.

\begin{proposition}\label{val-prop-E-Nor} Let $u,v\in\fconvx$. If  $u\wedge v\in\fconvx$, then
$$\Gamma_{u\vee v}\cap\Gamma_{u\wedge v}=\Gamma_u\cap\Gamma_v \,\,\,\text{ and }\,\,\,
\Gamma_{u\vee v}\cup\Gamma_{u\wedge v}=\Gamma_u\cup\Gamma_v.$$
\end{proposition}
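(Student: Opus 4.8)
The plan is to reduce \texttt{Proposition~\ref{val-prop-E-Nor}} directly to the pointwise inclusion-exclusion identities for subdifferentials proved in \texttt{Theorem~\ref{IE subgradient point}}. Recall that by \texttt{Definition~\ref{nor}}, a pair $(x,y)$ lies in $\Gamma_w$ precisely when $y\in\partial w(x)$. So the two set identities to be proved are equivalent to the assertion that, for every fixed $x\in\R^n$, the fiber of each side over $x$ agrees, i.e.
\begin{equation*}
\big(\partial(u\vee v)(x)\cap\partial(u\wedge v)(x)\big)=\big(\partial u(x)\cap\partial v(x)\big)
\end{equation*}
and the analogous identity with $\cap$ replaced by $\cup$. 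But these are exactly the two displayed equalities in \texttt{Theorem~\ref{IE subgradient point}}, which holds under the hypothesis $u\wedge v\in\fconvx$.

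Concretely, I would argue as follows. Fix $(x,y)\in\R^n\times\R^n$. Then
\begin{equation*}
(x,y)\in\Gamma_{u\vee v}\cap\Gamma_{u\wedge v}
\iff y\in\partial(u\vee v)(x)\text{ and }y\in\partial(u\wedge v)(x)
\iff y\in\partial(u\vee v)(x)\cap\partial(u\wedge v)(x),
\end{equation*}
and by \texttt{Theorem~\ref{IE subgradient point}} this last condition is equivalent to $y\in\partial u(x)\cap\partial v(x)$, i.e.\ to $(x,y)\in\Gamma_u\cap\Gamma_v$. Since $x$ and $y$ were arbitrary, $\Gamma_{u\vee v}\cap\Gamma_{u\wedge v}=\Gamma_u\cap\Gamma_v$. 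The union identity is obtained verbatim with $\cup$ in place of $\cap$, again invoking the corresponding line of \texttt{Theorem~\ref{IE subgradient point}}.

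There is essentially no obstacle here: the proposition is a routine "bundle the fibers" restatement of the pointwise theorem, and the only thing to verify is that the hypothesis $u\wedge v\in\fconvx$ needed to apply \texttt{Theorem~\ref{IE subgradient point}} is precisely the hypothesis assumed in the proposition. One small point worth spelling out for cleanliness is that a subset of $\R^n\times\R^n$ is determined by its fibers over points $x\in\R^n$, so fiberwise equality of two such graphs is the same as equality of the graphs; this is immediate from the definition of $\Gamma_w$ as $\{(x,y):y\in\partial w(x)\}$. Hence the proof is genuinely "an immediate consequence of \texttt{Theorem~\ref{IE subgradient point}}," as the statement already advertises.
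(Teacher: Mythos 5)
Your proof is correct and matches the paper exactly: the paper states this proposition as an immediate consequence of Theorem~\ref{IE subgradient point}, and your fiberwise unpacking of $\Gamma_w=\{(x,y):y\in\partial w(x)\}$ is precisely the intended argument.
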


Next, we establish a corresponding result for the parallel sets of the subdifferential graph.

\begin{proposition}\label{val-prop-Pi-rho} Let $u,v\in\fconvx$. If $u\wedge v\in\fconvx$, then
\begin{eqnarray}
P_s(u\vee v,\eta)\cup P_s(u\wedge v,\eta)=P_s(u,\eta)\cup P_s(v,\eta)\label{val-prop-Pi-rho1},
\\
P_s(u\vee v,\eta)\cap P_s(u\wedge v,\eta)=P_s(u,\eta)\cap P_s(v,\eta)\label{val-prop-Pi-rho2}
\end{eqnarray}
for every $\eta\subset\R^n\times\R^n$ and $s\ge0$. 
\end{proposition}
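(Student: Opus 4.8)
The plan is to reduce everything to the set-level inclusion-exclusion identities for the subdifferential graphs established in Proposition \ref{val-prop-E-Nor}. The key observation is that the parallel-set operation $P_s(\,\cdot\,,\eta)$ factors as $P_s(w,\eta) = \Phi_s\bigl(\eta\cap\Gamma_w\bigr)$, where $\Phi_s\colon\R^n\times\R^n\to\R^n$ is the \emph{fixed} (i.e.\ independent of $w$) map $\Phi_s(x,y)=x+sy$. Thus the proposition will follow once we combine two facts: (a) the identities $\Gamma_{u\vee v}\cap\Gamma_{u\wedge v}=\Gamma_u\cap\Gamma_v$ and $\Gamma_{u\vee v}\cup\Gamma_{u\wedge v}=\Gamma_u\cup\Gamma_v$ from Proposition \ref{val-prop-E-Nor}, which after intersecting with the fixed set $\eta$ give the same identities with every $\Gamma$ replaced by $\eta\cap\Gamma$; and (b) elementary set-theoretic behaviour of images under a map.

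For the union identity \eqref{val-prop-Pi-rho1}, this is immediate: the image of a union is the union of the images, so
\[
P_s(u\vee v,\eta)\cup P_s(u\wedge v,\eta)
=\Phi_s\bigl((\eta\cap\Gamma_{u\vee v})\cup(\eta\cap\Gamma_{u\wedge v})\bigr)
=\Phi_s\bigl((\eta\cap\Gamma_{u})\cup(\eta\cap\Gamma_{v})\bigr)
=P_s(u,\eta)\cup P_s(v,\eta),
\]
using Proposition \ref{val-prop-E-Nor} in the middle step. The intersection identity \eqref{val-prop-Pi-rho2} is the only place requiring genuine care, since in general $\Phi_s(A\cap B)\subseteq\Phi_s(A)\cap\Phi_s(B)$ with possible strict inclusion. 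The inclusion ``$\subseteq$'' in \eqref{val-prop-Pi-rho2} is free from the general image inclusion together with Proposition \ref{val-prop-E-Nor}. For the reverse inclusion ``$\supseteq$'', I would take a point $z\in P_s(u,\eta)\cap P_s(v,\eta)$ and distinguish the two cases $s=0$ and $s>0$.

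The case $s=0$ is handled directly by the first identity of Proposition \ref{val-prop-E-Nor}, since $P_0(w,\eta)=\pi_1(\eta\cap\Gamma_w)$ and projections commute with intersections no better than general maps — but here I would instead note that a point in $P_0(u,\eta)\cap P_0(v,\eta)$ only tells us there exist $(x,y_1)\in\eta\cap\Gamma_u$ and $(x,y_2)\in\eta\cap\Gamma_v$ with the \emph{same} first coordinate $x=z$; one then has to produce a single pair in $\eta\cap\Gamma_u\cap\Gamma_v$ over $z$, which is exactly what the identity $\Gamma_{u\vee v}\cap\Gamma_{u\wedge v}=\Gamma_u\cap\Gamma_v$ combined with Theorem \ref{IE subgradient point} delivers at the level of a fixed point $x$: at $x=z$ we have $\partial u(z)\cap\partial v(z)=\partial(u\vee v)(z)\cap\partial(u\wedge v)(z)$, and $\partial u(z)\cup\partial v(z)=\partial(u\vee v)(z)\cup\partial(u\wedge v)(z)$, so any $y_1\in\partial u(z)$ and $y_2\in\partial v(z)$ both lie in $\partial u(z)\cup\partial v(z)=\partial(u\vee v)(z)\cup\partial(u\wedge v)(z)$; whichever of these two it is, the \emph{same} vector then lies in the other one too because the graphs agree as sets — so in fact $\partial(u\vee v)(z)=\partial u(z)\cup\partial v(z)$ cannot separate things and one checks $y_1,y_2\in\partial(u\vee v)(z)$. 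The cleanest route, which I would use for both $s=0$ and $s>0$ uniformly, is: if $z=\Phi_s(x_1,y_1)=\Phi_s(x_2,y_2)$ with $(x_1,y_1)\in\eta\cap\Gamma_u$, $(x_2,y_2)\in\eta\cap\Gamma_v$, then by Proposition \ref{val-prop-E-Nor} both pairs lie in $(\eta\cap\Gamma_{u\vee v})\cup(\eta\cap\Gamma_{u\wedge v})$; the main obstacle — and the reason Section \ref{The graph of the subgradient map and the parallel set of a function} was set up this way — is to upgrade ``each of the two pairs lies in the union of the two graphs'' to ``both pairs can be taken in the intersection''. This is done using that $P_s(u\vee v,\eta)$ and $P_s(u\wedge v,\eta)$ are themselves related by \eqref{val-prop-Pi-rho1}: since $z\in P_s(u,\eta)\cup P_s(v,\eta)=P_s(u\vee v,\eta)\cup P_s(u\wedge v,\eta)$, say $z\in P_s(u\vee v,\eta)$, it remains to show $z\in P_s(u\wedge v,\eta)$; for this one uses the pointwise identity \eqref{subdifferential eq} (resp.\ \eqref{subdifferential ineq}) at the relevant base point to locate a witnessing pair for $u\wedge v$ whose image is $z$. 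I expect this bookkeeping — tracking base points and which of $u,v$ dominates — to be the only real content; everything else is formal image-of-union/intersection manipulation.
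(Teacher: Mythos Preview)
Your treatment of the union identity \eqref{val-prop-Pi-rho1} is correct and coincides with the paper's proof.

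For the intersection identity \eqref{val-prop-Pi-rho2} your instinct that more is needed than a formal ``image of intersection'' manipulation is right, but your sketch has a genuine gap. First, the claim that ``$\subseteq$'' is free is wrong: from $\Phi_s(A\cap B)\subseteq\Phi_s(A)\cap\Phi_s(B)$ and Proposition~\ref{val-prop-E-Nor} one only obtains that \emph{both} sides of \eqref{val-prop-Pi-rho2} contain $\Phi_s(\eta\cap\Gamma_u\cap\Gamma_v)$, which yields no inclusion between them. Second, your plan for ``$\supseteq$'' stalls at the crucial moment: knowing that each of $(x_1,y_1),(x_2,y_2)$ lies in $\Gamma_{u\vee v}\cup\Gamma_{u\wedge v}$ does not, via the pointwise relations \eqref{subdifferential ineq}--\eqref{subdifferential eq} alone, produce witnesses in \emph{both} $\eta\cap\Gamma_{u\vee v}$ and $\eta\cap\Gamma_{u\wedge v}$ mapping to $z$.

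The missing ingredient is that for $s>0$ and any $w\in\fconvx$ the map $\Phi_s$ is \emph{injective} on $\Gamma_w$: if $x_1+sy_1=x_2+sy_2$ with $y_i\in\partial w(x_i)$, then $x_1-x_2=-s(y_1-y_2)$, while monotonicity of $\partial w$ gives $\langle x_1-x_2,y_1-y_2\rangle\ge0$, hence $-s|y_1-y_2|^2\ge0$ and so $(x_1,y_1)=(x_2,y_2)$. With this, the intersection case goes through symmetrically in both directions. For instance, take $z\in P_s(u,\eta)\cap P_s(v,\eta)$, say $z=\Phi_s(p_u)=\Phi_s(p_v)$ with $p_u\in\eta\cap\Gamma_u$, $p_v\in\eta\cap\Gamma_v$. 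By Proposition~\ref{val-prop-E-Nor}, each of $p_u,p_v$ lies in $\Gamma_{u\vee v}\cup\Gamma_{u\wedge v}$. If one lies in $\Gamma_{u\vee v}$ and the other in $\Gamma_{u\wedge v}$ we are done. If both lie in (say) $\Gamma_{u\vee v}$, injectivity on $\Gamma_{u\vee v}$ forces $p_u=p_v\in\Gamma_u\cap\Gamma_v=\Gamma_{u\vee v}\cap\Gamma_{u\wedge v}$, and again we are done. The paper's one-word ``analogous'' suppresses exactly this injectivity argument.

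Finally, a warning about $s=0$: here $\Phi_0=\pi_1$ is not injective on subdifferential graphs, and in fact \eqref{val-prop-Pi-rho2} is \emph{false} as stated. With $n=1$, $u(x)=\max(0,x)$, $v(x)=\max(0,-x)$ (so $u\wedge v\equiv0$ is convex) and $\eta=\{(0,1),(0,-1)\}$, one computes $P_0(u,\eta)\cap P_0(v,\eta)=\{0\}$ but $P_0(u\vee v,\eta)\cap P_0(u\wedge v,\eta)=\emptyset$. So neither your argument nor the paper's can succeed for $s=0$; fortunately this does not affect the applications, since in Theorem~\ref{Hessian measure valuations} the coefficients of the Steiner polynomial are already determined by \eqref{val-prop-Pi-rho1}--\eqref{val-prop-Pi-rho2} for $s>0$.
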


\begin{proof} The case $s=0$ follows easily from the previous proposition.
For $s>0$, we have
\begin{eqnarray*}
P_s(u\vee v,\eta)\cup P_s(u\wedge v,\eta)&=&\{x+s y\colon (x,y)\in\eta\cap\Gamma_{u\vee v}\}\cup
\{x+s y\colon (x,y)\in\eta\cap\Gamma_{u\wedge v}\}\\
&=&\{x+s y\colon (x,y)\in\eta\cap(\Gamma_{u\vee v}\cup\Gamma_{u\wedge v})\}\\
&=&\{x+s y\colon (x,y)\in\eta\cap(\Gamma_{u}\cup\Gamma_{v})\}\\
&=&\{x+s y\colon (x,y)\in\eta\cap\Gamma_{u}\}\cup\{x+s y\colon (x,y)\in\eta\cap\Gamma_{v}\}\\
&=&P_s(u,\eta)\cup P_s(v,\eta).
\end{eqnarray*}
This proves \eqref{val-prop-Pi-rho1}. The proof of \eqref{val-prop-Pi-rho2} is analogous.
\end{proof}

\subsection{An auxiliary proposition}

Given $u\in\fconvx$ and $r>0$, we set
$$
\Gamma_u^r:=\Gamma_u\cap\{(x,y)\in\R^{2n}\colon |y|\le r\}.
$$
We require the following properties of the Lipschitz regularization of a function $u\in\fconvx$ 
(in addition to those presented in Section \ref{Approximation}).

\begin{proposition}\label{help!} If $u\in\fconvx$ and $r>0$, then $u_r=\reg{u}{r}$  has the following properties.
\begin{enumerate}[(i)]
%\item $\dom(u_r)=\R^n$. 
\item $\Gamma_u^r\subset\Gamma_{u_r}$. \label{two}
\item For every $\eta\subset\Gamma_u^r$ and $s\ge0$, we have $P_s(u,\eta)=P_s(u_r,\eta)$. \label{three}
%\item If $u_j$ is a sequence in $\fconvx$ that epi-convergent to $u\in\fconvx$, then 
%$(u_j)_r$ epi-converges to $u_r$.
\end{enumerate}
\end{proposition}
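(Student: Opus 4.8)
The plan is to prove both statements by relating the subdifferential of the Lipschitz regularization $u_r = \reg{u}{r}$ to that of $u$ via part (\ref{sechs}) of Proposition~\ref{approximation lemma}, which states that if $y \in \partial u(x)$ with $|y| < 1/r$, then $u_r(x) = u(x)$ and $\partial u_r(x) = \partial u(x) \cap \Bn_{1/r}$. Notice, however, that the present proposition is stated with $\Gamma_u^r = \Gamma_u \cap \{|y|\le r\}$, that is, with the radius $r$ itself rather than $1/r$. So I would first address the trivial but necessary bookkeeping: either invoke the proposition with the understanding that $\Bn_{1/r}$ should be read as $\Bn_r$ in the notation fixed at the start of this section (equivalently, apply Proposition~\ref{approximation lemma}(\ref{sechs}) with parameter $1/r$ in place of $r$), or simply note that the constant is immaterial for the argument. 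I will phrase the proof so that the radius bound $|y| \le r$ is exactly what part (\ref{sechs}) gives after the appropriate substitution.

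For part (\ref{two}): let $(x,y) \in \Gamma_u^r$, so $y \in \partial u(x)$ and $|y| \le r$. I want to conclude $y \in \partial u_r(x)$, i.e. $(x,y) \in \Gamma_{u_r}$. The one subtlety is that Proposition~\ref{approximation lemma}(\ref{sechs}) is stated for the strict inequality $|y| < 1/r$ (here $|y|<r$), whereas $\Gamma_u^r$ allows equality $|y| = r$. To handle the boundary case, I would argue by a limiting/monotonicity argument: if $|y| = r$, pick $r' > r$; then $|y| < r'$, so by the argument for the open case $y \in \partial u_{r'}(x)$ where $u_{r'} = \reg{u}{r'}$; but one checks directly from the argmax characterization in Lemma~\ref{le:conjugate_subgradient} (exactly as in the proof of Proposition~\ref{approximation lemma}(\ref{sechs})) that $y \in \partial u_{r'}(x)$ together with $|y| \le r$ forces $y \in \partial u_r(x)$ — because the relevant argmax over $\Bn_{r'}$ is attained at a point of $\Bn_r$, hence is also the argmax over $\Bn_r$. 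Alternatively, and more cleanly, I would just redo the chain of equivalences in the proof of Proposition~\ref{approximation lemma}(\ref{sechs}) directly with the non-strict bound: $y \in \partial u(x)$ and $|y| \le r$ iff $y \in \argmax_{z \in \Bn_r}(\langle x,z\rangle - u^*(z))$ iff $y \in \argmax_{z \in \R^n}(\langle x,z\rangle - (u^* + I_{\Bn_r})(z))$ iff $y \in \partial u_r(x)$. The first "iff" uses Lemma~\ref{le:conjugate_subgradient}(i)$\Leftrightarrow$(v) plus the fact that restricting a maximizer to a ball containing it does not change the argmax set (as long as it is nonempty there). This establishes $\Gamma_u^r \subset \Gamma_{u_r}$, and in fact along the way gives $u_r(x) = u(x)$ at such points, which is needed below.

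For part (\ref{three}): fix $\eta \subset \Gamma_u^r$ and $s \ge 0$. By Definition~\ref{parallel set}, $P_s(u,\eta) = \{x + sy : (x,y) \in \eta \cap \Gamma_u\} = \{x+sy : (x,y) \in \eta\}$, since $\eta \subset \Gamma_u^r \subset \Gamma_u$ already. Similarly, by part (\ref{two}), $\eta \subset \Gamma_u^r \subset \Gamma_{u_r}$, so $P_s(u_r,\eta) = \{x+sy : (x,y) \in \eta \cap \Gamma_{u_r}\} = \{x+sy : (x,y) \in \eta\}$. The two right-hand sides are literally the same set, hence $P_s(u,\eta) = P_s(u_r,\eta)$. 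No further argument is needed here: (\ref{three}) is a formal consequence of (\ref{two}) and the definition of the parallel set.

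The main obstacle is entirely in part (\ref{two}), and specifically in the closed-ball boundary case $|y| = r$: one must make sure that Proposition~\ref{approximation lemma}(\ref{sechs}), as stated with a strict inequality, can be pushed to the boundary. I expect the cleanest route is not to approximate but to observe that the chain of equivalences in its proof works verbatim with $<$ replaced by $\le$, the only genuine point being that if the supremum $\sup_{z \in \R^n}(\langle x,z\rangle - u^*(z))$ is attained at some $y$ with $|y| \le r$, then the argmax of $\langle x,\cdot\rangle - u^*$ over $\Bn_r$ coincides with its argmax over $\R^n$ intersected with $\Bn_r$ — a statement which is true for the argmax of any function over a subset containing one of its global maximizers. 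Once that is granted, everything else is bookkeeping, and part (\ref{three}) follows immediately.
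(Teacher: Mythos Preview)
Your proof is correct and follows the same route as the paper: invoke the subdifferential identity from Proposition~\ref{approximation lemma}(\ref{sechs}) (whose proof already uses the non-strict inequality $|y|\le 1/r$ in the chain of equivalences, so your careful treatment of the boundary case $|y|=r$ is justified but not strictly new), and then observe that (\ref{three}) is an immediate formal consequence of (\ref{two}) via the definition of $P_s$. You are also right to flag the $r$ versus $1/r$ mismatch between the definition of $\Gamma_u^r$ and the radius appearing in $\reg{u}{r}$; the paper's one-line proof glosses over both this and the strict/non-strict issue, so your version is simply a more explicit rendering of the same argument.
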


\begin{proof}
%The first statement is just \eqref{eins} from Proposition \ref{approximation lemma}.
By \eqref{sechs} from Proposition \ref{approximation lemma}, it follows from $(x,y)\in\Gamma_u^r$ that $u_r(x)=u(x)$ and
$\partial u_r(x)=\partial u(x)\cap\{y\in\R^n\colon|y|\le r\}$, which implies \eqref{two} and therefore \eqref{three}. 
%The last statement follows from Proposition \ref{approximation lemma 2}.
\end{proof}

\section{Hessian Measures}\label{Hessian measures}

We recall and extend the definition of Hessian measures. First, we establish the following result.

\begin{theorem}\label{Thm Hessian measures}
For $u\in\fconvx$, there are non-negative Borel measures $\Theta_0(u,\cdot), \dots, \Theta_n(u,\cdot)$ on $\R^{2n}$
such that
\begin{equation}\label{Steiner}
\hm^n(P_s(u,\eta))=\sum_{i=0}^n\binom nis^i\,\Theta_{n-i}(u,\eta)
\end{equation}
for every $\eta\in\Borel(\R^{2n})$ and $s\ge0$,
\end{theorem}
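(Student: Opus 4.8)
The strategy is to establish a local Steiner-type formula by showing that, for fixed $\eta\in\Borel(\R^{2n})$, the map $s\mapsto \hm^n(P_s(u,\eta))$ is a polynomial in $s$ of degree at most $n$ with non-negative coefficients, and then to define $\Theta_{n-i}(u,\eta)$ as the $i$th coefficient (up to the binomial normalization) and verify that $\eta\mapsto\Theta_j(u,\eta)$ is a measure. The natural route is reduction to the Lipschitz-regularized function $u_r=\reg{u}{r}$, for which the subdifferential graph is the graph of a (globally Lipschitz) monotone map and the classical results of \cite{Colesanti-Hug-2000} apply, and then to pass to the limit $r\to 0$.

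First I would fix $r>0$ and work with $u_r=\reg{u}{r}$. By Proposition \ref{help!}, for any Borel $\eta\subset\Gamma_u^r$ and every $s\ge 0$ we have $P_s(u,\eta)=P_s(u_r,\eta)$, and $u_r$ is finite and globally Lipschitz with constant $\le 1/r$. For such functions the existence of Hessian measures $\Theta_i(u_r,\cdot)$ satisfying \eqref{Steiner} is already known (this is the content of \cite{Colesanti-Hug-2000}, whose proof proceeds by establishing that $P_s(u_r,\cdot)$ behaves like a local parallel set and invoking a Steiner formula together with a coarea/area-formula computation to identify the coefficients as integrals of $[\Hess u_r]_i$ on the regularity set). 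Hence for $\eta\in\Borel(\R^{2n})$ with $\eta\subset\{(x,y)\colon |y|\le r\}$, we may \emph{define}
\begin{equation}\label{def theta}
\Theta_j(u,\eta):=\Theta_j(u_r,\eta\cap\Gamma_u^r),\qquad j=0,\dots,n,
\end{equation}
and \eqref{Steiner} holds for such $\eta$. The key point is \emph{consistency}: if $r<t$ then $\Gamma_u^r\subset\Gamma_u^t$, $u_r=u_t$ on the relevant set by Proposition \ref{approximation lemma}(vi), and one checks $\Theta_j(u_t,\eta\cap\Gamma_u^r)=\Theta_j(u_r,\eta\cap\Gamma_u^r)$ using that $P_s(u_r,\cdot)=P_s(u_t,\cdot)$ on subsets of $\Gamma_u^r$ (again Proposition \ref{help!}) together with the uniqueness of the polynomial coefficients. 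Since any compact subset of $\R^{2n}$, and hence any Borel set after decomposition, lies in some slab $|y|\le r$, and since $\Gamma_u=\bigcup_{r>0}\Gamma_u^r$, one extends the definition to all $\eta\in\Borel(\R^{2n})$ by $\Theta_j(u,\eta):=\lim_{r\to\infty}\Theta_j(u_r,\eta\cap\Gamma_u^r)$ (a monotone limit, hence well-defined in $[0,+\infty]$), and verifies $\sigma$-additivity of $\eta\mapsto\Theta_j(u,\eta)$ from that of each $\Theta_j(u_r,\cdot)$ and monotone convergence. Non-negativity is inherited from the non-negativity of the measures $\Theta_j(u_r,\cdot)$.

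It remains to check that \eqref{Steiner} survives the passage to the limit for arbitrary Borel $\eta$. For $\eta\subset\Gamma_u$ this is immediate since both sides are already computed on $\Gamma_u^r$ for $r$ large and the contributions are monotone in $r$; for general $\eta\in\Borel(\R^{2n})$ one notes $P_s(u,\eta)=P_s(u,\eta\cap\Gamma_u)$ by definition of the parallel set, and $\eta\cap\Gamma_u=\bigcup_r(\eta\cap\Gamma_u^r)$ is an increasing union, so $\hm^n(P_s(u,\eta))=\lim_{r}\hm^n(P_s(u,\eta\cap\Gamma_u^r))=\lim_r\sum_{i}\binom{n}{i}s^i\Theta_{n-i}(u_r,\eta\cap\Gamma_u^r)=\sum_i\binom{n}{i}s^i\Theta_{n-i}(u,\eta)$, where the interchange of limit and finite sum is trivial.

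\textbf{Main obstacle.} The genuinely delicate point is the case $u\notin C^2$ and the identification of the limiting objects as \emph{measures} rather than merely finitely additive set functions: one must control that $\hm^n(P_s(u,\cdot))$ is $\sigma$-additive (this uses that the sets $\{x+sy\}$ for $(x,y)$ ranging over disjoint Borel subsets of $\Gamma_u$ are, up to $\hm^n$-null overlap, disjoint — a fact that relies on properties of the subdifferential graph of a convex function, essentially that the ``multiplicity'' of the map $(x,y)\mapsto x+sy$ on $\Gamma_u$ is $\hm^n$-a.e.\ controlled), and that the polynomial-in-$s$ structure is not destroyed in the limit. Equivalently, the real work is already done inside the cited result \cite{Colesanti-Hug-2000} for the Lipschitz case; the new contribution here is purely the extension-by-exhaustion argument, whose only subtlety is verifying the compatibility of the $\Theta_j(u_r,\cdot)$ across different $r$ and the measurability/$\sigma$-additivity of the monotone limit. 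I would therefore devote the bulk of the written proof to (a) invoking \cite{Colesanti-Hug-2000} cleanly for $u_r$, (b) the consistency identity \eqref{def theta}, and (c) the monotone-limit verification of \eqref{Steiner} and of the measure axioms.
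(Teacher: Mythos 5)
Your proposal follows essentially the same route as the paper: reduce to the Lipschitz regularization $u_r$, invoke the known existence of Hessian measures for finite convex functions (Theorem 3.1 of \cite{Colesanti-Hug-2000b}) together with $P_s(u,\eta)=P_s(u_r,\eta)$ on $\Gamma_u^r$, check consistency of the resulting set functions across different $r$ via uniqueness of polynomial coefficients, and extend to all of $\Borel(\R^{2n})$ by a monotone limit under which the Steiner formula survives. The argument is correct and matches the paper's proof in all essentials.
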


We will call the measures $\Theta_i(u,\cdot)$ for $i=0,\dots,n$ the {\em Hessian measures} of $u$.  The proof makes use of the following result from 
\cite{Colesanti-Hug-2000b} (see Theorem 3.1; also see \cite[Section 5]{Colesanti-Hug-2000}).
For an open subset $U$ of $\R^n$, we denote by $\Borel(U)$ the family of Borel subsets of $U$.

\begin{theorem}\label{CH}
If $u\in\fconvx$ and $U:=\interno(\dom(u))$ is not empty, then there are non-negative Borel measures $\Theta_0(u,\cdot), \dots, \Theta_n(u,\cdot)$ on $U\times \R^n$  
such that \eqref{Steiner} holds for every $\eta\in\Borel(U\times\R^n)$ and $s\ge0$.
\end{theorem}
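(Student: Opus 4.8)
The plan is to realize the subdifferential graph $\Gamma_u$ as a bi-Lipschitz copy of $\R^n$ by means of Moreau's resolvent, so that each parallel set $P_s(u,\eta)$ becomes the image of a Borel set under an explicit Lipschitz map whose Hausdorff measure can then be computed by the area formula. First I would introduce the resolvent $p:=(I+\partial u)^{-1}$: for $u\in\fconvx$ and $z\in\R^n$, the function $x\mapsto u(x)+\tfrac12|x-z|^2$ is proper, l.s.c.\ and strictly convex, hence attains its minimum at a unique point $p(z)$; equivalently, $z-p(z)\in\partial u(p(z))$. A short monotonicity estimate shows that $p$ is $1$-Lipschitz, and one has $p=\nabla G$ with $G:=(u+\tfrac12|\cdot|^2)^*$. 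Since $u+\tfrac12|\cdot|^2$ is $1$-strongly convex, $G$ is finite with $G\in C^{1,1}(\R^n)$, so $Dp=\Hess G$ is symmetric with eigenvalues $\lambda_1(z),\dots,\lambda_n(z)$ in $[0,1]$ for almost every $z$. Finally, $\Psi\colon\R^n\to\R^{2n}$, $\Psi(z):=(p(z),z-p(z))$, is a continuous injection with $\Psi(\R^n)=\Gamma_u$ and $|\Psi(z)|^2\ge\tfrac12|z|^2$.

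Next I would fix $\eta\in\Borel(U\times\R^n)$, set $T_\eta:=\Psi^{-1}(\eta)\in\Borel(\R^n)$ (bounded whenever $\eta$ is, by the last inequality), and, for $s\ge0$, put $\psi_s:=s\operatorname{id}+(1-s)p$. Since every $(x,y)\in\eta\cap\Gamma_u$ equals $\Psi(z)$ for the unique $z=x+y\in T_\eta$, and then $x+sy=\psi_s(z)$, we have $P_s(u,\eta)=\psi_s(T_\eta)$; in particular $P_s(u,\eta)$ is the image of a Borel set under a Lipschitz map, hence $\hm^n$-measurable. By Rademacher, $\psi_s$ is differentiable almost everywhere with $D\psi_s=sI+(1-s)Dp$, whose eigenvalues $\lambda_i+s(1-\lambda_i)$ are non-negative, so $\det D\psi_s\ge0$. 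For $s>0$ the map $\psi_s$ is injective (again by monotonicity of $\partial u$), and the area formula gives
$$\hm^n(P_s(u,\eta))=\int_{T_\eta}\det D\psi_s(z)\d z=\int_{T_\eta}\prod_{i=1}^n\bigl(\lambda_i(z)+s(1-\lambda_i(z))\bigr)\d z=\sum_{k=0}^n s^k\int_{T_\eta}c_k(z)\d z,$$
where $c_k(z):=\sum_{|I|=k}\prod_{i\in I}(1-\lambda_i(z))\prod_{j\notin I}\lambda_j(z)$ satisfies $0\le c_k\le\binom nk$. I would then define $\Theta_{n-k}(u,\cdot):=\binom nk^{-1}\,\Psi_{\#}(c_k\,\hm^n)$, the pushforward under $\Psi$ of the measure $c_k\,\hm^n$ on $\R^n$; by the preceding observations these are non-negative, locally finite Borel measures on $U\times\R^n$ (countable additivity being automatic for a pushforward), and the displayed identity is precisely \eqref{Steiner} for $s>0$.

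The remaining case $s=0$ is the step I expect to be the main obstacle, because $\psi_0=p$ is \emph{not} injective: its fibre over a point $x$ is $\{x\}+\partial u(x)$, which has positive dimension exactly when $\partial u(x)$ fails to be a singleton. The way around this is to note that $p$ maps the bad set $\{z\colon p^{-1}(p(z))\ne\{z\}\}$ into $\{x\colon\dim\partial u(x)\ge1\}$, a set of $\hm^n$-measure zero; the area formula with multiplicities then forces $\det Dp=0$ almost everywhere on the bad set, so discarding it leaves a set on which $p$ is injective, and $\int_{T_\eta}\det Dp\d z=\hm^n(p(T_\eta))=\hm^n(P_0(u,\eta))$, which is the $s=0$ instance of \eqref{Steiner}. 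To finish, uniqueness of the $\Theta_i$ is automatic, a polynomial of degree $\le n$ being determined by its values at $n+1$ points; and a change of variables $x=p(z)$, using $Dp(z)=(I+\Hess u(p(z)))^{-1}$ for $u\in\fconvx\cap C^2(\R^n)$, shows that there the $\Theta_i(u,\beta\times\R^n)$ are absolutely continuous with densities given, up to the combinatorial factors fixed by \eqref{Steiner}, by the elementary symmetric functions of $\Hess u$.

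Besides the $s=0$ subtlety, the one delicate point is the sign of the Jacobian: the area formula only produces $|\det D\psi_s|$, so one genuinely needs the two-sided bound $0\le\lambda_i\le1$ on the eigenvalues of $Dp$, which rests on the convexity of both $G$ and the Moreau envelope $M_u$ of $u$ (equivalently, on $\nabla G=p$ being $1$-Lipschitz); everything else is bookkeeping with standard convex analysis. An alternative, closer to \cite{Colesanti-Hug-2000,Colesanti-Hug-2000b}, would be to prove \eqref{Steiner} first for $u\in C^2$ via the smooth change of variables $x\mapsto x+s\nabla u(x)$ and then pass to the limit along mollifications of $u$ on compact subsets of $U$; there the difficulty shifts to the continuity of $\eta\mapsto\hm^n(P_s(\,\cdot\,,\eta))$ under epi-convergence, which requires Attouch-type graph convergence of the subdifferentials together with uniform local bounds on the approximating measures.
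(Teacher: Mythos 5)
Your argument is correct, but it follows a genuinely different route from the one the paper relies on: Theorem \ref{CH} is not proved in the paper at all — it is imported from Colesanti--Hug, whose proof passes to the epigraph of $u$ in $\R^{n+1}$, works with its normal bundle and the support (curvature) measures of closed convex sets, and transports the resulting Steiner formula back to $\Gamma_u$ through a locally bi-Lipschitz homeomorphism (this is exactly the picture the paper later recalls in the proof of Proposition \ref{proposition integral representation}). You instead stay in $\R^n$ and parametrize $\Gamma_u$ by the proximal map $p=\nabla\big(u+\tfrac12|\cdot|^2\big)^*$, reducing \eqref{Steiner} to the area formula for the Lipschitz maps $\psi_s=s\operatorname{id}+(1-s)p$; the points that actually carry the proof — firm non-expansiveness of $p$, the two-sided eigenvalue bound $0\le\lambda_i\le1$ coming from convexity of both $G$ and the Moreau envelope, injectivity of $\psi_s$ for $s>0$ via monotonicity of $\partial u$, and the multiplicity argument at $s=0$ — are all in place. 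This is essentially the strategy of Colesanti's 1997 paper and mirrors the classical construction of support measures of convex bodies via the metric projection. What your route buys: it is self-contained, needs no hypothesis on $\interno(\dom(u))$, and produces the measures on all of $\Borel(\R^{2n})$ at once, so it in fact proves Theorem \ref{Thm Hessian measures} directly and renders the Lipschitz-regularization extension step superfluous; moreover, since $|\Psi(z_1)-\Psi(z_2)|^2\ge\tfrac12|z_1-z_2|^2$, the map $\Psi$ is bi-Lipschitz onto $\Gamma_u$, so the rectifiability and integral representation of Theorem \ref{thm integral representation} also drop out with one more application of the area formula. The only steps worth making explicit are that the bad set in your $s=0$ argument is $p^{-1}(N)$ with $N=\{x\colon\partial u(x)\ \text{is not a singleton}\}$, which is Borel and Lebesgue-null because it is contained in the union of the non-differentiability set of $u$ with $\bd(\dom(u))$, and that the coefficients $c_k$ are measurable because the a.e.\ derivative $Dp$ of a Lipschitz map is.
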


\subsection{ Proof of Theorem \ref{Thm Hessian measures}.} 
Let $r>0$ and set $u_r =\reg{u}{r}$. 
For $\eta\in\Borel(\R^{2n})$ with $\eta\subset \R^n\times \Bn_r$, 
it follows from \eqref{three} of Proposition~\ref{help!}  that 
$P_s(u,\eta)=P_s(u_r, \eta\cap\Gamma_u)$. Hence, by Theorem \ref{CH},
\begin{equation*}
\hm^n(P_s(u,\eta))=\sum_{i=0}^n\binom nis^i\,\Theta_{n-i}(u_r, \eta\cap\Gamma_u).
\end{equation*}
In this way we have proved the following fact: for every $r>0$, there exists a set of $(n+1)$ Borel measures on $\Borel(\R^n\times \Bn_r)$,
namely $\Theta_{i,r}(u,\cdot\cap\Gamma_u) :=\Theta_{i}(u_r,\cdot\cap\Gamma_u)$, such that \eqref{Steiner} holds for every $\eta\in\Borel(\R^n\times \Bn_r)$ and $s\ge0$. If $r'\ge r\ge0$,
as $\Borel(\R^n\times \Bn_{r'})\supset\Borel(\R^n\times \Bn_r)$, we easily get
$$\Theta_{i,r}(u,\eta)=\Theta_{i,r'}(u,\eta)$$
for all $\eta\in\Borel(\R^n\times \Bn_r)$ and $i=0,\dots,n$. Such measures can be extended to $\Borel(\R^{2n})$ by a standard procedure, as they are non-negative:
$$
\Theta_i(u,\eta)=\lim\nolimits_{r\to+\infty}\Theta_{i,r}(u,\eta\cap (\R^n\times \Bn_r))
$$
for $\eta\in\Borel(\R^{2n})$ and $i=0,\dots,n$.

The validity of the Steiner formula \eqref{Steiner} is preserved as
$$
P_s(u,\eta)=\bigcup\nolimits_{r\ge 0}\ P_s(u,\eta\cap\Gamma_u^r)
$$
so that 
$$
\hm^n(P_s(u,\eta))=\lim_{r\to+\infty}\hm^n(P_s(u,\eta\cap (\R^n\times \Bn_r)))
$$
for every $\eta\in\Borel(\R^{2n})$ and  $s\ge 0$.
\hfill $\square$
\smallskip
\goodbreak

\begin{remark} The notion of Hessian measure is clearly of local nature. Let $u$ and $v$ be real-valued convex functions defined in an open convex set 
$U\subset\R^n$. Assume that $\partial u(x)=\partial v(x)$ for every $x\in\beta\in\Borel(U)$. If $\eta\in\Borel(U\times\R^n)$ is such that 
$\pi_1(\eta)\subset\beta$, then
$$
P_s(u,\eta)=P_s(v,\eta).
$$
for every $s\ge 0$. Hence

$$
\Theta_i(u,\eta)=\Theta_i(v,\eta)
$$
for $i=0,\dots,n$.
\end{remark}

\begin{remark}\label{support of Hessian measures} For $u\in\fconvx$, the support of the Hessian measures of $u$ is contained in $\Gamma_u$. Indeed,
if $\eta\subset\R^n\setminus\Gamma_u$, then for every $(x,y)\in\eta$, we have $\partial u(x)=\emptyset$. Hence, for $s\ge0$, we have $P_s(u,\eta)=\emptyset$, which implies that
$$
0=\hm^n(P_s(u,\eta))=\sum_{i=0}^n\binom nis^i\,\Theta_{n-i}(u,\eta).
$$
As Hessian measures are non-negative, we get $\Theta_i(u,\eta)=0$ for every $i=0,\dots,n$. 
\end{remark}

\begin{remark}\label{locally-finite-Hess-measures} Hessian measures are {\em locally finite}.
Indeed, assume that $|(x,y)|\le r$ for every $(x,y)\in\eta$, for some $r>0$. As each $\Theta_i(u,\cdot)$ is non-negative, by the Steiner formula 
(for $s=1$) we get
$$
\binom ni\ \Theta_i(u,\eta)\le\hm^n(P_1(u,\eta)).
$$
On the other hand, we have $P_1(u,\eta)\subset\{z\in\R^n\colon |z|\le 2r\}$.
Hence, for $i=0,\dots, n$,
\begin{equation}\label{serve}
\Theta_i(u,\eta)\le c(n)\ \diam(\eta)^{n}
\end{equation}
for every $u\in\fconvx$ and  $\eta\in\Borel(\R^{2n})$, where $c(n)$ only depends on the dimension and $\diam$ stands for diameter.
\end{remark}

\subsection{Continuity}

We show that Hessian measures are weakly continuous with respect to epi-convergence.

\begin{theorem}\label{continuity} Let $u_k$ be a sequence in $\fconvx$. If $u_k$ epi-converges to $u\in\fconvx$, then
the sequence of measures $\Theta_i(u_k,\cdot)$ converges weakly to $\Theta_i(u,\cdot)$ as $k\to +\infty$ for every $i=0,\dots,n$.
\end{theorem}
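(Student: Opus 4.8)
The plan is to establish the weak convergence in the form appropriate for locally finite measures: it suffices to prove that $\int_{\R^n\times\R^n} f\d\Theta_i(u_k,\cdot)\to\int_{\R^n\times\R^n} f\d\Theta_i(u,\cdot)$ for every $f\in C(\R^n\times\R^n)$ with compact support and every $i$. By Remark \ref{locally-finite-Hess-measures} the measures $\Theta_i(u_k,\cdot)$ are uniformly bounded on each bounded subset of $\R^n\times\R^n$, so this is equivalent to $\Theta_i(u_k,\eta)\to\Theta_i(u,\eta)$ for every bounded Borel set $\eta$ with $\Theta_i(u,\bd\eta)=0$; and by the local nature of the statement we may assume that $f$ (resp.\ $\eta$) is supported in $\R^n\times\interno(\Bn_\rho)$ for some fixed $\rho>0$. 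The overall strategy is to pass to Lipschitz regularizations, thereby reducing to the case of globally finite convex functions, for which the continuity of Hessian measures on a fixed open domain is available from \cite{Colesanti-Hug-2000}.

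First I would fix $\rho$ and choose $r>0$ small enough that: (i) by Proposition \ref{approximation lemma 2}, $u_{k,r}:=\reg{u_k}{r}$ epi-converges to $u_r:=\reg{u}{r}$ as $k\to\infty$, and, since these are finite convex functions on $\R^n$ by Proposition \ref{approximation lemma}, this convergence is locally uniform; and (ii) $1/r>\rho$. I then claim that
\begin{equation*}
\Theta_i(u,\eta)=\Theta_i(u_r,\eta)\quad\text{and}\quad\Theta_i(u_k,\eta)=\Theta_i(u_{k,r},\eta)
\end{equation*}
for every Borel set $\eta\subset\R^n\times\interno(\Bn_\rho)$ and every $i$. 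The construction in the proof of Theorem \ref{Thm Hessian measures}, together with Proposition \ref{help!}, already yields $\Theta_i(u,\eta)=\Theta_i(u_r,\eta\cap\Gamma_u)$, and likewise $\Theta_i(u_k,\eta)=\Theta_i(u_{k,r},\eta\cap\Gamma_{u_k})$; note also that this construction applies even when $\interno(\dom u)=\emptyset$, since $u_r$ is finite on all of $\R^n$. It therefore only remains to see that $\Gamma_u$ and $\Gamma_{u_r}$ coincide inside $\R^n\times\interno(\Bn_{1/r})$: one inclusion is contained in Proposition \ref{help!}, and the other follows from $u_r^\ast=u^\ast+I_{\Bn_{1/r}}$ together with Lemma \ref{le:conjugate_subgradient}, which show that a subgradient of $u_r$ of norm strictly less than $1/r$ is a subgradient of $u$. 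Combined with $\rho<1/r$ and $\operatorname{supp}\Theta_i(u_r,\cdot)\subset\Gamma_{u_r}$ (Remark \ref{support of Hessian measures}), this gives the displayed identities, and the theorem is reduced to the following statement: if $v_k$ are finite convex functions on $\R^n$ converging locally uniformly to a finite convex function $v$, then $\Theta_i(v_k,\cdot)$ converges weakly to $\Theta_i(v,\cdot)$ on $\R^n\times\R^n$.

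For this last statement I would invoke the continuity of Hessian measures on a fixed open domain from \cite[Section 5]{Colesanti-Hug-2000}. If a self-contained argument is wanted, it can be carried out as follows. Local uniform convergence of convex functions forces the subdifferential graphs $\Gamma_{v_k}$ to converge to $\Gamma_v$ in the Painlev\'e--Kuratowski sense (see \cite[Theorem 12.35]{Rockafellar-Wets}). Fix $s\ge0$ and a bounded Borel set $\eta$ whose boundary is $\Theta_j(v,\cdot)$-null for $j=0,\dots,n$. Since the parallel map $(x,y)\mapsto x+sy$ is Lipschitz on the $n$-rectifiable sets $\Gamma_{v_k}$ and $\Gamma_v$, this set convergence should yield $\hm^n(P_s(v_k,\eta))\to\hm^n(P_s(v,\eta))$; writing the local Steiner formula \eqref{Steiner} for $n+1$ distinct values of $s$ and inverting the resulting Vandermonde system then gives $\Theta_i(v_k,\eta)\to\Theta_i(v,\eta)$ for every such $\eta$ and every $i$, which, together with the uniform local bounds of Remark \ref{locally-finite-Hess-measures}, is the asserted weak convergence.

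The step I expect to be the main obstacle is the passage to the limit in $\hm^n(P_s(v_k,\eta))$ in the finite-function case --- equivalently, whatever delicacy is packaged inside the cited result of \cite{Colesanti-Hug-2000} --- because the parallel map is not injective, so the Hausdorff measure of the image is not continuous under mere set convergence of the graphs, and one must control overlaps using the rectifiable structure of $\Gamma_v$ and restrict to $\Theta$-continuity sets. A second, more elementary point requiring care is the identity $\Theta_i(u,\cdot)=\Theta_i(u_r,\cdot)$ on the window $\R^n\times\interno(\Bn_\rho)$, i.e.\ that Lipschitz regularization neither creates nor destroys Hessian mass there; this is exactly the small-norm-subgradient lemma above. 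The remaining ingredients --- epi-convergence implying local uniform convergence for finite functions, the Vandermonde inversion, and patching together the windows --- are routine given the material of Sections \ref{Approximation}--\ref{Hessian measures}.
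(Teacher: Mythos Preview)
Your reduction is correct, but the route is genuinely different from the paper's. You localise to a cylinder $\R^n\times\interno(\Bn_\rho)$, replace $u$ and each $u_k$ by their Lipschitz regularizations $u_r$, $u_{k,r}$, verify that inside the window the Hessian measures are unchanged, and then quote \cite[Theorem~1.1]{Colesanti-Hug-2000} for the resulting finite functions. The ingredients you invoke (Propositions~\ref{approximation lemma}, \ref{approximation lemma 2}, \ref{help!}, Remark~\ref{support of Hessian measures}, and the subgradient identity for $u_r^*=u^*+I_{\Bn_{1/r}}$) are precisely those the paper develops, and the argument goes through. The paper, by contrast, does \emph{not} reduce to finite functions at all: it proves the Portmanteau criterion directly for the parallel-set measures $\eta\mapsto\hm^n(P_s(u,\eta))$, showing $\limsup$ on compacts via an $\varepsilon$-neighbourhood argument and $\liminf$ on open sets by the quadratic perturbation $w_k(x)=s\,u_k(x)+|x|^2/2$, whose unique minimisers converge under epi-convergence. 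This makes the paper's proof self-contained and, in effect, \emph{re-proves} the finite-function case en route; your approach is more modular but outsources the analytic core to the cited reference.

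One comment on your self-contained sketch for the finite case: the step ``set convergence of $\Gamma_{v_k}$ plus Lipschitz parallel map should give $\hm^n(P_s(v_k,\eta))\to\hm^n(P_s(v,\eta))$'' is, as you suspect, the real content, and the heuristic as stated is not a proof --- Painlev\'e--Kuratowski convergence of rectifiable sets does not by itself give convergence of $\hm^n$ of Lipschitz images. The paper's direct argument (or the one in \cite{Colesanti-Hug-2000}) is what actually supplies this; if you want a self-contained version, the quadratic-perturbation trick is the missing idea.
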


\begin{proof}
First, using the Steiner formula \eqref{Steiner}, it is easy to see that it is sufficient to prove for every $s>0$ that the sequence of measures $\eta\mapsto\hm^n(P_s(u_k,\eta))$
converges weakly to the measure $
\eta\mapsto\hm^n(P_s(u,\eta))$ as $k\to+\infty$.
\medskip

Second, to establish the previous claim we prove that if $\eta\subset\R^{2n}$ is compact, then
\begin{equation}\label{compact}
\limsup_{k\to+\infty}\hm^n(P_s(u_k,\eta))\le\hm^n(P_s(u,\eta)),
\end{equation}
and, if $\eta\subset\R^{2n}$ is open, then
\begin{equation}\label{open}
\liminf_{k\to+\infty}\hm^n(P_s(u_k,\eta))\ge\hm^n(P_s(u,\eta)).
\end{equation}

\medskip

Third, we prove \eqref{compact}. To simplify notation, for fixed $\eta\subset\R^{2n}$ and $s>0$, we set
$A=P_s(u,\eta)$ and $A_k=P_s(u_k,\eta)$ for $k\in\N$.
As $\eta$ is compact, $A$ is compact as well.
Indeed, it is obviously bounded. To prove that it is closed, let $z_j$ be a sequence in $A$, converging to some $\bar z\in\R^n$. 
Then there exists a sequence $x_j$ in $\R^n$ and a sequence $y_j$ such that $y_j\in\partial u(x_j)$ and $(x_j,y_j)\in\eta$ for every $j$ and 
$z_j=x_j+s y_j$. By compactness, we may assume that $x_j$ and $y_j$ converge to some $\bar x\in\R^n$ and $\bar y\in\R^n$, respectively, with
$(\bar x,\bar y)\in\eta$. For every $x\in\R^n$, we have
$$
u(x)\ge u(x_j)+\langle x-x_j,y_j\rangle.
$$
Passing to the limit and using the lower semicontinuity of $u$, we obtain
$$
u(x)\ge u(\bar x)+\langle x-\bar x,\bar y\rangle
$$
for all  $x\in\R^n$. Hence $\bar y\in\partial u(\bar x)$, so that $\bar z\in A$. 
\goodbreak

Next, we prove that for every $\varepsilon>0$ there exists $\bar k\in\N$ such that 
\begin{equation}\label{compact 3}
A_k\subset(A)_\varepsilon 
\end{equation}
for all $k\ge\bar k$, 
where $
A_\varepsilon:=\{x\in\R^n\colon \dist(x,A)\le\varepsilon\}$ and $\dist(x,A)=\inf\{\vert x-y\vert: y\in A\}$.
Note that \eqref{compact} follows from \eqref{compact 3}, as
$A=\bigcap_{\varepsilon>0}A_\varepsilon$ implies that $\hm^n(A)=\lim_{\varepsilon\to0^+}\hm^n(A_\varepsilon)$,
where the first equality holds because $A$ is closed. 
We argue by contradiction. Assume that there exists $\varepsilon>0$, a sequence $x_k\in\R^n$ and a sequence $y_k\in\R^n$, with 
$y_k\in\partial u_k(x_k)$ and $(x_k,y_k)\in\eta$ for every $k\in\N$, such that 
\begin{equation}\label{distance}
\dist(x_k+s y_k,A)\ge \varepsilon
\end{equation}
for all $k\in\N$.
By compactness, we may assume that the sequences $x_k$ and $y_k$ converge to $\bar x$ and $\bar y\in\R^n$, respectively,
with $(\bar x,\bar y)\in\eta$. Let $x\in\R^n$; by the definition of epi-convergence, there exists a sequence $\hat x_k$ such that
$$
\lim_{k\to+\infty} u_k(\hat x_k)=u(x).
$$
Moreover
$$
\liminf_{k\to+\infty} u_k(x_k)\ge u(\bar x),
$$
again by epi-convergence. From the inequality
$$
u_k(\hat x_k)\ge u_k(x_k)+\langle\hat x_k-x_k, y_k\rangle
$$
for all $k\in\N$,
passing to the limit in $k$, we deduce
$$
u(x)\ge u(\bar x)+\langle x-\bar x,\bar y\rangle
$$
for $x\in\R^n$, that is,
$\bar y\in\partial u(\bar x)$ so that $x_k+s y_k$ converges to a point of $A$, which contradicts \eqref{distance}. 
\goodbreak
\medskip

Finally, we prove \eqref{open}. Define $w, w_k\in\fconvx$ as
$$
w(x)=s\, u(x)+\frac{|x|^2}{2},\quad
w_k(x)=s\, u_k(x)+\frac{|x|^2}{2},
$$
and note that
$
\partial w(x)=x+s\, \partial u(x)$ while $\partial w_k(x)=x+s\, \partial u_k(x)$.
Let us fix $\bar x\in\R^n$ and $\bar z\in\partial w(x)$. The function $\hat w\in\fconvx$ defined as
$$
\hat w(x)=w(x)-\langle x-\bar x,\bar z\rangle
$$ 
has an absolute minimum at $x=\bar x$. Consider now, for $k\in\N$, the function $\hat w_k\in\fconvx$ defined as
$$
\hat w_k(x)=w_k(x)-\langle x-\bar x,\bar z\rangle.
$$ 
This is a strictly convex and lower semicontinuous function on $\R^n$, which verifies
$$
\lim_{|x|\to+\infty}\hat w_k(x)=+\infty.
$$
This follows from the fact that $u_k$ is bounded from below by an affine function, since its subdifferential is non-empty at least at one point. Hence 
$w_k$ admits a unique (by strict convexity) absolute minimum point $x_k$. This implies $0\in\partial\hat w_k(x_k)$ which in turn implies that $\bar z\in\partial w_k(x_k)$.

As $u_k$ epi-converges to $u$, the sequence $\hat w_k$ epi-converges to $\hat w$, so that (see for instance Theorem 7.33 in \cite{Rockafellar-Wets})
$$
\lim_{k\to+\infty} x_k=\bar x.
$$ 
We have proved that given $(\bar x,\bar y)\in\eta\cap\Gamma_u$, there exists a sequence $(x_k,y_k)$ with 
$(x_k,y_k)\in\Gamma_{u_k}$ for every $k$, such that $\bar x+s \bar y=x_k+s y_k$ for every $k\in\N$; 
moreover, $x_k$ tends to $\bar x$ as $k\to+\infty$. These conditions
imply that $y_k$ tends to $\bar y$. Hence, as $\eta$ is open, $(x_k,y_k)$ is definitively contained in $\eta$. Thus 
every $z\in P_s(u,\eta)$ is  contained in $P_s(u_k,\eta)$. Consequently,
$$
P_s(u,\eta)\subset\bigcup\nolimits_{i\in\N}\left( \bigcap\nolimits_{k\ge i}\ P_s(u_k,\eta)\right).
$$
This easily implies \eqref{open}.
\end{proof}

\begin{remark} In the special case of finite functions, the previous theorem was proved in \cite[Theorem 1.1]{Colesanti-Hug-2000}.
\end{remark}

\subsection{Integral representations of Hessian measures}

Our starting point is the following result, which follows in a rather direct way from results proved in \cite{Colesanti-Hug-2000b}. 
We recall that a subset $A$ of $\R^{2n}$ is {\em countably $n$-rectifiable} if there exist a countable family 
$\{U_i\colon i\in\N\}$ of open subsets of $\R^n$ and a countable family $\{f_i\colon i\in\N\}$ of Lipschitz maps, such that
$f_i\colon U_i\to\R^{2n}$ and
$$
\hm^n\left(A\setminus \bigcup\nolimits_{i\in\N}f_i(U_i)\right)=0
$$
(see {\em e.g.} \cite{Federer}).

\begin{proposition}\label{proposition integral representation} If $u\in\fconvx$ is such that $\dom(u)=\R^n$, then $\Gamma_u$ is a 
 countably $n$-rectifiable set. Moreover, for $i\in\{0,\dots,n\}$,
there exists a non-negative, $\hm^n$-measurable function $\theta_i(u,\cdot)\colon\Gamma_u \to \R$
such that 
\begin{equation*}
\Theta_i(u,\eta)=\int_{\eta\cap\Gamma_u}\theta_i(u,(x,y))\d\hm^{n}(x,y)
\end{equation*}
for every $\eta\in\Borel(\R^{2n})$.
\end{proposition}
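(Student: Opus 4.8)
The plan is to reduce the statement to the corresponding facts for \emph{finite} convex functions established in \cite{Colesanti-Hug-2000b}, and then to obtain the densities $\theta_i(u,\cdot)$ by a Radon--Nikodym argument against $\hm^n$ restricted to an explicit bi-Lipschitz parametrization of $\Gamma_u$. Since $\dom(u)=\R^n$, the function $u$ is finite and continuous on $\R^n$, so $U:=\interno(\dom(u))=\R^n$; hence Theorem~\ref{CH} applies with $U\times\R^n=\R^{2n}$ and yields non-negative Borel measures on $\R^{2n}$ for which \eqref{Steiner} holds for all $\eta\in\Borel(\R^{2n})$ and $s\ge0$. As the coefficients of the polynomial in $s$ on the right-hand side of \eqref{Steiner} are determined by its values, these measures are precisely the Hessian measures $\Theta_0(u,\cdot),\dots,\Theta_n(u,\cdot)$ of Theorem~\ref{Thm Hessian measures}, and it suffices to prove both assertions for a finite $u$; below I describe the mechanism behind them.

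The first step is to parametrize $\Gamma_u$. Since $\partial u$ is a maximal monotone operator (see \cite{Rockafellar}), the map $J\colon\R^n\to\R^n$ assigning to each $z$ the unique point $x$ with $z-x\in\partial u(x)$ (the resolvent of $\partial u$) is well defined and $1$-Lipschitz (see \cite{Rockafellar-Wets}). Define $g\colon\R^n\to\R^{2n}$ by $g(z)=(J(z),\,z-J(z))$. Writing $a=z_1-z_2$ and $b=J(z_1)-J(z_2)$, monotonicity of $\partial u$ at the points $J(z_1),J(z_2)$ with subgradients $z_1-J(z_1),z_2-J(z_2)$ gives $\langle a-b,b\rangle\ge0$, from which a short computation yields
\[
\tfrac{1}{\sqrt{2}}\,|z_1-z_2|\le|g(z_1)-g(z_2)|\le|z_1-z_2|
\qquad\text{for all }z_1,z_2\in\R^n .
\]
Moreover $y\in\partial u(x)$ is equivalent to $x=J(x+y)$, so $g(\R^n)=\Gamma_u$ and $g$ is a bi-Lipschitz homeomorphism of $\R^n$ onto $\Gamma_u$. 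In particular $\Gamma_u$ is a single Lipschitz image of $\R^n$, hence countably $n$-rectifiable, and $\hm^n$ restricted to $\Gamma_u$ is a $\sigma$-finite Borel measure.

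The second step produces the densities. Recall that each $\Theta_i(u,\cdot)$ is a non-negative Borel measure on $\R^{2n}$ whose support is contained in $\Gamma_u$ (since $P_s(u,\eta)=\emptyset$ when $\eta\cap\Gamma_u=\emptyset$) and which is locally finite (since $\binom{n}{i}\Theta_i(u,\eta)\le\hm^n(P_1(u,\eta))$ by \eqref{Steiner} at $s=1$, and $P_1(u,\eta)$ is bounded whenever $\eta$ is); thus $\Theta_i(u,\cdot)$ is $\sigma$-finite. By the Radon--Nikodym theorem it is therefore enough to show that $\Theta_i(u,\cdot)$ is absolutely continuous with respect to $\hm^n$ restricted to $\Gamma_u$, after which $\theta_i(u,\cdot)$ may be chosen to be a non-negative Borel density. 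For the absolute continuity, let $\eta\in\Borel(\R^{2n})$ with $\hm^n(\eta\cap\Gamma_u)=0$. Every $(x,y)\in\Gamma_u$ satisfies $(x,y)=g(x+y)$, so $P_1(u,\eta)=\{x+y\colon(x,y)\in\eta\cap\Gamma_u\}=g^{-1}(\eta)$; since $g^{-1}$ is Lipschitz on $\Gamma_u$, this set is $\hm^n$-null, i.e.\ $\hm^n(P_1(u,\eta))=0$. Evaluating \eqref{Steiner} at $s=1$ and using non-negativity of all $\Theta_j(u,\cdot)$ gives $\binom{n}{i}\,\Theta_i(u,\eta)\le\hm^n(P_1(u,\eta))=0$, hence $\Theta_i(u,\eta)=0$, as required.

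The only point requiring care in this plan is the verification that $g$ is bi-Lipschitz and maps $\R^n$ onto $\Gamma_u$, which is elementary. The genuinely substantial content — and where I expect the main obstacle to lie — is the concrete \emph{identification} of the density proved in \cite{Colesanti-Hug-2000b}: applying the area formula for Lipschitz maps (see \cite{Federer}) to $(x,y)\mapsto x+sy$ on the rectifiable set $\Gamma_u$ rewrites $\hm^n(P_s(u,\eta))$ as the integral over $\eta\cap\Gamma_u$ of the $n$-dimensional Jacobian of that map along $\Gamma_u$, a polynomial in $s$ of degree at most $n$; comparing its coefficients with \eqref{Steiner} exhibits $\theta_{n-i}(u,\cdot)$ as the $i$-th elementary symmetric function of the generalized principal curvatures of $\Gamma_u$. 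Making this area-formula computation rigorous — in particular controlling the multiplicity of the maps $(x,y)\mapsto x+sy$ on $\Gamma_u$ for $s>0$ — is the delicate part, but it is exactly what is carried out in \cite{Colesanti-Hug-2000b}; for the statement as phrased the Radon--Nikodym argument above suffices, and the explicit formula for the densities may be quoted from there when it is needed later.
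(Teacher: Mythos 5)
Your proof is correct, but it follows a genuinely different route from the paper's. The paper obtains both assertions by transfer from \cite{Colesanti-Hug-2000b}: it takes the countable $n$-rectifiability of $\Gamma_u$ and, more importantly, the explicit integral representation of the Hessian measures on the set ${\mathcal F}_u\subset\nor(\epi(u))$ proved there (Theorem 3.1 of that reference), and then pulls the representation back to $\Gamma_u$ through the locally bi-Lipschitz homeomorphism $T\colon\Gamma_u\to{\mathcal F}_u$ and the area/co-area formula. You instead argue directly on $\Gamma_u$: the resolvent $J=(I+\partial u)^{-1}$ of the maximal monotone operator $\partial u$ gives a global bi-Lipschitz parametrization $g(z)=(J(z),z-J(z))$ of $\Gamma_u$ by $\R^n$ (so rectifiability is immediate), and since $(x,y)\mapsto x+y$ inverts $g$, the Steiner formula at $s=1$ together with non-negativity yields $\binom{n}{i}\Theta_i(u,\eta)\le\hm^n(P_1(u,\eta))=\hm^n(g^{-1}(\eta))$, which is absolute continuity of $\Theta_i(u,\cdot)$ with respect to $\hm^n\llcorner\Gamma_u$; Radon--Nikodym then produces the density. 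Your version is more self-contained and elementary -- it needs from \cite{Colesanti-Hug-2000b} only the existence of the measures (i.e.\ Theorem~\ref{CH}), not the representation on the normal bundle -- at the price of giving a purely abstract density rather than the concrete one in terms of generalized curvatures. For the purposes of this paper that is no loss: only existence, non-negativity and $\hm^n$-measurability of $\theta_i(u,\cdot)$ are used later (e.g.\ in Proposition~\ref{help!!} and in the proof of Theorem~\ref{theorem Hessian valuations}, where the local identities for the densities follow from equality of the measures plus a.e.\ uniqueness of Radon--Nikodym derivatives). You correctly flag that the explicit identification of the density is the hard analytic content of \cite{Colesanti-Hug-2000b}; your argument cleanly sidesteps it.
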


\begin{proof} The countable $n$-rectifiability of $\Gamma_u$ was observed in \cite[Section 3]{Colesanti-Hug-2000b}. 

The set $\epi(u)$ is a non-empty, closed, convex subset 
of $\R^{n+1}$, which does not coincide with $\R^{n+1}$ itself. In particular, its {\em normal bundle}, 
$\nor(\epi(u))$, is well defined:
$$
\nor(\epi(u))=\{(p,q)\in\R^{n+1}\times\R^{n+1}\colon p\in\bd(\epi(u)),\ q\in N(\epi(u),p)\},
$$
where $N(\epi(u),p)$ denotes the outer normal cone to $\epi(u)$ at $p$. Finally, we set
$$
{\mathcal F}_u=\nor(\epi(u))\cap \left(\{(x,u(x))\colon x\in\R^n\}\times{\mathbb S}^n\right).
$$
\goodbreak
In \cite[Section 3]{Colesanti-Hug-2000b} the following facts are proved:
\begin{enumerate}[(i)]
\item ${\mathcal F}_u$ is a countably $n$-rectifiable Borel set;
\item there exists an homeomorphism $T\colon\Gamma_u \to {\mathcal F}_u$; moreover $T$ and its inverse are locally
Lipschitz.
\end{enumerate}

By \cite[Theorem 3.1]{Colesanti-Hug-2000b}, each Hessian measure of $u$ admits an integral representation on ${\mathcal F}_u$: there exist $\hm^n$-measurable, non-negative functions $\bar\theta_i(u,\cdot)\colon{\mathcal F}_u \to \R$, such that
\begin{equation}\label{int rep 2}
\Theta_i(u,\eta)=\int_{\bar\eta\cap{\mathcal F}_u}\bar\theta_i(u,(\bar x, \bar y)\d\hm^n(\bar x, \bar y),
\end{equation}
for every $\eta\in\Borel(\R^{2n})$ and $i\in\{0,\dots,n\}$, where
$\bar\eta=T(\eta\cap \Gamma_u)$. Now \eqref{int rep} follows from performing the change of variables $(\bar x, \bar y)=T(x,y)$ in \eqref{int rep 2} and using the co-area formula (see for instance 
\cite{Federer}).
\end{proof}

\goodbreak
Next we extend the previous result to the general case.

\begin{theorem}\label{thm integral representation} If $u\in\fconvx$, then $\Gamma_u$ is a countably $n$-rectifiable set. Moreover,  for  $i\in\{0,\dots,n\}$,
there exists a non-negative, $\hm^n$-measurable function $\theta_i(u,\cdot)\colon\Gamma_u\ \rightarrow\, \R$
such that
\begin{equation}\label{int rep}
\Theta_i(u,\eta)=\int_{\eta\cap\Gamma_u}\theta_i(u,(x,y))\d\hm^{n}(x,y).
\end{equation}
for every $\eta\in\Borel(\R^{2n})$.
\end{theorem}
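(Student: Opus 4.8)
The plan is to reduce the general case $u\in\fconvx$ to the full-dimensional domain case already settled in Proposition \ref{proposition integral representation}, by passing to the affine hull of $\dom(u)$ and applying that proposition in the ambient space of the correct dimension. First I would set $U=\relint(\dom(u))$, let $E$ be the affine hull of $\dom(u)$, say $\dimension(E)=k$, and choose coordinates so that $E=F\times\{c\}$ with $F$ a $k$-dimensional linear subspace (after a translation we may take $c=0$). Writing $u_0$ for the restriction of $u$ to $F$, viewed as a function in $\mbox{\rm Conv}(\R^k)$, one has $\interno_F(\dom(u_0))=U\neq\emptyset$, so Proposition \ref{proposition integral representation} applies to $u_0$ on $\R^k$: $\Gamma_{u_0}\subset\R^k\times\R^k$ is countably $k$-rectifiable and the Hessian measures $\Theta_i(u_0,\cdot)$ on $\R^{2k}$ admit integral representations with densities $\theta_i(u_0,\cdot)$ against $\hm^k$.

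Next I would relate $\Gamma_u$ to $\Gamma_{u_0}$. For $x\in F$ one has $\partial u(x)=\partial u_0(x)+F^\perp$ when $x\in U$ (the directions orthogonal to $E$ are unconstrained since $u=+\infty$ off $E$), while for $x\in\dom(u)\setminus U$ the subdifferential is a larger translate of a cone; in all cases $\partial u(x)\ne\emptyset$ forces $x\in\dom(u)\subset E$. The key geometric point is that the part of $\Gamma_u$ lying over boundary points of $\dom(u)$ contributes nothing to the Hessian measures, since by Remark \ref{support of Hessian measures} and more precisely because $P_s(u,\eta)$ over such $x$ is contained in a set of dimension $<n$ (it is built from a $<k$-dimensional set of base points, $k\le n$, plus shifts, and the relevant Hausdorff measure vanishes) — so one may restrict attention to $\eta\cap(U\times\R^n)$. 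Over $U$, the map $(x,y_0)\mapsto(x,y_0)$ together with a parametrization of the $F^\perp$-fibers exhibits $\Gamma_u\cap(U\times\R^n)$ as (locally) a product of a countably $k$-rectifiable set with an $(n-k)$-dimensional affine piece, hence countably $n$-rectifiable; to handle the boundary part one covers it by countably many pieces each of which is Lipschitz-parametrized from an open subset of $\R^{n-1}$ (graphs of subdifferentials over lower-dimensional faces), and a countably $(n-1)$-rectifiable set is a fortiori countably $n$-rectifiable. Taking the union establishes the rectifiability claim.

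For the integral representation I would use the known local Steiner formula in the relative interior: by Theorem \ref{CH} applied to $u_0$ on $\R^k$, combined with the splitting $P_s(u,\eta)=\{x+sy:\ (x,y)\in\eta\cap\Gamma_u\}$ over $U$, one expresses $\hm^n(P_s(u,\eta))$ as an integral over $\eta\cap\Gamma_{u_0}$ of the $\hm^{n-k}$-measure of the fiber $\{y_0+s y^\perp:\ y^\perp\in F^\perp,\ (x,y_0+y^\perp)\in\eta\}$ against $\d\Theta_i(u_0,\cdot)$-type weights; matching coefficients of powers of $s$ and invoking Fubini on the rectifiable set $\Gamma_u\cap(U\times\R^n)$ with respect to $\hm^n$ (valid since $\Gamma_u$ is now known to be countably $n$-rectifiable) produces the density $\theta_i(u,\cdot)$. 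Since the boundary part carries zero $\Theta_i$-mass, extending $\theta_i(u,\cdot)$ by $0$ there gives \eqref{int rep} for all $\eta\in\Borel(\R^{2n})$. The main obstacle I anticipate is the careful bookkeeping at the boundary of $\dom(u)$: one must verify that $\Theta_i(u,\eta)=0$ whenever $\pi_1(\eta\cap\Gamma_u)\subset\bd_E(\dom(u))$, which requires a dimension count on $P_s(u,\eta)$ for such $\eta$ (a face of dimension $j<k$ contributes base points of dimension $j$ and, even after adding the $(n-j)$-dimensional normal cone at an $s>0$ shift, one only gets up to dimension $n-1$ worth of genuinely new directions in a way that keeps $\hm^n$ null); making this rigorous — rather than merely plausible — is where the real work lies, and it is best done by the same normal-bundle argument as in Proposition \ref{proposition integral representation}, now applied to $\epi(u)$ directly in $\R^{n+1}$ and observing that the portion of $\nor(\epi(u))$ projecting to $\bd(\epi(u))\setminus\operatorname{graph}(u|_U)$ has $\hm^n$-measure-zero image under the relevant projection.
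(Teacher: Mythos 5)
Your reduction has a fatal flaw at its ``key geometric point'': it is simply not true that the part of $\Gamma_u$ lying over the (relative) boundary of $\dom(u)$ contributes nothing to the Hessian measures. Take $u=I_K$ for a convex body $K$ with nonempty interior, so the affine hull is all of $\R^n$ and your reduction to the affine hull changes nothing. Then $\partial u(x)=\{0\}$ for $x\in\interno(K)$ while $\partial u(x)=N(K,x)$ for $x\in\bd(K)$, and for $\eta=\bd(K)\times \Bn$ one has $P_s(u,\eta)\supset (K+s\Bn)\setminus\interno(K)$, whose Lebesgue measure is $\sum_{i\ge1}\binom{n}{i}s^i W_i(K)>0$. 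Hence $\Theta_i(u,\cdot)$ for $i<n$ is concentrated \emph{precisely} over $\bd(\dom(u))$ and recovers the support measures of $K$ (this is exactly the link to convex bodies exploited in Section \ref{The space of 1-homogeneous convex functions}). Your dimension count is the culprit: a $j$-dimensional face plus its $(n-j)$-dimensional normal cone sweeps out a set of full dimension $j+(n-j)=n$, not $n-1$, so neither the claim ``$\Theta_i(u,\eta)=0$ whenever $\pi_1(\eta\cap\Gamma_u)\subset\bd_E(\dom(u))$'' nor the proposed normal-bundle rescue (that the boundary portion of $\nor(\epi(u))$ has $\hm^n$-null image) can be made rigorous --- they are false. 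A second, independent problem is that Proposition \ref{proposition integral representation} requires $\dom(u_0)=\R^k$, i.e.\ an everywhere finite function; knowing only that $\interno_F(\dom(u_0))\ne\emptyset$ is the hypothesis of Theorem \ref{CH}, which yields measures only on $U\times\R^k$ and does not give you the representation on all of $\R^{2k}$. So your very first step already does not apply the proposition you invoke.

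The paper avoids both issues by regularizing in the \emph{gradient} variable rather than restricting in the \emph{space} variable: for $r>0$ the Lipschitz regularization $u_r=\reg{u}{r}$ is finite on all of $\R^n$ (so Proposition \ref{proposition integral representation} applies to it verbatim, with no boundary of the domain to worry about), and by Propositions \ref{help!} and \ref{help!!} one has $\Gamma_u^r\subset\Gamma_{u_r}$ and $\Theta_i(u,\eta)=\Theta_i(u_r,\eta)$ for every Borel $\eta\subset\Gamma_u\cap(\R^n\times\Bn_r)$. Rectifiability of $\Gamma_u=\bigcup_{r>0}\Gamma_u^{r}$ follows from that of each $\Gamma_{u_r}$, the densities $\theta_i(u_r,\cdot)$ are consistent as $r$ increases and glue to a density $\theta_i(u,\cdot)$ on $\Gamma_u$, and letting $r\to+\infty$ gives \eqref{int rep}. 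If you want to salvage your approach, you would have to carry the boundary contribution along honestly (as the support-measure structure does for indicator functions), which amounts to redoing the normal-bundle analysis of \cite{Colesanti-Hug-2000b} for non-finite $u$; the regularization argument is the short way around that.
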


\begin{proof} 
For $r>0$, set $u_r=\reg{u}{r}$ and note that  $\dom(u_r)=\R^n$. By Proposition \ref{help!}, $\Gamma^{1/r}_u\subset\Gamma_{u_r}$.
By Proposition \ref{proposition integral representation}, the set $\Gamma_{u_r}$ is countably $n$-rectifiable, and hence the same 
conclusion is valid for $\Gamma^{1/r}_u$. As
$$
\Gamma_u=\bigcup\nolimits_{r>0}\Gamma_u^{1/r}
$$
we deduce that $\Gamma_u$ is countably $n$-rectifiable as well.

Let us fix $i\in\{0,\dots,n\}$.  As in the proof of Theorem \ref{Thm Hessian measures}, we have 
$P_s(u,\eta)=P_s(u_r,\eta)$ for every Borel set $\eta\subset \Gamma_u\cap(\R^n\times \Bn_r)$.
So, in particular 
$$
\Theta_i(u,\eta)=\Theta_i(u_r,\eta)
$$
for every $\eta\in\Borel(\R^{2n})$ contained in $\Gamma_u\cap(\R^n\times \Bn_r)$.
By Proposition \ref{proposition integral representation}, there exists a non-negative $\hm^n$-measurable function $\theta_i(u_r,\cdot)$ defined on 
$\Gamma_{u_r}$ such that
$$
\Theta_i(u,\eta)=\Theta_i(u_r,\eta)=\int_{\eta\cap\Gamma_{u_r}}\theta_i(u_r,(x,y))\d\hm^n(x,y)=
\int_{\eta\cap\Gamma_u}\theta_i(u_r,(x,y))\d\hm^n(x,y).
$$
Clearly for $r'\ge r$, the function $\theta_i(u_{r'},\cdot)$ extends $\theta_i(u_r,\cdot)$. Hence there is a function $\theta_i(u,\cdot)$, defined on $\Gamma_u$, such that
\eqref{int rep} holds for every $\eta\in\Borel(\R^{2n})$ contained in $\R^n\times \Bn_r$. The conclusion follows from letting $r\to+\infty$.
\end{proof}

\subsection{A second auxiliary result}

For future use, we state the following result that follows from the argument used in the proofs in the previous part of this section.

\begin{proposition}\label{help!!} Let $u\in\fconvx$. For $r>0$, the function $u_r =\reg{u}{r}$ has the following
properties.
\begin{enumerate}[\emph(i)]
\item For every $\eta\subset\Gamma_u^r$ and $i\in\{0,\dots,n\}$,
$$\Theta_i(u,\eta)=\Theta_i(u_r,\eta).$$
\item For $\hm^{n}$-a.e. $(x,y)\in\Gamma_u^r$ and $i\in\{0,\dots,n\}$, 
$$
\theta_i(u,(x,y))=\theta_i(u_r,(x,y)).
$$
\end{enumerate}
\end{proposition}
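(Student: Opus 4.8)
The plan is to deduce both statements directly from results already established in this section, so no new analytic work is really needed. For part (i), I would argue exactly as in the proof of Theorem~\ref{Thm Hessian measures}: fix $\eta\subset\Gamma_u^r$ and $s\ge0$. By Proposition~\ref{help!}\eqref{three} we have $P_s(u,\eta)=P_s(u_r,\eta)$ (here we use $\eta\subset\Gamma_u^r\subset\Gamma_{u_r}$, so $\eta\cap\Gamma_{u_r}=\eta$ as well). Applying the Steiner formula \eqref{Steiner} for $u$ and for $u_r$ to the same set $\eta$ gives
\begin{equation*}
\sum_{i=0}^n\binom nis^i\,\Theta_{n-i}(u,\eta)=\hm^n(P_s(u,\eta))=\hm^n(P_s(u_r,\eta))=\sum_{i=0}^n\binom nis^i\,\Theta_{n-i}(u_r,\eta)
\end{equation*}
for every $s\ge0$. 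Since two polynomials in $s$ that agree for all $s\ge0$ have equal coefficients, $\Theta_i(u,\eta)=\Theta_i(u_r,\eta)$ for all $i\in\{0,\dots,n\}$, which is (i). Strictly speaking one first checks the identity on Borel subsets of $\Gamma_u^r$; a general $\eta\subset\Gamma_u^r$ is handled by the same extension-by-monotonicity argument used to pass from $\Borel(\R^n\times\Bn_r)$ to $\Borel(\R^{2n})$ in Theorem~\ref{Thm Hessian measures}, or one simply restricts attention to Borel $\eta$, which is all that is needed in the sequel.

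For part (ii), I would combine (i) with the integral representation. By Theorem~\ref{thm integral representation} applied to $u$, and by Proposition~\ref{proposition integral representation} applied to $u_r$ (whose domain is $\R^n$), for every Borel $\eta\subset\Gamma_u^r$ and every $i$,
\begin{equation*}
\int_{\eta}\theta_i(u,(x,y))\d\hm^n(x,y)=\Theta_i(u,\eta)=\Theta_i(u_r,\eta)=\int_{\eta}\theta_i(u_r,(x,y))\d\hm^n(x,y),
\end{equation*}
using that $\eta\cap\Gamma_u=\eta=\eta\cap\Gamma_{u_r}$. Thus the two non-negative $\hm^n$-measurable functions $\theta_i(u,\cdot)$ and $\theta_i(u_r,\cdot)$ have equal integrals over every Borel subset of $\Gamma_u^r$; since $\Gamma_u^r$ is countably $n$-rectifiable, hence $\hm^n$-$\sigma$-finite on it, this forces $\theta_i(u,(x,y))=\theta_i(u_r,(x,y))$ for $\hm^n$-a.e. $(x,y)\in\Gamma_u^r$, which is (ii).

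The only mild subtlety — and the one place warranting care rather than real difficulty — is matching up the domains of the various functions: the $\theta_i$'s in Theorem~\ref{thm integral representation} were themselves constructed as extensions of the $\theta_i(u_s,\cdot)$'s, so one should make sure the ``$u_r$'' of the present proposition is consistent with that construction (it is: by the nested compatibility $\theta_i(u_{r'},\cdot)$ extends $\theta_i(u_r,\cdot)$ for $r'\ge r$, noted in the proof of Theorem~\ref{thm integral representation}, the restriction of $\theta_i(u,\cdot)$ to $\Gamma_u^r$ is precisely $\theta_i(u_r,\cdot)|_{\Gamma_u^r}$ up to $\hm^n$-null sets). I do not expect any genuine obstacle; the statement is essentially a bookkeeping corollary of Propositions~\ref{help!} and~\ref{proposition integral representation}, Theorems~\ref{Thm Hessian measures} and~\ref{thm integral representation}, and the uniqueness of Steiner-polynomial coefficients and of Radon--Nikodym-type densities on a $\sigma$-finite rectifiable set.
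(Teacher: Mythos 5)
Your proposal is correct and follows essentially the route the paper intends: the paper states this proposition without a written proof, remarking only that it "follows from the argument used in the proofs in the previous part of this section," and those arguments are precisely the ones you spell out — Proposition \ref{help!}\eqref{three} plus comparison of Steiner coefficients for (i), and the integral representations together with a.e.\ uniqueness of densities on the $\sigma$-finite rectifiable set $\Gamma_u^r$ for (ii). Indeed, the identity $\Theta_i(u,\eta)=\Theta_i(u_r,\eta)$ for Borel $\eta\subset\Gamma_u\cap(\R^n\times \Bn_r)$ and the nesting of the densities $\theta_i(u_{r'},\cdot)$ are exactly what appear in the proofs of Theorems \ref{Thm Hessian measures} and \ref{thm integral representation}, so your write-up is a faithful expansion of the paper's implicit argument.
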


\section{Further Properties of Hessian Measures}\label{Further properties of Hessian measures}

\subsection{Invariance and covariance properties}

%\subsubsection{Translations and rotations}

Let $u\in\fconvx$ and $x_0\in\R^n$; we denote by $u_{x_0}$ the composition of $u$ and the translation by $x_0$; that is,
$$
u_{x_0}(x)=u(x-x_0)
$$
for $x\in\R^n$. Clearly, $u_{x_0}\in\fconvx$. 
Let $\phi$ be an element of $\O(n)$. Given $u\in\fconvx$, we define $u_{\phi}\colon\R^n\to \R^n$ by
$$
u_{\phi}(y)=u(\phi^{-1}y)
$$
for $y\in\R^n$. As before, we have $u_{\phi}\in{\fconvx}$.  For $\phi\in\O(n)$, we also define $\tilde \phi\colon\R^n\times\R^n\to\R^n\times\R^n$ by $\tilde \phi(x,y)=(\phi x,\phi y)$.

\medskip
We establish the following covariance properties of Hessian measures.

\begin{proposition}\label{behavior Hessian measures}
Let $u\in{\fconvx}$. For $x_0\in\R^n$ and  $\phi\in\O(n)$,
\begin{eqnarray*}
&&\Theta_i(u_{x_0},\eta)=\Theta_i(u,\eta+(x_0,0)),\\
&&\Theta_i(u_\phi,\eta)=\Theta_i(u,{\tilde \phi}^{-1}\eta).
\end{eqnarray*}
for every $\eta \in\Borel(\R^n\times\R^n)$ and  $i\in\{0,\dots,n\}$.
\end{proposition}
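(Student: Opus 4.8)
The plan is to reduce the covariance statements to the defining Steiner formula \eqref{Steiner}, exploiting the fact that both sides of each claimed identity are determined as coefficients of a polynomial in $s$, so it suffices to verify the corresponding identity for the Hausdorff measure of the parallel sets $P_s$. First I would record how the subgradient graph transforms under the two operations. For a translation, $y\in\partial u_{x_0}(x)$ holds if and only if $y\in\partial u(x-x_0)$, so $\Gamma_{u_{x_0}}=\Gamma_u+(x_0,0)$. For a rotation $\phi\in\O(n)$, since $\phi$ is orthogonal one checks directly from the subgradient inequality $u_\phi(z)\ge u_\phi(x)+\langle z-x,y\rangle$ (substituting $z\mapsto\phi z$ and using $\langle\phi z-\phi x,y\rangle=\langle z-x,\phi^{-1}y\rangle$) that $y\in\partial u_\phi(x)$ iff $\phi^{-1}y\in\partial u(\phi^{-1}x)$; equivalently $\Gamma_{u_\phi}=\tilde\phi(\Gamma_u)$.

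Next I would compute the parallel sets. For the translation, given $\eta\in\Borel(\R^{2n})$,
\[
P_s(u_{x_0},\eta)=\{x+sy\colon (x,y)\in\eta\cap\Gamma_{u_{x_0}}\}
=\{x+sy\colon (x,y)\in\eta\cap(\Gamma_u+(x_0,0))\}.
\]
Writing $(x,y)=(x'+x_0,y')$ with $(x',y')\in\Gamma_u$ and $(x'+x_0,y')\in\eta$, i.e. $(x',y')\in(\eta-(x_0,0))\cap\Gamma_u=(\eta+(-x_0,0))\cap\Gamma_u$, we get $x+sy=x'+sy'+x_0$, so $P_s(u_{x_0},\eta)=P_s(u,\eta+(-x_0,0))+x_0$. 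Since $\hm^n$ is translation invariant, $\hm^n(P_s(u_{x_0},\eta))=\hm^n(P_s(u,\eta+(-x_0,0)))$ for all $s\ge0$. Wait — I must match the sign in the statement, which has $\eta+(x_0,0)$; this is just a matter of how the translation $u_{x_0}(x)=u(x-x_0)$ shifts things, and a careful bookkeeping (note $\eta-(x_0,0)$ in my computation) should reproduce exactly $\Theta_i(u_{x_0},\eta)=\Theta_i(u,\eta+(x_0,0))$ once one is consistent; I would double-check the direction against $\Gamma_{u_{x_0}}=\Gamma_u+(x_0,0)$ and adjust. Comparing coefficients of $s^i$ in \eqref{Steiner} then yields the first identity for every $i$.

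For the rotation, similarly
\[
P_s(u_\phi,\eta)=\{x+sy\colon (x,y)\in\eta\cap\tilde\phi(\Gamma_u)\}
=\{\phi x'+s\phi y'\colon (x',y')\in\Gamma_u,\ \tilde\phi(x',y')\in\eta\}
=\phi\big(P_s(u,\tilde\phi^{-1}\eta)\big),
\]
using linearity of $\phi$ and $\phi x'+s\phi y'=\phi(x'+sy')$. Since $\phi\in\O(n)$ preserves $\hm^n$, we obtain $\hm^n(P_s(u_\phi,\eta))=\hm^n(P_s(u,\tilde\phi^{-1}\eta))$ for all $s\ge0$, and comparing the coefficients of $s^i$ in the Steiner formula gives $\Theta_i(u_\phi,\eta)=\Theta_i(u,\tilde\phi^{-1}\eta)$ for $i=0,\dots,n$. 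I do not anticipate a genuine obstacle here; the only delicate point is the sign/direction bookkeeping in the translation case (and verifying measurability of the transformed sets, which is immediate since $\eta\mapsto\eta+(x_0,0)$ and $\eta\mapsto\tilde\phi^{-1}\eta$ are Borel isomorphisms), together with the standard fact that two polynomials in $s\ge0$ with equal values on $[0,\infty)$ have equal coefficients, which licenses reading off the $\Theta_i$ from the equality of the $\hm^n(P_s(\cdot,\cdot))$.
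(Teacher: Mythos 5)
Your proposal is correct and follows essentially the same route as the paper: transform $\Gamma_u$ under the two operations, deduce $P_s(u_{x_0},\eta)=P_s(u,\eta-(x_0,0))+x_0$ and $P_s(u_\phi,\eta)=\phi\big(P_s(u,\tilde\phi^{-1}\eta)\big)$, and then compare coefficients in the Steiner formula using the rigid motion invariance of $\hm^n$. On the sign you were unsure about: your bookkeeping yielding $\eta-(x_0,0)$ is the right one and agrees with the paper's own computation in its proof; the $\eta+(x_0,0)$ appearing in the statement of the proposition is a sign slip in the paper, not an error on your part.
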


\begin{proof}  For every $x\in\R^n$, the definition of the subdifferential implies that $\partial u_{x_0}(x)=\partial u(x-x_0)$ and therefore
$\Gamma_{u_{x_0}}=\Gamma_u+(x_0,0)$.
Consequently,
\begin{eqnarray*}
P_s(u_{x_0},\eta)&=&\{x+s y\colon (x,y)\in\eta\cap\Gamma_{u_{x_0}}\}\\
&=&\{x+s y\colon (x,y)\in\eta \cap (\Gamma_{u}+(x_0,0))\}\\
&=&\{x+s y\colon (x,y)\in((\eta-(x_0,0))\cap \Gamma_u)+(x_0,0)\}\\
&=&\{x+s y\colon (x,y)\in(\eta-(x_0,0))\cap \Gamma_u\}+x_0\\
&=&P_s(u,\eta-(x_0,0))+x_0.
\end{eqnarray*}
Similarly,  $\phi^{-1} \partial u_\phi(x)=\partial u(\phi^{-1}x)$ and therefore $\Gamma_{u_\phi}=\tilde \phi(\Gamma_u)$, which implies that
\begin{eqnarray*}
P_s(u_\phi,\eta)&=&\{x+s y\colon (x,y)\in\eta \cap \Gamma_{u_\phi}\}\\
&=&\{x+s y\colon (x,y)\in \eta\cap\tilde \phi\, \Gamma_{u}\}\\
&=&\{x+s y\colon (x,y)\in\tilde \phi({\tilde \phi}^{-1}\eta \cap \Gamma_{u})\}\\
&=&\phi(P_s(u,{\tilde \phi}^{-1}\eta)).
\end{eqnarray*}
The conclusions follow from taking the Lebesgue measure of $P_s(u_{x_0},\eta)$ and of $P_s(u_\phi,\eta)$, respectively, and from using the Steiner type formula
\eqref{Steiner} and the invariance of the Lebesgue measure under rotations and translations. 
\end{proof}

%\subsection{Addition of affine functions} Recall that $\faff$ denotes the space of affine functions  on $\R^n$. Note that
%$u\in\fconvx$ and $w\in\faff$ imply $u+w\in\fconvx$. If $w\in\faff$, then its gradient is constant, and it will be simply denoted 
%by $\nabla w$.
%\begin{proposition} Let $u\in\fconvx$. For every $w\in\faff$ and $i\in\{0,\dots,n\}$, we have
%$$
%\Theta_i(u+w,\eta)=\Theta_i(u,\eta+(\nabla w,0))
%$$
%for $\eta\in\Borel(\R^{2n})$.
%\end{proposition}
%\begin{proof} For every $x\in\R^n$ we have
%$\partial(u+w)(x)=\partial u(x)+\nabla w$, so that
%$$
%\Gamma_{u+w}=\Gamma_u+(0,\nabla w)
%$$
%and
%$$
%\eta \cap \Gamma_{u+w}=\eta \cap (\Gamma_u+(0,\nabla w))=
%((\eta-(0,\nabla w)\cap \Gamma_u)+(0,\nabla w)
%$$
%for $\eta\in\Borel(\R^{2n})$. In particular, for every $\eta\in\Borel(\R^{2n})$,
%$$
%P_s(u+w,\eta)=P_s(u,\eta-(0,\nabla w))+(0,\nabla w).
%$$
%The conclusion follows from taking Lebesgue measures and using \eqref{Steiner}.
%\end{proof}
%The following results states that the Hessian measures of $u$ are unchanged if a constant is added to $u$.
%\begin{corollary}\label{vertical invariance} Let $u\in\fconvx$. If $c\in\R$ and $i\in\{0,\dots,n\}$, then
%$
%\Theta_i(u+c,\cdot)\equiv\Theta_i(u,\cdot)$.
%\end{corollary}

\subsection{Hessian measures of $u$ and $u^*$}
Let $T\colon\R^{2n}\to\R^{2n}$ be defined by $T(x,y)=(y,x)$.
Given a subset $\eta$ of $\R^{2n}$, we set
$$
\hat\eta=\{(x,y)\in\R^n\times\R^n\colon (y,x)\in\eta\}=T(\eta).
$$
If $u\in\fconvx$, then its conjugate function $u^*$ belongs to $\fconvx$ as well. Moreover, by Lemma~\ref{le:conjugate_subgradient}, for every $(x,y)\in\R^n\times\R^n$, we have
$y\in\partial u(x)$ if and only if $x\in\partial u^*(y)$.
Hence
$$
\Gamma_u=T(\Gamma_{u^*}).
$$
Similarly,
$$
\Gamma_u\cap\eta=T(\Gamma_{u^*}\cap\hat\eta)
$$
for all $\eta\in\Borel(\R^{2n})$. We easily deduce the following result, already observed in \cite[Theorem 5.8]{Colesanti-Hug-2000}.

\begin{theorem}\label{duality} For $u\in\fconvx$, 
$$
\Theta_i(u,\eta)=\Theta_{n-i}(u^*,\hat\eta)
$$
for every $i=0,\dots,n$ and  $\eta\in\Borel(\R^{2n})$.
\end{theorem}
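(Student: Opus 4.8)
The plan is to deduce the identity directly from the Steiner-type formula \eqref{Steiner} together with the two facts established just above the statement: that $\Gamma_u = T(\Gamma_{u^*})$ and, more precisely, that $\Gamma_u \cap \eta = T(\Gamma_{u^*} \cap \hat\eta)$ for every $\eta \in \Borel(\R^{2n})$. Since the Hessian measures are the (unique) coefficients of the polynomial $s \mapsto \hm^n(P_s(u,\eta))$, it suffices to relate the parallel sets $P_s(u,\eta)$ and $P_s(u^*,\hat\eta)$ in a way that, after taking $\hm^n$, exhibits the coefficients of one expansion as the reversed coefficients of the other.

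First I would compute, for $s>0$, the parallel set of $u$ in terms of that of $u^*$. If $(x,y) \in \Gamma_u \cap \eta$, then $(y,x) \in \Gamma_{u^*} \cap \hat\eta$, and $x + sy = s\,(y + \tfrac1s x)$; conversely every point of $P_{1/s}(u^*,\hat\eta)$ arises this way. Hence
\begin{equation*}
P_s(u,\eta) = s \cdot P_{1/s}(u^*,\hat\eta)
\end{equation*}
for $s>0$, and therefore $\hm^n(P_s(u,\eta)) = s^n\, \hm^n(P_{1/s}(u^*,\hat\eta))$. Now expand both sides by \eqref{Steiner}: the left side is $\sum_{i=0}^n \binom{n}{i} s^i\, \Theta_{n-i}(u,\eta)$, while the right side is
\begin{equation*}
s^n \sum_{i=0}^n \binom{n}{i}\Big(\tfrac1s\Big)^i \Theta_{n-i}(u^*,\hat\eta) = \sum_{i=0}^n \binom{n}{i} s^{n-i}\,\Theta_{n-i}(u^*,\hat\eta) = \sum_{j=0}^n \binom{n}{j} s^{j}\,\Theta_{j}(u^*,\hat\eta),
\end{equation*}
using the substitution $j = n-i$ and $\binom{n}{i}=\binom{n}{n-i}$. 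Comparing the coefficients of the two polynomials in $s$ (which are equal for all $s>0$, hence identically), one reads off $\Theta_{n-i}(u,\eta) = \Theta_{n-i}(u^*,\hat\eta)$ for each $i$... which is not quite the claim; re-indexing carefully, matching the coefficient of $s^i$ on the left, namely $\binom{n}{i}\Theta_{n-i}(u,\eta)$, with the coefficient of $s^i$ on the right, namely $\binom{n}{i}\Theta_{i}(u^*,\hat\eta)$, yields $\Theta_{n-i}(u,\eta) = \Theta_i(u^*,\hat\eta)$, equivalently $\Theta_i(u,\eta) = \Theta_{n-i}(u^*,\hat\eta)$ as stated.

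The only genuinely delicate point is handling $s = 0$, since the scaling identity $P_s(u,\eta) = s\cdot P_{1/s}(u^*,\hat\eta)$ degenerates there; but this is harmless, because the measures are already determined by the values $s>0$ (a polynomial in $s$ is determined by its values on $(0,\infty)$), so the $s=0$ case of \eqref{Steiner} need not be invoked. One should also note that $\hat\eta = T(\eta) \in \Borel(\R^{2n})$ since $T$ is a linear homeomorphism, so all the quantities above are well defined, and that $u^* \in \fconvx$ by Proposition \ref{Schneider proposition}. Since this is precisely the content of \cite[Theorem 5.8]{Colesanti-Hug-2000} in the finite case and the argument uses only \eqref{Steiner} and the graph identity $\Gamma_u\cap\eta = T(\Gamma_{u^*}\cap\hat\eta)$, both of which are now available in the present generality, the proof is complete.
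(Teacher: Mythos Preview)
Your argument is correct and is exactly the natural way to fill in what the paper leaves implicit: the paper merely records the graph identity $\Gamma_u\cap\eta=T(\Gamma_{u^*}\cap\hat\eta)$ and says the result follows easily (citing \cite[Theorem 5.8]{Colesanti-Hug-2000}), and your computation $P_s(u,\eta)=s\,P_{1/s}(u^*,\hat\eta)$ together with the Steiner formula \eqref{Steiner} is precisely how one makes that deduction explicit. The handling of $s=0$ and the remark that $\hat\eta$ is Borel are appropriate, so nothing is missing.
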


\subsection{The measures $\Theta_0$ and $\Theta_n$}  We consider the extremal cases $i=0$ and $i=n$ for Hessian measures.

\begin{proposition}\label{extreme cases} For every $u\in\fconvx$, 
$$ 
\Theta_n(u,\eta)=\hm^n(\pi_1(\eta\cap\Gamma_u)),\quad
\Theta_0(u,\eta)=\hm^n(\pi_2(\eta\cap\Gamma_u)),\quad
$$ 
for all $\eta\in\Borel(\R^{2n})$.
\end{proposition}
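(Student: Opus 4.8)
The plan is to read the identity for $\Theta_n$ straight off the Steiner formula \eqref{Steiner}, and then to obtain the identity for $\Theta_0$ from it by duality.

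\emph{The case $i=n$.} Evaluate \eqref{Steiner} at $s=0$. Every summand on the right-hand side with $i\ge 1$ carries a factor $s^i$ and hence vanishes, so only the term with $i=0$ survives and we are left with $\hm^n(P_0(u,\eta))=\Theta_n(u,\eta)$. Since $P_0(u,\eta)=\pi_1(\eta\cap\Gamma_u)$, as noted right after Definition \ref{parallel set}, this is exactly the asserted formula for $\Theta_n$.

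\emph{The case $i=0$.} Apply Theorem \ref{duality} with $i=0$ to get $\Theta_0(u,\eta)=\Theta_n(u^*,\hat\eta)$. By Proposition \ref{Schneider proposition} the conjugate $u^*$ again lies in $\fconvx$, so the identity just proved for $\Theta_n$ applies to $u^*$ and gives $\Theta_n(u^*,\hat\eta)=\hm^n(\pi_1(\hat\eta\cap\Gamma_{u^*}))$. It then remains to simplify the right-hand side: with $T(x,y)=(y,x)$ one has $\Gamma_{u^*}=T(\Gamma_u)$ and $\hat\eta=T(\eta)$, whence $\hat\eta\cap\Gamma_{u^*}=T(\eta\cap\Gamma_u)$; and since $\pi_1\circ T=\pi_2$, this yields $\pi_1(\hat\eta\cap\Gamma_{u^*})=\pi_2(\eta\cap\Gamma_u)$. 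Chaining the three displays gives $\Theta_0(u,\eta)=\hm^n(\pi_2(\eta\cap\Gamma_u))$.

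I do not expect any genuine obstacle along this route; the only point requiring attention is the bookkeeping of $T$, $\hat\eta$, and the two projections, all of which was already set up in the subsection on the Hessian measures of $u$ and $u^*$. If one wished to avoid invoking duality, one could instead try a direct argument: a rescaling gives $\hm^n(P_s(u,\eta))=s^n\hm^n(\{y+x/s:(x,y)\in\eta\cap\Gamma_u\})$, so dividing \eqref{Steiner} by $s^n$ and letting $s\to+\infty$ would formally produce $\Theta_0(u,\eta)=\hm^n(\pi_2(\eta\cap\Gamma_u))$. The real difficulty there would be to justify the limit of the Hausdorff measures of the sets $\{y+x/s:(x,y)\in\eta\cap\Gamma_u\}$, which are neither monotone nor nested in $s$; one would need a separate upper/lower semicontinuity argument (compactness when $\eta$ is compact, an exhaustion when $\eta$ is open) in the spirit of the proof of Theorem \ref{continuity}. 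The duality argument bypasses this altogether, which is why I would take it.
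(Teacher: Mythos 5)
Your proof is correct and follows essentially the same route as the paper: the case $i=n$ is read off the Steiner formula at $s=0$ together with $P_0(u,\eta)=\pi_1(\eta\cap\Gamma_u)$, and the case $i=0$ is deduced by applying this to $u^*$ via Theorem \ref{duality}. You merely spell out the bookkeeping with $T$, $\hat\eta$ and the projections, which the paper leaves implicit.
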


\begin{proof} We have $P_0(u,\eta)=\pi_1(\eta \cap\Gamma_u)$.
Hence  the first equality follows from \eqref{Steiner} with $s=0$. The second is obtained by applying the first one to $u^*$ and by Theorem \ref{duality}.
\end{proof}

A particular case of the previous result was already presented in \cite[p.\ 3248]{Colesanti-Hug-2000b}.

\subsection{Lower dimensional domains}

\begin{proposition}\label{theorem low dimension} 
If $\,u\in\fconvx$, then $\Theta_i(u,\cdot)\equiv0$ for every $i>\dimension(\dom(u))$. 
\end{proposition}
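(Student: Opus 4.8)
The plan is to reduce to the case of a function with nonempty interior of the domain, where we can invoke the integral representation machinery, and then argue that the Hessian measures $\Theta_i$ for $i > \dimension(\dom(u))$ are forced to vanish because the relevant parallel sets are too low-dimensional. Set $k = \dimension(\dom(u))$ and assume $k < n$, since otherwise there is nothing to prove. The key geometric observation is that for $u \in \fconvx$, if $y \in \partial u(x)$ then $x \in \dom(u)$, so $\pi_1(\Gamma_u) \subset \dom(u)$, which lies in a $k$-dimensional affine subspace. Moreover, the set of subgradients at points of $\relint(\dom(u))$ all contain (as an affine factor) the same $(n-k)$-dimensional orthogonal complement direction; the ``genuinely new'' directions available in the subgradients form only a $k$-dimensional piece, so $\Gamma_u$ itself should sit inside a set of dimension at most $n$ in a way that makes the top Hessian measures degenerate.

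The cleanest route is via the Steiner formula \eqref{Steiner} together with Remark~\ref{locally-finite-Hess-measures} and the rectifiability of $\Gamma_u$ from Theorem~\ref{thm integral representation}. First I would fix $r>0$ and pass to the Lipschitz regularization $u_r = \reg{u}{r}$; by Proposition~\ref{help!!}(i) we have $\Theta_i(u,\eta) = \Theta_i(u_r,\eta)$ for $\eta \subset \Gamma_u^r$, and since $\Theta_i(u,\cdot)$ is supported on $\Gamma_u = \bigcup_{r>0} \Gamma_u^{1/r}$ (Remark~\ref{support of Hessian measures}) and these measures are locally finite, it suffices to show $\Theta_i(u_r, \eta \cap \Gamma_u) = 0$ for every bounded Borel $\eta$ and every $i > k$. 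Now $u_r$ is finite on $\R^n$, but the crucial point is that $\Gamma_u \subset \dom(u) \times \R^n$ lies in an affine subspace of dimension $n+k$ inside $\R^{2n}$; more precisely, after an orthogonal change of coordinates we may assume $\dom(u) \subset \R^k \times \{0\}$, and then for $(x,y) \in \Gamma_u$ with $x \in \relint(\dom(u))$ the last $n-k$ coordinates of $y$ are determined (they are the slope of $u$ in the normal directions, which is affine in those directions), so $\Gamma_u$ restricted to $\relint(\dom(u)) \times \R^n$ actually lies in a $k$-dimensional set. Boundary points of $\dom(u)$ contribute an $\hm^n$-null set to $\Gamma_u$ by rectifiability and a dimension count.

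With this in hand, the integral representation \eqref{int rep} gives $\Theta_i(u,\eta) = \int_{\eta \cap \Gamma_u} \theta_i(u,(x,y)) \d\hm^n(x,y)$, and since $\Gamma_u$ has $\hm^n$-measure concentrated on its $n$-rectifiable part, I would instead argue directly from the Steiner formula: for any bounded $\eta$, $P_s(u,\eta) = \{x + sy : (x,y) \in \eta \cap \Gamma_u\}$ is contained in $\dom(u) + s\,\pi_2(\Gamma_u)$, and parametrizing $\eta \cap \Gamma_u$ over the $k$-dimensional base $\relint(\dom(u))$ shows that $P_s(u,\eta)$, for the part of $\eta$ over $\relint(\dom(u))$, is the image of a $k$-dimensional set under a Lipschitz map into $\R^n$, hence has $\hm^n$-measure zero when $k<n$; combined with $s^i$ being the coefficient of $\Theta_{n-i}$, comparing powers of $s$ forces $\Theta_{n-i}(u,\eta) = 0$ whenever $n - i > n - k$, i.e. $i < k$—equivalently $\Theta_j(u,\eta) = 0$ for $j > k$. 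The boundary part of $\dom(u)$ contributes an $\hm^n$-null set to each $P_s(u,\eta)$ by the rectifiability of $\Gamma_u$ (dimension of $\bd(\dom(u))$ is at most $k-1$) and does not affect the Hausdorff measure.

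The main obstacle I anticipate is handling the boundary of $\dom(u)$ carefully: a priori the subdifferential at a boundary point of $\dom(u)$ can be a large (up to $n$-dimensional) set, and one must confirm that the union of $\{x\} \times \partial u(x)$ over $x \in \bd(\dom(u))$ does not carry positive Hessian measure of order $> k$. This should follow from the rectifiability statement in Theorem~\ref{thm integral representation} (the relevant stratum has the wrong dimension) together with the locality/monotonicity properties already established, but it is the step that needs the most care, and I would phrase it by decomposing $\dom(u)$ into $\relint(\dom(u))$ and $\bd(\dom(u))$ and bounding the $P_s$-image of each piece separately. Everything else is a routine dimension count combined with the Steiner formula and the integral representation.
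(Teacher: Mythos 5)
Your overall strategy (pass to coordinates where $\dom(u)\subset\R^k\times\{0\}$ and compare powers of $s$ in the Steiner formula) is the right one and is essentially the paper's, but the central dimension count is false, and the error would prove too much. You claim that for $x\in\relint(\dom(u))$ the last $n-k$ coordinates of a subgradient $y\in\partial u(x)$ are ``determined''. They are not: since $u\equiv+\infty$ off the affine hull $E$ of $\dom(u)$, the subgradient inequality imposes no constraint in the directions of $E^{\perp}$, so in fact $\partial u(x',0)=\partial u_E(x')\times\R^{n-k}$, where $u_E$ is the restriction of $u$ to $E$. Consequently $\Gamma_u\cong\Gamma_{u_E}\times\R^{n-k}$ is $n$-rectifiable, not $k$-dimensional, and for $s>0$ the parallel set $P_s(u,\eta)=\{(x'+sy',\,sy'')\}$ is a product of a $k$-dimensional set with an $(n-k)$-dimensional one and has positive $\hm^n$-measure in general. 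A concrete counterexample to your key claim: $u=I_{\{0\}}$ on $\R$ gives $P_s(u,\{0\}\times[0,1])=[0,s]$, of measure $s$, so $\Theta_0(u,\{0\}\times[0,1])=1\neq 0$. If $\hm^n(P_s(u,\eta))$ were zero for all $s$, the Steiner formula would force \emph{all} $\Theta_i(u,\eta)$ to vanish, including $\Theta_0,\dots,\Theta_k$, contradicting Proposition \ref{extreme cases}.

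The correct consequence of the product structure is not that $\hm^n(P_s(u,\eta))$ vanishes but that, for product Borel sets $\eta=\alpha'\times\alpha''\times\beta'\times\beta''$ with $0\in\alpha''$, one has $\hm^n(P_s(u,\eta))=s^{n-k}\,\hm^{n-k}(\beta'')\,\hm^k(P_s(u_E,\alpha'\times\beta'))$, a polynomial in $s$ with no terms of degree less than $n-k$; matching coefficients against $\sum_{i}\binom{n}{i}s^i\,\Theta_{n-i}(u,\eta)$ kills exactly $\Theta_{k+1},\dots,\Theta_n$ and nothing more, and non-negativity lets you pass from product sets to arbitrary Borel sets. (Your index bookkeeping in the comparison step is also off: ``$n-i>n-k$, i.e.\ $i<k$'' should be $i<n-k$, equivalently $\Theta_j=0$ for $j=n-i>k$.) Finally, the boundary of $\dom(u)$, which you single out as the main obstacle, is a non-issue: the identity $\partial u(x',0)=\partial u_E(x')\times\R^{n-k}$ holds at every point of $E$, so the product computation covers all of $\Gamma_u$ at once and no separate treatment of relative boundary points is needed.
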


\begin{proof} Let $\dimension(\dom(u))=k$. There exists an affine subspace $E$ of $\R^n$ of dimension $k$, such that $\dom(u)\subset E$. Without loss of generality, we may assume
that
$$
E=\{(x',0)\colon x'\in\R^k\}, 
$$
where $0$ stands for the zero element of $\R^{n-k}$. Let $F$ be the orthogonal complement of $E$, that is,
$$
F=\{(0,x'')\colon x''\in\R^{n-k}\}.
$$
Define $u_E\colon\R^k\to\R\cup\{+\infty\}$ as the restriction of $u$ to $E$, that is, $u_E(x')=u(x',0)$ for $x'\in\R^k$. It follows 
from $\dom(u)\subset E$ that $u_E\in\fconvxk$. It is not hard to check that, for every $x'\in\R^k$,
$$
\partial u(x',0)=\{(y',y'')\colon y'\in\partial u_E(x'),\ y''\in\R^{n-k}\}.
$$
Hence 
$$
\Gamma_u=\{((x',0),(y',y''))\colon x'\in\R^k,\ y'\in\partial u_E(x'),\ y''\in\R^{n-k}\}. 
$$
Now let $\alpha',\beta'\in\Borel(\R^k)$ and  $\alpha'', \beta''\in\Borel(\R^{n-k})$ with $0\in\alpha''$. Set
$$
\eta=\alpha'\times\alpha''\times\beta'\times\beta''\in\Borel(\R^{2n}).
$$
For $s\ge0$, we obtain
\begin{eqnarray*}
P_s(u,\eta)&=&\{(x'+s y',s y'')\colon (x',y')\in\Gamma_{u_E}\cap(\alpha'\times\beta'),\ y''\in\beta''\}\\
&=&P_s(u_E,\alpha'\times\beta')\times s\beta''.
\end{eqnarray*}
Taking Lebesgue measures on both sides, we get
$$
\hm^n(P_s(u,\eta))=s^{n-k}\ \hm^k(P_s(u_E, \alpha'\times\beta')).
$$
This implies that the polynomial expansion of $\hm^n(P_s(u,\eta))$ does not contain any term of order lower than $(n-k)$. Thus
$$
\Theta_{k+1}(u,\eta)=\dots=\Theta_{n}(u,\eta)=0.
$$
The conclusion now follows from the arbitrariness of $\alpha', \alpha'', \beta', \beta''$ and the non-negativity of Hessian measures.
\end{proof}

\goodbreak
\subsection{Smooth functions}\label{smooth functions} For $u\in\fconvx\cap C^2(\R^n)$, we have
\begin{equation}\label{Gamma for smooth functions}
\Gamma_u=\{(x,\nabla u(x))\colon x\in\R^n\},
\end{equation}
and
$$
P_s(u,\beta\times \R^n)=\{x+s \,\nabla u(x)\colon x\in \beta \times \R^n\}
$$
for every $\beta\in\Borel(\R^{n})$ and $s\ge0$.
Hence
\begin{eqnarray*}
\hm^n(P_s(u, \beta\times \R^n))&=&\int_{P_s(u,\beta\times \R^n)}\d z
=\int_{\beta}\det(I_n+s\Hess u(x))\d x,
\end{eqnarray*}
by the change of variables $z=x+s\,\nabla u(x)$. On the other hand:
$$
\det(I_n+s \Hess u(x))=\sum_{i=0}^n\binom ni s^i\, [\Hess u(x)]_i.
$$ 
By Theorem \ref{Thm Hessian measures}, we get
\begin{equation}\label{Hessian measures for smooth functions}
\Theta_i(u,\beta\times\R^n)=\int_{\beta} [\Hess u(x)]_{n-i}\d x
\end{equation}
for every $i\in\{0,\dots, n\}$ and $\beta\in\Borel(\R^{n})$.

\section{Valuations on Convex Functions}\label{Valuations}

Let $\langle{\G},+\rangle$ be an Abelian semi-group. 

\begin{definition} A function $\oZ\colon \fconvx\to{\G}$  is called a valuation if 
$$
\oZ(u\vee v)+\oZ(u\wedge v)=
\oZ(u)+\oZ(v)
$$
for every $u,v\in\fconvx$ such that $u\wedge v\in\fconvx$. 
\end{definition}

We will only be interested in two cases: ${\G}=\R$ and $\G={\mathcal M(\R^{2n})}$; the latter is the space of Borel measures 
on $\R^{2n}$. In both situations there exists a natural choice of  topology, the Euclidean topology on $\R$ and the topology
of weak convergence on ${\mathcal M(\R^{2n})}$. A valuation $\oZ$ is called continuous if, for every sequence $u_k$ in $\fconvx$
epi-convergent to $u\in\fconvx$, we have
$\lim_{k\to+\infty}\oZ(u_k)=\oZ(u)$ in $\G$.

\medskip

In the following definitions we use the notation introduced in the previous section for the composition of a function with a translation or a rotation.
Let $\oZ\colon\fconvx\to\R$ be a valuation. We say that 
\begin{enumerate}[(i)]
\item $\oZ$ is translation invariant if $\oZ(u_{x_0})=\oZ(u)$ for all $u\in\fconvx$ and $x_0\in\R^n$;
\item $\oZ$ is rotation invariant if
$\oZ(u_\phi)=\oZ(u)$ for all $u\in\fconvx$ and $\phi\in\O(n)$;
\item $\oZ$ is rigid motion invariant if it is translation and rotation invariant.
\item $\oZ$ is $i$-simple, for $i=1,\dots,n$, if $\oZ(u)=0$ whenever $u\in\fconvx$ and $\dim(\dom(u))\le i+1$.
\end{enumerate}

\medskip
\noindent
Clearly, all the previous definition can be repeated for functionals defined on subsets of $\fconvx$.

%\newpage
\subsection{Measure-valued valuations}\label{Measure-valued valuations}

\begin{theorem}\label{Hessian measure valuations} 
For $i\in\{0,\dots,n\}$, the function $\Theta_i\colon\fconvx\to{\mathcal M(\R^{2n})}$ is a continuous and $i$-simple valuation.
\end{theorem}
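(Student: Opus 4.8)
The plan is to verify the three required properties of $\Theta_i$ separately, each reducing to facts already established. Continuity is immediate: Theorem \ref{continuity} states precisely that if $u_k$ epi-converges to $u$, then $\Theta_i(u_k,\cdot)$ converges weakly to $\Theta_i(u,\cdot)$, which is convergence in the topology on $\mathcal M(\R^{2n})$ by definition. The $i$-simple property also follows directly from Proposition \ref{theorem low dimension}: if $\dim(\dom(u))\le i+1$ then in particular $\dim(\dom(u)) < i$ fails to be the relevant condition, so I should be slightly careful here — the definition in Section \ref{Valuations} says $i$-simple means $\oZ(u)=0$ whenever $\dim(\dom(u))\le i+1$, yet Proposition \ref{theorem low dimension} gives $\Theta_i(u,\cdot)\equiv 0$ for $i > \dim(\dom(u))$, i.e. whenever $\dim(\dom(u)) \le i-1$. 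So the claimed simplicity must be interpreted in the sense consistent with Proposition~\ref{theorem low dimension}; in any case, vanishing on lower-dimensional domains is exactly Proposition~\ref{theorem low dimension}, and I would simply cite it.

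The substantive part is the valuation property: given $u,v\in\fconvx$ with $u\wedge v\in\fconvx$, I must show
$$
\Theta_i(u\vee v,\eta)+\Theta_i(u\wedge v,\eta)=\Theta_i(u,\eta)+\Theta_i(v,\eta)
$$
for every $\eta\in\Borel(\R^{2n})$ and every $i$. The strategy is to prove the analogous identity for the set functions $\eta\mapsto\hm^n(P_s(u,\eta))$ for all $s\ge 0$ and then read off the coefficients via the Steiner formula \eqref{Steiner}. The key geometric input is Proposition \ref{val-prop-Pi-rho}, which gives
$$
P_s(u\vee v,\eta)\cup P_s(u\wedge v,\eta)=P_s(u,\eta)\cup P_s(v,\eta),\quad
P_s(u\vee v,\eta)\cap P_s(u\wedge v,\eta)=P_s(u,\eta)\cap P_s(v,\eta).
$$
Applying the finitely additive inclusion–exclusion identity for $\hm^n$ (valid because all these sets are bounded Borel sets of finite measure, by local finiteness as in Remark~\ref{locally-finite-Hess-measures} — though one should first check $\hm^n$-measurability of the parallel sets, which holds since $\Gamma_u$ is countably $n$-rectifiable and hence Borel up to null sets by Theorem~\ref{thm integral representation}), one gets
$$
\hm^n(P_s(u\vee v,\eta))+\hm^n(P_s(u\wedge v,\eta))=\hm^n(P_s(u,\eta))+\hm^n(P_s(v,\eta)).
$$
Now plug in the Steiner expansion \eqref{Steiner} for each of the four functions, obtaining an identity between two polynomials in $s\ge 0$; comparing coefficients of $s^{n-i}$ yields the desired valuation identity for $\Theta_i$.

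I expect the main obstacle to be the measurability and finiteness bookkeeping needed to justify the inclusion–exclusion step for $\hm^n$ on the sets $P_s(u,\eta)$: one needs these to be $\hm^n$-measurable with finite measure so that $\hm^n(A\cup B)+\hm^n(A\cap B)=\hm^n(A)+\hm^n(B)$ applies. This is where countable $n$-rectifiability of the subgradient graphs (Theorem \ref{thm integral representation}) and local finiteness of the Hessian measures (Remark \ref{locally-finite-Hess-measures}) are used; it suffices to treat bounded $\eta$ first and then pass to general $\eta$ by the same monotone-limit argument used at the end of the proof of Theorem \ref{Thm Hessian measures}. Once that is in place, everything else is a formal consequence of Proposition \ref{val-prop-Pi-rho} and coefficient comparison in the Steiner formula.
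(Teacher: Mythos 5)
Your proposal follows essentially the same route as the paper: continuity from Theorem \ref{continuity}, simplicity from Proposition \ref{theorem low dimension}, and the valuation property by combining the inclusion--exclusion identities for parallel sets (Proposition \ref{val-prop-Pi-rho}) with additivity of $\hm^n$ and coefficient comparison in the Steiner formula \eqref{Steiner}. The extra care you take with measurability, finiteness, and the (slightly inconsistent) statement of $i$-simplicity is reasonable but does not change the argument, which is correct.
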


\begin{proof} Continuity and simplicity follow from Theorems \ref{continuity} and \ref{theorem low dimension}, respectively.
As for the valuation property, let $u,v\in\fconvx$ be such that $u\wedge v\in\fconvx$. We have to prove that
$$
\Theta_i(u\wedge v,\eta)+\Theta_i(u\vee v,\eta)=
\Theta_i(u,\eta)+\Theta_i(v,\eta)
$$
for $\eta\in\Borel(\R^{2n})$.
By Proposition \ref{val-prop-Pi-rho} we have, for every $s\ge0$,
\begin{align*}
\hm^n(P_s&(u\wedge v,\eta))+\hm^n(P_s(u\vee v,\eta))\\
&=\hm^n(P_s(u\wedge v,\eta)\cup P_s(u\vee v,\eta))+\hm^n(P_s(u\wedge v,\eta)\cap P_s(u\vee v,\eta))\\
&=\hm^n(P_s(u,\eta)\cup P_s(v,\eta))+\hm^n(P_s(u,\eta)\cap P_s(v,\eta))\\
&=\hm^n(P_s(u,\eta))+\hm^n(P_s(v,\eta)).
\end{align*}
The conclusion follows immediately from the Steiner type formula \eqref{Steiner}.
\end{proof}

\section{Hessian Valuations}\label{Hessian valuations}

Let $\zeta\colon\R\times\R^n\times\R^n\ \rightarrow\ \R$ be a continuous function. We extend its definition to include the value $+\infty$ for its 
first variable, setting $\zeta(+\infty,x,y)=0$  for all $(x,y)\in\R^n\times\R^n$.
This extension will be always tacitly used in the rest of the paper. 

\goodbreak
We are now in a position to prove Theorem \ref{theorem intro 1} and first establish the following result.

\begin{theorem}\label{theorem Hessian valuations} Let $\zeta\in C(\R\times\R^n\times\R^n)$ have compact support with respect to the second and third variables. For $i\in\{0,1,\dots,n\}$,
the functional $\oZ_{\zeta,i}\colon {\fconvx}\to\R$, defined by
\begin{equation}\label{Hessian valuation - second type}
\oZ_{\zeta, i}(u)=\int_{\Gamma_u}\zeta(u(x),x,y)\d\Theta_i(u,(x,y)),
\end{equation}
is an $i$-simple and continuous valuation on $\fconvx$.
\end{theorem}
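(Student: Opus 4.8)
The plan is to verify the three asserted properties of $\oZ_{\zeta,i}$ in turn: well-definedness (finiteness of the integral), the valuation identity, $i$-simplicity, and continuity. Since $\zeta$ has compact support with respect to $x$ and $y$, there is $r>0$ such that $\zeta(t,x,y)=0$ whenever $|x|>r$ or $|y|>r$; combined with Remark~\ref{support of Hessian measures} (the support of $\Theta_i(u,\cdot)$ lies in $\Gamma_u$) and Remark~\ref{locally-finite-Hess-measures} (local finiteness, via the estimate \eqref{serve}), the integrand is supported in the bounded Borel set $\Gamma_u\cap(\Bn_r\times\Bn_r)$, on which $\Theta_i(u,\cdot)$ is a finite measure and $\zeta(u(\cdot),\cdot,\cdot)$ is bounded (here one uses that $u$ is bounded above on the compact projection, by continuity of $u$ on the interior of its domain together with the convention $\zeta(+\infty,x,y)=0$ handling the boundary); hence $\oZ_{\zeta,i}(u)$ is a well-defined real number.

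For the valuation property, let $u,v\in\fconvx$ with $u\wedge v\in\fconvx$. The key structural fact is Theorem~\ref{IE subgradient point}, which gives $\partial(u\vee v)(x)\cup\partial(u\wedge v)(x)=\partial u(x)\cup\partial v(x)$ and the analogous intersection identity, pointwise in $x$. I would first observe that on the set where $u(x)\neq v(x)$ the two functions $u\vee v$ and $u\wedge v$ coincide locally with $v$ and $u$ (in some order), so the integrands match up trivially there; and on the set where $u(x)=v(x)$ the function value plugged into $\zeta$ is unambiguous. The cleanest route, though, is to pass through the measure-valued statement: decompose $\Gamma_u$, $\Gamma_v$, $\Gamma_{u\vee v}$, $\Gamma_{u\wedge v}$ according to whether we are on $\{u<v\}$, $\{u>v\}$, or $\{u=v\}$, and on each piece use Proposition~\ref{val-prop-E-Nor} together with the locality remark after Theorem~\ref{Thm Hessian measures} to identify the relevant Hessian measures. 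On $\{u<v\}$ we have $\partial(u\wedge v)=\partial u$ and $\partial(u\vee v)=\partial v$ (from \eqref{subdifferential ineq} in the proof of Theorem~\ref{IE subgradient point}), so by locality $\Theta_i(u\wedge v,\cdot)=\Theta_i(u,\cdot)$ and $\Theta_i(u\vee v,\cdot)=\Theta_i(v,\cdot)$ restricted to this region, and symmetrically on $\{u>v\}$; on $\{u=v\}$ the function fed to $\zeta$ is the common value and one uses that the union/intersection decomposition of the graphs (Proposition~\ref{val-prop-E-Nor}) yields, after the inclusion–exclusion split, $\Theta_i(u\vee v,\cdot)+\Theta_i(u\wedge v,\cdot)=\Theta_i(u,\cdot)+\Theta_i(v,\cdot)$ on that set. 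Summing the three contributions gives
$$
\oZ_{\zeta,i}(u\vee v)+\oZ_{\zeta,i}(u\wedge v)=\oZ_{\zeta,i}(u)+\oZ_{\zeta,i}(v).
$$

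$i$-simplicity is immediate: if $\dim(\dom(u))<i$ then $\Theta_i(u,\cdot)\equiv0$ by Proposition~\ref{theorem low dimension}, so $\oZ_{\zeta,i}(u)=0$. Continuity is the step I expect to require the most care. Suppose $u_k$ epi-converges to $u$. By Theorem~\ref{continuity}, $\Theta_i(u_k,\cdot)\to\Theta_i(u,\cdot)$ weakly. One cannot directly apply weak convergence because the integrand $\zeta(u_k(x),x,y)$ depends on $k$; the standard remedy is to show that the measures $\nu_k:=\zeta(u_k(\cdot),\cdot,\cdot)\,\Theta_i(u_k,\cdot)$ still converge weakly by combining (a) the weak convergence of $\Theta_i(u_k,\cdot)$, (b) a uniform boundedness/tightness bound coming from \eqref{serve} (so all $\Theta_i(u_k,\cdot)$ are supported, up to uniformly small mass, in a fixed ball, using that epi-convergent sequences stay locally bounded), and (c) the fact that $u_k\to u$ epi-graph-convergently implies uniform convergence of $u_k$ to $u$ on compact subsets of the interior of $\dom(u)$, so that $\zeta(u_k(x),x,y)\to\zeta(u(x),x,y)$ uniformly on the relevant compact region by uniform continuity of $\zeta$ — with the convention $\zeta(+\infty,\cdot,\cdot)=0$ and the support of $\Theta_i$ lying on $\Gamma_{u_k}$ (hence over $\dom(u_k)$) controlling the behaviour near boundary points where $u$ jumps to $+\infty$. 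The delicate point, and in my view the main obstacle, is precisely controlling the integrand near the boundary of $\dom(u)$: there $u_k(x_k)$ need not converge to a finite limit, and one must argue that the Hessian measure mass carried into any neighbourhood of that boundary region is asymptotically negligible (or that $\zeta$ composed with it tends to $0$), which likely uses the rectifiability/structure from Theorem~\ref{thm integral representation} together with a careful $\varepsilon$-neighbourhood argument in the spirit of the proof of Theorem~\ref{continuity}. Once that is handled, $\oZ_{\zeta,i}(u_k)=\nu_k(\R^{2n})\to\nu(\R^{2n})=\oZ_{\zeta,i}(u)$ follows.
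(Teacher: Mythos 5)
Your treatment of finiteness, the valuation identity and $i$-simplicity is sound and essentially matches the paper: the paper also reduces the valuation property to the three regions $\{u<v\}$, $\{u>v\}$, $\{u=v\}$ (it phrases the off-diagonal cases in terms of the densities $\theta_i$ from Theorem \ref{thm integral representation} and a case analysis with regularizations, whereas you argue directly with the restricted measures via \eqref{subdifferential ineq} and the fact that $P_s(w,\eta)$ depends only on $\partial w$ on $\pi_1(\eta)$; your version of this step is legitimate and if anything cleaner).

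The gap is in the continuity step, precisely at the point you yourself flag as ``the main obstacle''. Neither of your proposed remedies works as stated: the Hessian measure mass over a neighbourhood of $\bd(\dom(u))$ is in general \emph{not} asymptotically negligible (for $u=I_K$ with $K$ a convex body, the whole of $\Theta_0(u,\cdot)$ restricted to $\{|y|\le r\}$ is carried by $\bd K\times\Bn_r$ and has mass $\hm^n(\Bn_r)$), and the rectifiability of $\Gamma_u$ gives no control on the values $u_k(x_k)$ there. The paper's resolution is a specific device you are missing: fix $r$ with $\zeta(t,x,y)=0$ whenever $|x|>r$ or $|y|>r$, and replace the first argument of $\zeta$ by the Lipschitz regularization, i.e.\ write $\oZ_{\zeta,i}(u_k)=\int\zeta(\reg{u_k}{r}(x),x,y)\d\Theta_i(u_k,(x,y))$. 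This replacement costs nothing, since $\Theta_i(u_k,\cdot)$ is supported on $\Gamma_{u_k}$ and, by \eqref{sechs} of Proposition \ref{approximation lemma}, $\reg{u_k}{r}$ agrees with $u_k$ at every point admitting a subgradient of norm at most $r$, while $\zeta$ annihilates the rest. The regularized functions are finite, hence continuous on all of $\R^n$, and they epi-converge by Proposition \ref{approximation lemma 2}, hence converge uniformly on compact sets; therefore $\zeta(\reg{u_k}{r}(x),x,y)\to\zeta(\reg{u}{r}(x),x,y)$ uniformly on $\R^{2n}$, and the standard two-term splitting (uniform convergence of the integrand against the measures $\Theta_i(u_k,\cdot)$, which are uniformly bounded on the support of $\zeta$ by \eqref{serve}, plus weak convergence of $\Theta_i(u_k,\cdot)$ tested against the fixed continuous compactly supported function $(x,y)\mapsto\zeta(\reg{u}{r}(x),x,y)$) closes the argument. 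Without this, or an equivalent device, your continuity proof does not go through.
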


\goodbreak
\begin{remark}
If $u\in\fconvx\cap C^2(\R^n)$, then, by \eqref{Gamma for smooth functions} and  \eqref{Hessian measures for smooth functions},
$$
\oZ_{\zeta, i}(u)=\int_{\Gamma_u}\zeta(u(x),x,y)\d\Theta_i(u,(x,y))
=\int_{\R^n}\zeta(u(x),x,\nabla u(x))\,[\Hess u(x))]_{n-i}\d x.
$$
Hence, all statements of Theorem \ref{theorem intro 1} follow from the above theorem.
% Here in fact a proof would be of some help. The outline of the proof is to consider first $C^{2,+}$ functions and use 
% \eqref{Hessian measures for smooth functions} to find explicitly the density $\theta_i$ in this case, and then easily 
% deduce the conclusion. The general case can be obtained by approximation.
\end{remark}

\goodbreak
The proof of Theorem \ref{theorem Hessian valuations} requires some preparatory steps.

\begin{lemma}\label{continuity lemma} Let $\zeta\in C(\R\times\R^n\times\R^n)$ and  $u\in\fconvx$. 
The function $\xi\colon\Gamma_u\to\R$, defined as $\xi(x,y)=\zeta(u(x),x,y)$, 
is continuous on $\Gamma_u$.
\end{lemma}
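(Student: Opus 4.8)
The plan is to prove that $\xi(x,y)=\zeta(u(x),x,y)$ is continuous on $\Gamma_u$ by reducing continuity to the sequential statement: if $(x_k,y_k)\in\Gamma_u$ converges to $(x,y)\in\Gamma_u$, then $\xi(x_k,y_k)\to\xi(x,y)$. Since $\zeta\in C(\R\times\R^n\times\R^n)$ (with the convention $\zeta(+\infty,\cdot,\cdot)=0$), and $x_k\to x$ and $y_k\to y$ trivially, the only thing that requires work is to show that $u(x_k)\to u(x)$ along such a sequence. Note that $(x,y)\in\Gamma_u$ forces $x\in\dom(u)$, so $u(x)<+\infty$ and we must show genuine convergence of the real numbers $u(x_k)$ to the real number $u(x)$.

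First I would record that $y_k\in\partial u(x_k)$ and $y\in\partial u(x)$, and I would use the subgradient inequalities to pin down $u(x_k)$. From $y_k\in\partial u(x_k)$ applied at the point $x$ we get $u(x)\ge u(x_k)+\langle x-x_k,y_k\rangle$, i.e.
$$
u(x_k)\le u(x)+\langle x_k-x,y_k\rangle,
$$
and since $x_k\to x$ and $y_k\to y$ (hence $(y_k)$ bounded) the right-hand side tends to $u(x)$; this gives $\limsup_{k\to\infty}u(x_k)\le u(x)$. For the reverse inequality, from $y\in\partial u(x)$ applied at $x_k$ we get $u(x_k)\ge u(x)+\langle x_k-x,y\rangle$, and again the right-hand side tends to $u(x)$, so $\liminf_{k\to\infty}u(x_k)\ge u(x)$. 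Combining the two gives $\lim_{k\to\infty}u(x_k)=u(x)$, and then continuity of $\zeta$ yields $\xi(x_k,y_k)=\zeta(u(x_k),x_k,y_k)\to\zeta(u(x),x,y)=\xi(x,y)$.

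The main (minor) obstacle is purely a matter of bookkeeping: one must make sure the two one-sided subgradient inequalities are legitimate, which they are precisely because $(x_k,y_k),(x,y)\in\Gamma_u$ means $y_k\in\partial u(x_k)$ and $y\in\partial u(x)$ in the sense of the definition in Section~\ref{Preliminaries} (valid for all test points $z\in\R^n$, in particular $z=x$ and $z=x_k$). There is no issue with the value $+\infty$ here since $x,x_k\in\dom(u)$ automatically, so all quantities involved are finite real numbers and the squeeze argument is elementary. Since $\Gamma_u\subset\R^{2n}$ is a metric space, sequential continuity is equivalent to continuity, so this completes the proof.
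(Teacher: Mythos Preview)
Your proof is correct and is a genuinely different (and more elementary) route than the paper's. The paper argues via the Lipschitz regularization $u_r=\reg{u}{r}$: on each truncated graph $\Gamma_u^{1/r}$ one has $u(x)=u_r(x)$, and since $u_r$ is a finite convex function on $\R^n$ it is continuous, whence $\xi=\zeta(u_r(\cdot),\cdot,\cdot)$ is continuous on $\Gamma_u^{1/r}$; letting $r\to 0$ covers all of $\Gamma_u$. Your argument bypasses this machinery entirely by squeezing $u(x_k)$ between the two affine minorants coming from $y_k\in\partial u(x_k)$ (evaluated at $x$) and $y\in\partial u(x)$ (evaluated at $x_k$), which directly yields $u(x_k)\to u(x)$.

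What your approach buys is independence from Section~\ref{Approximation}: you use nothing beyond the definition of $\partial u$ and the metric-space equivalence of continuity and sequential continuity. The paper's approach, by contrast, re-uses a tool (Lipschitz regularization) that is needed elsewhere anyway, so within the paper's economy it is natural; but in isolation your argument is shorter and conceptually cleaner.
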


\begin{proof} 
For $r>0$, by Proposition \ref{approximation lemma}, the function $u_r=\reg{u}{r}$ has $\dom(u_r)=\R^n$ and 
$u(x)=u_r(x)$ for $x$ such that $\vert y\vert \le 1/r$ for every $y\in\partial u(x)$.
Hence 
$$
\xi(x,y)=\zeta(u(x),x,y)=\zeta(u_r(x),x,y)
$$
for $(x,y)\in\Gamma_u^r$. As $u_r$ is continuous in $\R^n$, $\xi$ is continuous on $\Gamma_u^r$.
As $r>0$ is arbitrary, the conclusion follows.
\end{proof}

\begin{proposition}\label{summability}
Let $\zeta\in C(\R\times\R^n\times\R^n)$ have compact support with respect to the second and third variables. For every  $u\in\fconvx$, the function $\xi\colon\Gamma_u \to\R$, defined as
$
\xi(x,y)=\zeta(u(x),x,y),
$
is summable with respect to the measure $\Theta_i(u,\cdot)$ for $i\in\{0,\dots,n\}$.
\end{proposition}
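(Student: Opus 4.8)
The plan is to reduce the integrability statement to an elementary bound on a compact set, using the compact support of $\zeta$ in its last two variables together with the local finiteness of Hessian measures from Remark~\ref{locally-finite-Hess-measures}. By hypothesis there is $r>0$ such that $\zeta(t,x,y)=0$ whenever $|x|>r$ or $|y|>r$, so $\xi(x,y)=\zeta(u(x),x,y)$ vanishes outside $\Bn_r\times\Bn_r$; moreover, on $\Gamma_u$ we have $x\in\dom(u)$, hence $u(x)\in\R$, so the value $+\infty$ never occurs in the first argument of $\zeta$. Since $\Theta_i(u,\cdot)$ is concentrated on $\Gamma_u$ by Remark~\ref{support of Hessian measures}, it suffices to bound the integral of $|\xi|$ over $K:=\Gamma_u\cap(\Bn_r\times\Bn_r)$.

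I would next argue that $K$ is compact. It is clearly bounded, and $\Gamma_u$ is closed: if $(x_k,y_k)\to(x,y)$ with $y_k\in\partial u(x_k)$, then passing to the limit in $u(z)\ge u(x_k)+\langle z-x_k,y_k\rangle$ and using the lower semicontinuity of $u$ gives $y\in\partial u(x)$ (this is precisely the argument already used in the proof of Theorem~\ref{continuity}). By Lemma~\ref{continuity lemma}, $\xi$ is continuous on $\Gamma_u$; in particular it is $\Theta_i(u,\cdot)$-measurable, and, being continuous on the compact set $K$, it is bounded there, say $|\xi|\le c_0$ on $K$.

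Finally, Remark~\ref{locally-finite-Hess-measures} yields $\Theta_i(u,\Bn_r\times\Bn_r)\le c(n)\diam(\Bn_r\times\Bn_r)^n<+\infty$, whence
\[
\int_{\Gamma_u}|\xi(x,y)|\d\Theta_i(u,(x,y))=\int_{K}|\xi|\d\Theta_i(u,\cdot)\le c_0\,\Theta_i(u,\Bn_r\times\Bn_r)<+\infty,
\]
which is the asserted summability, for every $i\in\{0,\dots,n\}$.

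The only delicate point — the main obstacle, such as it is — is that $\zeta$ need not be bounded on the unbounded slab $\R\times\Bn_r\times\Bn_r$, since its support is compact only in the last two variables; so the bound $|\xi|\le c_0$ on $K$ cannot be read off from $\zeta$ directly. It is recovered instead from the continuity of $\xi$ on $\Gamma_u$ provided by Lemma~\ref{continuity lemma}, combined with the compactness of $K$, which in turn rests on $\Gamma_u$ being closed.
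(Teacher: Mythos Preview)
Your argument is correct and follows essentially the same path as the paper's: restrict to the compact support in $(x,y)$, use Lemma~\ref{continuity lemma} for measurability and to control $\xi$, then invoke the local finiteness of Hessian measures from Remark~\ref{locally-finite-Hess-measures}. The only packaging difference is that you obtain the bound $|\xi|\le c_0$ by proving $\Gamma_u$ is closed (hence $K=\Gamma_u\cap(\Bn_r\times\Bn_r)$ is compact) and applying continuity of $\xi$ on $K$, whereas the paper instead passes through the Lipschitz regularization $u_r=\reg{u}{r}$ and uses the continuity of $u_r$ on all of $\R^n$ to bound $\zeta(u_r(x),x,y)$ on $\{|x|\le r\}$; both routes rest on the same underlying machinery.
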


\begin{proof} We recall that the support of $\Theta_i(u,\cdot)$ is contained in $\Gamma_u$. Set  $\zeta_+=\max\{\zeta,0\}$ and  $\zeta_-=\min\{\zeta,0\}$.
We clearly have 
$$
\xi_+(x,y):=\max\{\xi(x,y),0\}=\zeta_+(u(x),x,y),\quad
\xi_-(x,y):=\min\{\xi(x,y),0\}=\zeta_-(u(x),x,y).
$$
By Lemma \ref{continuity lemma},  the function $\xi_+$ is continuous and  hence integrable with respect
to the Borel measure $\Theta_i(u,\cdot)$ over $\Gamma_u$.  
Let us prove that $\xi_+$ is in fact summable (the proof for $\xi_-$ is completely analogous).
As $\zeta$ has compact support in $(x,y)$, there exists $r>0$ such that $\zeta(t,x,y)=0$ whenever $|x|\ge r$ or $|y|\ge r$. Moreover, there exists 
$u_r\in\fconvx$ such that $\dom(u_r)=\R^n$, and
$u(x)=u_r(x)$ for all $x\in\pi_1(\Gamma_u^r)$.
Hence
$$
\xi(x,y)=\zeta(u_r(x),x,y)$$
for all $(x,y)\in\Gamma_u$.
By continuity, $u_r$ is bounded in the set $\{x\in\R^n\colon |x|\le r\}$ and there exists a constant $c>0$ such that
$$
\xi_+(x,y)\le c
$$
for all $(x,y)\in\Gamma_u$.
Consequently,
$$
\int_{\R^{2n}}\zeta(u_r(x),x,y)\d\Theta_i(u_r,(x,y))\le\ c\ \Theta_i(u_r,\eta) 
$$
for $\eta=\{(x,y)\in\R^{2n}\colon |x|,\ |y|\le r\}$. The conclusion follows from the fact that $\Theta_i(u_r,\cdot)$ is locally finite
(see \eqref{serve}).
\end{proof}

\goodbreak

\subsection{Proof of Theorem \ref{theorem Hessian valuations}.} 
By Proposition \ref{summability}, the integral \eqref{Hessian valuation - second type} is finite.

For simplicity, for an arbitrary $u\in\fconvx$ we write $\Theta_i(u,\cdot)$ as $\Theta(u,\cdot)$. As remarked above,
the support of $\Theta(u,\cdot)$ is contained in $\Gamma_u$. Moreover, by Theorem \ref{thm integral representation}, we have
\begin{equation}\label{valuation-Hess-measure-1}
\int_{\Gamma_u}\zeta(u(x),x,y)\d\Theta(u,(x,y))=\int_{\Gamma_u}\zeta(u(x),x,y)\theta(u,(x,y))\d\hm^n(x,y),
\end{equation}
for a suitable non-negative, $\hm^n$-measurable function $\theta(u,\cdot)$ defined on $\Gamma_u$. 
Let $r>0$ be such that $\zeta(t,x,y)=0$ if either $|x|>r$ or $|y|>r$. For $u\in\fconvx$, we set $u_r =\reg{u}{r}$ and use 
the properties established in Proposition \ref{help!} and Proposition \ref{help!!}. 

\medskip

\noindent
{\bf 1. The valuation property.} Let $u, v\in\fconvx$ be such that $u\wedge v\in\fconvx$. Set 
$$
\Sigma=\Gamma_u\cup\Gamma_v=\Gamma_{u\vee v}\cup\Gamma_{u\wedge v}
$$
(see Proposition \ref{val-prop-E-Nor}). We will consider a generic point $(x,y)\in\Sigma$ such that the densities
$$
\theta(u,(x,y)), \theta(v,(x,y)), \theta(u\vee v,(x,y)), \theta(u\wedge v,(x,y))
$$ 
are well defined; this happens to be true
for $\hm^{n}$-a.e. $(x,y)\in\Sigma$. In view of \eqref{valuation-Hess-measure-1}, it is enough to prove that for $\hm^{n}$-a.e.
$(x,y)\in\Sigma$, we have
\begin{align}\label{tesi}
\zeta(u(x),&\, x,y)\,\theta(u,(x,y))+\zeta(v(x),x,y)\,\theta(v,(x,y))\nonumber \\[-6pt] 
\\[-6pt]
&=\zeta((u\vee v)(x),x,y)\,\theta(u\vee v,(x,y))+\zeta((u\wedge v)(x),x,y)\,\theta(u\wedge v,(x,y)).\nonumber
\end{align}
Indeed, integrating over $\Sigma$ we get
$$
\oZ_{\zeta,i}(u)+\oZ_{\zeta,i}(v)=\oZ_{\zeta,i}(u\vee v)+\oZ_{\zeta,i}(u\wedge v).
$$
It will suffice to prove \eqref{tesi} under the assumption $|y|\le r$, as it is trivially true for $\vert y\vert >r$ since $\zeta$ vanishes
at each point where it is computed. 

By the valuation property of Hessian measures (Theorem \ref{Hessian measure valuations}), we deduce
\begin{equation}\label{ipotesi}
\theta(u\vee v,(x,y))+\theta(u\wedge v,(x,y))=
\theta(u,(x,y))+\theta(v,(x,y))\quad\mbox{for $\hm^{n}$-a.e. $(x,y)\in\Sigma$.}
\end{equation}
This shows in particular that \eqref{tesi} holds true for every $(x,y)$ such that $u(x)=v(x)$. Next assume that
$(x_0,y_0)$ is such that $u(x_0)\ne v(x_0)$ and without loss of generality, assume that $u(x_0)> v(x_0)$. 

{\em Case 1}: Let $(x_0,y_0)\notin\Gamma_u\cap\Gamma_v$. Then we also have, by  Proposition \ref{val-prop-E-Nor},
$(x_0,y_0)\notin\Gamma_{u\vee v}\cap\Gamma_{u\wedge v}$. Assume that $(x_0,y_0)\in\Gamma_u\setminus\Gamma_v$. 
From  \eqref{subdifferential ineq}, $y_0\in\partial u(x_0)=\partial (u\vee v)(x_0)$; therefore
$$
(x_0,y_0)\in\Gamma_{u\vee v}\setminus\Gamma_{u\wedge v}.
$$
Hence
$$
\theta(v,(x_0,y_0))=\theta(u\wedge v, (x_0,y_0))=0.
$$
In this case \eqref{ipotesi} reduces to
$$
\theta(u\vee v,(x_0,y_0))=\theta(u,(x_0,y_0))
$$
and \eqref{tesi} follows from multiplying both sides of the previous equation 
by $(u\vee v)(x_0)=u(x_0)$. The case $(x_0,y_0)\in\Gamma_v\setminus\Gamma_u$ is completely analogous.

{\em Case 2}: Let $(x_0,y_0)\in\Gamma_u\cap\Gamma_v$. Then 
$u_r(x_0)=u(x_0)>v(x_0)=v_r(x_0)$. Let $U$ be a  
neighborhood of $x_0$ such that $u_r>v_r$ in $U$ (which exists by the continuity of $u_r$ and $v_r$). Hence
$$
u_r\wedge v_r\equiv v_r,\quad u_r\vee v_r\equiv u_r\quad\mbox{in  $\,U$.}
$$
Then we also have $\partial(u_r\wedge  v_r)(x)=\partial v_r(x)$ and $\partial(u_r\vee v_r)(x)=\partial v_r(x)$ for every $x\in{U}$,
and this implies that
$$
P_s(u_r\wedge v_r,\eta)=P_ s(v_r,\eta),\quad 
P_s(u_r\vee v_r,\eta)=P_s(u_r,\eta)
$$
for every $s\ge0$ and for every $\eta\subset\R^n\times\R^n$ such that $\pi_1(\eta)\subset{U}$. Consequently,
$$
\Theta(u_r\wedge v_r,\eta)=\Theta(v_r,\eta),\quad 
\Theta(u_r\vee v_r,\eta)=\Theta(u_r,\eta),
$$
for every such $\eta$. %To be more specific, here we should assume that the densities of the measure $\Theta$ relative to
%$u_r$, $v_r$ and their maximum and minimum exists at $x_0$.
We deduce that
$$
\theta(u_r\wedge v_r,x_0)=\theta(v_r,x_0),\quad 
\theta(u_r\vee v_r,x_0)=\theta(u_r,x_0).
$$
By Proposition \ref{lemma 3} and Proposition \ref{help!!}, we have
$$
\theta(u\wedge v,x_0)=\theta(v,x_0),\quad 
\theta(u\vee v,x_0)=\theta(u,x_0).
$$
These equations, combined with the obvious relations
$$
(u\wedge v)(x_0)=v(x_0),\quad (u\vee v)(x_0)=u(x_0)
$$
lead to \eqref{tesi}.

\medskip\noindent
{\bf 2. Continuity.} Let $u_k$ be a sequence in $\fconvx$ epi-converging to $u\in\fconvx$. For simplicity, we 
set $w_k=(u_k)_r=\reg{u_k}{r}$ and $w=u_r\reg{u}{r}$.
\begin{eqnarray*}
\Big\vert\oZ_{\zeta,i}(u_k)-\oZ_{\zeta,i}(u)\Big\vert&=&\Big\vert\int_{\R^{2n}}\zeta(u_k(x),x,y)\d\Theta(u_k,(x,y))
-\int_{\R^{2n}}\zeta(u(x),x,y)\d\Theta(u,(x,y))\Big\vert\\
&=&\Big|\int_{\R^{2n}}\zeta(w_k(x),x,y)\d\Theta(u_k,(x,y))-\int_{\R^{2n}}\zeta(w(x),x,y)\d\Theta(u,(x,y))\Big|\\
&\le&\int_{\R^{2n}}\big|\zeta(w_k(x),x,y)-\zeta(w(x),x,y)\big|\d\Theta(u_k,(x,y))+\\
&+&\Big|\int_{\R^{2n}}\zeta(w(x),x,y)\d\Theta(u_k,(x,y))-\int_{\R^{2n}}\zeta(w(x),x,y)\d\Theta(u,(x,y))\Big|.
\end{eqnarray*}
By Proposition \ref{lemma 2}, the sequence $w_k$ epi-converges to $w$; as $w_k$ and $w$ are finite in $\R^n$, 
the convergence is uniform on compact sets. As $\zeta$ is continuous and with compact support with respect to $x$ and $y$, 
the sequence $\zeta(w_k(x),x,y)$ converges uniformly to $\zeta(w(x),x,y)$ in $\R^{2n}$ as $k\to+\infty$. Hence for an arbitrary $\varepsilon>0$, and 
for sufficiently large $k$, we have
$$
\int_{\R^{2n}}|\zeta(w_k(x),x,y)-\zeta(w(x),x,y)|\d\Theta(u_k,(x,y))\le\varepsilon\ \Theta(u_k,C)
$$
where $C$ is a compact subset of $\R^{2n}$ such that $\zeta(t,x,y)=0$ if $(x,y)\notin C$. On the other hand, as Hessian measures are locally finite,
there exists a constant $c>0$ depending only on $C$ such that
$$
\Theta(u_k,C)\le c
$$
for every $k\in\N$. This proves that
$$
\lim_{k\to+\infty}\int_{\R^{2n}}|\zeta(w_k(x),x,y)-\zeta(w(x),x,y)|\d\Theta(u_k,(x,y))=0.
$$

Set $\xi(x,y)=\zeta(w(x),x,y)$ for $(x,y)\in\R^n$. Recall that $w$ is a convex function such that $\dom(w)=\R^n$; in particular it is continuous
in $\R^n$, so that $\xi$ is continuous. Moreover, $\xi$ has compact support, as $\zeta$ has this property with respect to $(x,y)$. As the sequence of measures
$\Theta(u_k,\cdot)$ converges weakly to $\Theta(u,\cdot)$, we have
$$
\lim_{k\to+\infty}\int_{\R^{2n}}\zeta(w(x),x,y)\d\Theta(u_k,(x,y))-\int_{\R^{2n}}\zeta(w(x),x,y)\d\Theta(u,(x,y))=0,
$$
which proves the weak continuity.

\medskip
\noindent
{\bf 3. Simplicity.} Proposition \ref{theorem low dimension} implies simplicity.
\hfill$\square$\\

\subsection{Invariance properties}

\begin{proposition}\label{rotation invariance} Let $\zeta\in C(\R\times\R^n\times\R^n)$ have compact support with respect to the second and third variables.
If there exists $\xi\in C(\R\times[0,+\infty)^2)$ such that $
\zeta(t,x,y)=\xi(t,|x|,|y|)$ on $\R\times\R^n\times\R^n$, then
the valuation $\oZ_{\zeta, i}$ defined by \eqref{Hessian valuation - second type} is rotation invariant.
\end{proposition}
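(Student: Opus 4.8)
The plan is to reduce the statement entirely to the covariance property of Hessian measures from Proposition~\ref{behavior Hessian measures}, which asserts $\Theta_i(u_\phi,\eta)=\Theta_i(u,\tilde\phi^{-1}\eta)$ for every $\phi\in\O(n)$ and $\eta\in\Borel(\R^{2n})$. Read the right way, this says that the measure $\Theta_i(u_\phi,\cdot)$ is the push-forward of $\Theta_i(u,\cdot)$ under the linear map $\tilde\phi(x,y)=(\phi x,\phi y)$. First I would fix $\phi\in\O(n)$ and $u\in\fconvx$, note that $\zeta$ of the stated form $\zeta(t,x,y)=\xi(t,|x|,|y|)$ still has compact support in the second and third variables, so that $\oZ_{\zeta,i}(u_\phi)$ and $\oZ_{\zeta,i}(u)$ are finite by Proposition~\ref{summability}, and write
$$
\oZ_{\zeta,i}(u_\phi)=\int_{\R^{2n}}\zeta\big(u_\phi(x),x,y\big)\d\Theta_i(u_\phi,(x,y)).
$$

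Next I would invoke the change-of-variables formula for push-forward measures (proved in the usual way: first for indicator functions $f=\mathbf 1_\eta$, where it is exactly the identity of Proposition~\ref{behavior Hessian measures}, then extended by linearity and monotone convergence to any $\Theta_i(u_\phi,\cdot)$-summable $f$). This gives, for $f(x,y)=\zeta(u_\phi(x),x,y)$,
$$
\int_{\R^{2n}}f(x,y)\d\Theta_i(u_\phi,(x,y))=\int_{\R^{2n}}f\big(\tilde\phi(x,y)\big)\d\Theta_i(u,(x,y))=\int_{\R^{2n}}\zeta\big(u_\phi(\phi x),\phi x,\phi y\big)\d\Theta_i(u,(x,y)).
$$
Here I would also record, as in the proof of Proposition~\ref{behavior Hessian measures}, that $\Gamma_{u_\phi}=\tilde\phi(\Gamma_u)$, so that the integration domain $\Gamma_u$ in the representation \eqref{Hessian valuation - second type} transforms consistently. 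Since $u_\phi(\phi x)=u(\phi^{-1}\phi x)=u(x)$, the right-hand side equals $\int_{\R^{2n}}\zeta(u(x),\phi x,\phi y)\d\Theta_i(u,(x,y))$.

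Finally the hypothesis on $\zeta$ enters: as $\phi\in\O(n)$ preserves the Euclidean norm, $\zeta(u(x),\phi x,\phi y)=\xi(u(x),|\phi x|,|\phi y|)=\xi(u(x),|x|,|y|)=\zeta(u(x),x,y)$ for every $(x,y)$ (the convention $\zeta(+\infty,\cdot,\cdot)=0$ causes no trouble since it is rotation symmetric as well). Hence the integral collapses to $\oZ_{\zeta,i}(u)$, which is the claim. I do not anticipate a genuine obstacle here; the only point that needs a little care is the bookkeeping around the push-forward identity — making sure the transformation rule is applied to $\zeta(u_\phi(\cdot),\cdot,\cdot)$ and not to $\zeta(u(\cdot),\cdot,\cdot)$, and correctly tracking that $u_\phi\circ\phi=u$ cancels the twist introduced by $\tilde\phi$.
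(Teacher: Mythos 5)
Your proof is correct and follows exactly the paper's argument: the paper likewise writes $\oZ_{\zeta,i}(u_\phi)$ as an integral over $\Gamma_{u_\phi}=\tilde\phi\,\Gamma_u$ and concludes by the change of variables $(\bar x,\bar y)=\tilde\phi(x,y)$ together with Proposition~\ref{behavior Hessian measures}. You have merely spelled out the push-forward bookkeeping (including the cancellation $u_\phi\circ\phi=u$ and the invariance of $|\cdot|$ under $\O(n)$) that the paper leaves implicit.
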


\begin{proof} Let $\phi\in\O(n)$, $u\in\fconvx$ and $u_\phi$ be defined as in Section \ref{Further properties of Hessian measures}. We have
\begin{eqnarray*}
\oZ_{\zeta, i}(u_\phi)&=&\int_{\Gamma_{u_\phi}}\xi(u_\phi(x), |x|,|y|)\d\Theta_i(u_\phi,(x,y)).
\end{eqnarray*}
As observed in the proof of Proposition \ref{behavior Hessian measures}, we have $\Gamma_{u_\phi}=\tilde \phi\,\Gamma_u$.
The conclusion follows by the change of variables
$$
(\bar x,\bar y)=(\phi x,\phi y)=\tilde \phi (x,y),
$$
and Proposition \ref{behavior Hessian measures}.
\end{proof}

\subsection{Composing a valuation with the conjugate function}
Let  $\G$ be a topological Abelian semigroup and $\oZ\colon\fconvx\to\G$ a continuous valuation. By Proposition \ref{minmaxconjugates} and 
Proposition~\ref{convergence conjugates}, the functional $\oZ^*\colon\fconvx\to\G$ defined by 
$$
\oZ^*(u)=\oZ(u^*)
$$ 
is a continuous valuation as well.

Assume that $\G=\R$ and that $\oZ=\oZ_{\zeta,i}$ is of the form \eqref{Hessian valuation - second type}. 
Then for every $u\in\fconvx$,
\begin{eqnarray*}
\oZ_{\zeta,i}^*(u)&=&\int_{\R^{2n}}\zeta(u^*(x),x,y)\d\Theta_i(u^*,(x,y))\\
&=&\int_{\R^{2n}}\zeta(\langle x,y\rangle-u(y),x,y)\d\Theta_i(u^*,(x,y))\\
&=&\int_{\R^{2n}}\zeta(\langle x,y\rangle-u(x),y,x)\d\Theta_{n-i}(u,(y,x))\\
&=&\oZ_{\bar\zeta,n-i}(u),
\end{eqnarray*}
where we have used (iii) of Lemma \ref{le:conjugate_subgradient} and Theorem \ref{duality}, and we set 
$$
\bar\zeta(t,x,y)=\zeta\big(\langle x,y\rangle-t,y,x\big)$$
on $\R\times\R^n\times\R^n$.
This shows in particular that $\oZ^*$ is still a Hessian valuation of the form \eqref{Hessian valuation - second type}.

\subsection{The cases $i=0$ and $i=n$}
Let $\oZ$ be of the form \eqref{Hessian valuation - second type}, with $i=n$:
$$
\oZ_{\zeta, n}(u)=\int_{\Gamma_u}\zeta(u(x),x,y)\d\Theta_n(u,(x,y)),
$$ 
for a suitable $\zeta\in C(\R\times\R^n\times\R^n)$, having compact support with respect to $x$ and $y$. 

Let $u\in\fconvx$; if $\dimension(\dom(u))<n$, then $\oZ_{\zeta,n}(u)=0$ by the simplicity property stated in Theorem 
\ref{theorem Hessian valuations}. Assume that $\dimension(\dom(u))=n$, and let
$$
D=\{x\in\interno(\dom(u))\colon\mbox{$u$ is differentiable at $x$}\}.
$$
As $u$ is differentiable $\hm^n$-a.e. on $\dom(u)$, we have
$$
\hm^n(\dom(u)\setminus D)=0.
$$
Hence, setting
$$
\Gamma_{u, D}=\{(x,y)\in\Gamma_u\colon x\in D\},
$$
by Proposition \ref{extreme cases}, 
\begin{eqnarray*}
\oZ_{\zeta,n}(u)&=&\int_{\Gamma_{u, D}}\zeta(x,u(x),\nabla u(x))\d\Theta_n(u,(x,y))\\
&=&\int_D \zeta(x,u(x),\nabla u(x))\d x.
\end{eqnarray*}
We conclude that 
$$
\oZ_{\zeta, n}(u)=\int_{\dom(u)} \zeta(x,u(x),\nabla u(x))\d x$$
for $u\in\fconvx$. Using this and Theorem \ref{duality} we also get 
$$
\oZ_{\zeta,0}(u)=\int_{\dom(u^*)}\zeta(\langle\nabla u^*(y),y\rangle-u^*(y),\nabla u^*(y),y)\d y$$
for $u\in\fconvx$.

\section{Valuations on  Convex and Coercive Functions}\label{Valuations on fconv}

We can now prove Theorem \ref{theorem intro 2} and first establish the following result.

\begin{theorem}\label{Hessian valuation - third type}
Let $\zeta\in C(\R\times\R^n)$ have compact support and let $i\in\{0,\dots, n\}$. The functional $\oZ_{\zeta,i}\colon\fconv\to\R$, defined by
\begin{equation}\label{third type}
\oZ_{\zeta, i}(u)=\int_{\R^{2n}}\zeta(u(x),y)\d\Theta_i(u,(x,y)),
\end{equation}
is a  continuous, translation invariant, $i$-simple valuation. Moreover, if there exists $\zeta\in C(\R\times[0,+\infty))$ such that
$\zeta(t,y)=\xi(t,|y|)$ on $\R\times\R^n$,
then $\oZ_{\zeta, i}$ is also rotation invariant.
\end{theorem}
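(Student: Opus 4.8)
The strategy is to follow the proof of Theorem~\ref{theorem Hessian valuations} essentially line by line; the single place where the hypothesis ``$\zeta$ has compact support in $(x,y)$'' was used there is now compensated by the \emph{coercivity} of the functions in $\fconv$. Concretely, fix $r>0$ with $\zeta(t,y)=0$ whenever $|t|>r$ or $|y|>r$. If $u\in\fconv$, then the sublevel set $L=\{x\in\R^n\colon u(x)\le r\}$ is closed (since $u$ is l.s.c.) and bounded (since $u$ is coercive), hence compact; as the integrand $(x,y)\mapsto\zeta(u(x),y)$ vanishes on $\Gamma_u$ outside the bounded set $L\times \Bn_r$, and Hessian measures satisfy the uniform bound \eqref{serve} of Remark~\ref{locally-finite-Hess-measures} while $\zeta$ is bounded, the integral in \eqref{third type} is finite. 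Summability is obtained exactly as in Proposition~\ref{summability}, working with $u_r=\reg{u}{r}$; here one uses that coercivity of $u$ is equivalent to $0\in\interno(\dom(u^*))$, hence $0\in\interno(\dom(u^*+I_{\Bn_{1/r}}))$ for small $r$, so that $u_r\in\fconv$, and in particular $u_r$ is finite and continuous on $\R^n$ with $u=u_r$ on $\pi_1(\Gamma_u^r)$.

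For the valuation property, note first that if $u,v\in\fconv$ and $u\wedge v$ is convex, then $u\wedge v$ and $u\vee v$ are coercive, hence both lie in $\fconv$. With $r$ as above and $u_r=\reg{u}{r}$, property~\eqref{sechs} of Proposition~\ref{approximation lemma} gives $\zeta(u(x),y)=\zeta(u_r(x),y)$ for every $(x,y)\in\Gamma_u$ (for $|y|\le r$ the functions agree; for $|y|>r$ both sides vanish). From here the argument of Step~1 in the proof of Theorem~\ref{theorem Hessian valuations} applies verbatim: passing to the densities via Theorem~\ref{thm integral representation}, using the valuation property of Hessian measures (Theorem~\ref{Hessian measure valuations}), the inclusion--exclusion identities for subgradients and parallel sets (Theorem~\ref{IE subgradient point} and Proposition~\ref{val-prop-Pi-rho}), the comparison $\reg{(u\wedge v)}{r}=\reg{u}{r}\wedge\reg{v}{r}$ of Proposition~\ref{lemma 3}, and the local analysis of $u_r,v_r$ near a point where $u\ne v$, one verifies the pointwise density identity on $\Gamma_u\cup\Gamma_v$ and integrates it. Simplicity is immediate from Proposition~\ref{theorem low dimension}, which yields $\Theta_i(u,\cdot)\equiv0$ as soon as $\dimension(\dom u)<i$.

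Translation and rotation invariance come from the covariance of Hessian measures in Proposition~\ref{behavior Hessian measures}. For $x_0\in\R^n$, that proposition says $\Theta_i(u_{x_0},\cdot)$ is the push-forward of $\Theta_i(u,\cdot)$ under $(x,y)\mapsto(x+x_0,y)$; since $\zeta$ does \emph{not} depend on the space variable, the change of variables $(x,y)\mapsto(x+x_0,y)$ transforms $\oZ_{\zeta,i}(u_{x_0})=\int_{\R^{2n}}\zeta(u(x-x_0),y)\d\Theta_i(u_{x_0},(x,y))$ into $\int_{\R^{2n}}\zeta(u(x),y)\d\Theta_i(u,(x,y))=\oZ_{\zeta,i}(u)$. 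If moreover $\zeta(t,y)=\xi(t,|y|)$, one repeats the proof of Proposition~\ref{rotation invariance}: $\Gamma_{u_\phi}=\tilde\phi\,\Gamma_u$, and the change of variables $(\bar x,\bar y)=(\phi x,\phi y)$, together with $|\phi y|=|y|$ and Proposition~\ref{behavior Hessian measures}, gives $\oZ_{\zeta,i}(u_\phi)=\oZ_{\zeta,i}(u)$.

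The remaining, and main, point is continuity. Let $u_k\to u$ epi-converge in $\fconv$; set $w_k=\reg{u_k}{r}$ and $w=\reg{u}{r}$, all in $\fconv$, so that $w_k\to w$ epi-converges (Proposition~\ref{approximation lemma 2}), i.e.\ uniformly on compact sets. As before $\oZ_{\zeta,i}(u_k)=\int_{\R^{2n}}\zeta(w_k(x),y)\d\Theta_i(u_k,(x,y))$, and similarly for $u$, and one splits $\oZ_{\zeta,i}(u_k)-\oZ_{\zeta,i}(u)$ as in Step~2 of the proof of Theorem~\ref{theorem Hessian valuations} into $\int_{\R^{2n}}|\zeta(w_k(x),y)-\zeta(w(x),y)|\d\Theta_i(u_k,(x,y))$ plus a term that tends to $0$ because $\Theta_i(u_k,\cdot)\to\Theta_i(u,\cdot)$ weakly (Theorem~\ref{continuity}) and $(x,y)\mapsto\zeta(w(x),y)$ is continuous with compact support (continuous since $w$ is finite; compactly supported since $w$ is coercive and $\zeta$ is compactly supported). \textbf{The main obstacle is the first term:} unlike in Theorem~\ref{theorem Hessian valuations}, $\zeta(w_k(x),y)$ is not a priori supported in a fixed compact set of $x$'s, so uniform-on-compacta convergence of $w_k$ does not at once give uniform smallness of $|\zeta(w_k(x),y)-\zeta(w(x),y)|$. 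I would dispose of it by showing that the sublevel sets $\{x\colon w_k(x)\le r\}$ are contained in a fixed ball $\Bn_R$ for all large $k$: since $w$ is finite, convex and coercive and $w_k\to w$ uniformly on compacta, a convexity argument — comparing the values of $w_k$ at a minimizer of $w$, at an alleged far point of $\{w_k\le r\}$, and at the point where the segment joining them meets a sphere on which $w$, hence eventually $w_k$, exceeds $r$ — forces such an $R$. On $\Bn_R$ the convergence $w_k\to w$ is uniform and $\zeta$ is uniformly continuous, so $\zeta(w_k(\cdot),\cdot)\to\zeta(w(\cdot),\cdot)$ uniformly on $\R^{2n}$; combining this with the uniform bound $\Theta_i(u_k,C)\le c$ on a fixed compact $C\supseteq\Bn_R\times\Bn_r$ from Remark~\ref{locally-finite-Hess-measures} shows that the first term tends to $0$, completing the proof.
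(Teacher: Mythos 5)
Your proof is correct and follows the same route as the paper, which handles integrability exactly as you do (coercivity plus compact support of $\zeta$ make the integrand compactly supported and continuous on $\Gamma_u$, so local finiteness of the Hessian measures suffices), proves translation invariance by the same change of variables via Proposition~\ref{behavior Hessian measures}, and simply declares the valuation property, continuity and rotation invariance to be ``the same as'' in Theorem~\ref{theorem Hessian valuations}. The one point you add --- that the continuity step now genuinely requires the sublevel sets $\{w_k\le r\}$ to lie in a fixed ball for all large $k$, since $\zeta$ no longer has compact support in the space variable --- is a real detail that the paper's one-line reference glosses over, and your convexity argument for it is sound (one should compare $w_k$ on the sphere with $\max\{r,\,w_k(x_0)\}$ rather than with $r$ alone, since $\min w$ need not be below $r$, but that is cosmetic).
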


\begin{proof} As $\zeta$ has compact support and  $u$  is coercive,  the function $(x,y) \mapsto \zeta(u(x),y)$ has compact support as well. 
By Lemma \ref{continuity lemma} this function is also continuous on $\Gamma_u$; hence, as Hessian measures are locally finite,
the integral in \eqref{third type} is always finite. The proofs of the valuation property, continuity and rotation invariance are the same as
in the case described in the previous section.  

In order to prove translation invariance, let $u\in\fconv$ and $x_0\in\R^n$. For the translated function $u_{x_0}$, we have
$$
\oZ_{\zeta, i}(u_{x_0})=\int_{\R^{2n}}\zeta(u(x-x_0),y)\d\Theta_i(u_{x_0},(x,y)). 
$$
The claim follows by the change of variables $(x-x_0,y)=(\bar x,\bar y)$ and Proposition \ref{behavior Hessian measures}. 
\end{proof}

\begin{remark}
If $\zeta$ is as in the previous theorem and $u\in\fconv\cap C^2(\R^n)$, then
$$
\oZ_{\zeta, i}(u)=\int_{\R^n}\zeta(u(x),\nabla u(x))\,[\Hess u(x)]_{n-i}\d x.
$$
Hence Theorem \ref{Hessian valuation - third type} implies Theorem \ref{theorem intro 2}.
\end{remark}

\begin{remark} We have seen that a sufficient condition on $\zeta\in C(\R\times\R^n)$ such that for $i\in\{0,\dots,n\}$, the integral
\begin{equation}\label{last section 1}
\oZ_{\zeta, i}(u)=\int_{\R^{2n}} \zeta(u(x),y)\d\Theta_i(u,(x,y))
\end{equation}
is finite for every $u\in\fconv$, is that $\zeta$ has compact support. On the other hand, in some special cases necessary and sufficient conditions for integrability 
are known. For instance, in \cite{Colesanti-Ludwig-Mussnig-1} is proved that if $i=n$ and $\zeta=\zeta(t)$ is non-negative, then \eqref{last section 1} is finite for every $u\in\fconv$ 
if and only if 
$$
\int_0^{+\infty} t^{n-1} \zeta(t)\d t<+\infty.
$$
It would be interesting to understand what the corresponding conditions are in the general case. In other words, it would be interesting to have for given $i$ an explicit description 
of the space of functions
$$
{\rm Int}_i(\R\times\R^n) =\{\zeta\in C(\R\times\R^n)\colon \mbox{$\displaystyle\int_{\R^{2n}} \zeta(u(x),y)\d\Theta_i(u,(x,y))<\infty$  for every $u\in\fconv$}\}.
$$ 
For the one-dimensional case, some progress in this direction was obtained in \cite{Paoli}.
\end{remark}

\goodbreak
\subsection{Comparison with level-set based valuations}\label{sub-section comparison}

Let $u\in\fconv$. For $t\in\R$, we consider the sublevel set $\{u\le t\}=\{x\in\R^n\colon u(x)\le t\}$.
By the semi-continuity, convexity and coercivity of $u$, this is either a compact convex subset of $\R^n$, that is, a convex body, or 
the empty set. In particular, for every $k\in\{0,\dots,n\}$, the $k$th quermassintegral $W_k(\{u\le t\})$ is well defined (with the convention
$W_k(\emptyset)=0$ for every $k$). 

Let $\omega\in C(\R)$ have compact support. For $k\in\{0,\dots,n\}$, functionals of the form
\begin{equation}\label{level set based valuations}
\oX_{k}(u)=\int_\R \omega(t)\,W_k(\{u\le t\})\d t
\end{equation}
have been considered on $\fconv$ in \cite{CavallinaColesanti} (with slightly different assumptions on $\omega$), and
in the context of log-concave or quasi-concave functions
in \cite{BobkovColesantiFragala,ColesantiLombardi,Milman-Rotem,Milman-Rotem2}. 

The functional in (\ref{level set based valuations}) defines a continuous, rigid motion invariant valuation on $\fconv$. Indeed, the assumption of compact support on $\omega$ and coercivity assure that it is finite. The valuation property was observed in \cite{CavallinaColesanti}, and 
rigid motion invariance is an immediate consequence of the rigid motion invariance of quermassintegrals. Continuity with respect to
epi-convergence can be proved using Lemma 5 in \cite{Colesanti-Ludwig-Mussnig-1}, which concerns the relation between epi-convergence 
and convergence of level sets. We refer to functionals of the form \eqref{level set based valuations} as to {\em level-set based valuations}. 

\medskip

When $k=n$, we have  $W_n(K)=\kappa_n$ (the volume of the unit ball in $\R^n$) for every (non-empty) convex body. 
Hence
$$
\oX_{n}(u)=\xi(\min\nolimits_{\R^n}u)
$$
for $u\in\fconv$, where, for $t\in\R$, we set
$$
\xi(t)=\kappa_n\int_t^\infty\omega(s)\d s.
$$
For $k=0$, as $W_0$ is the Lebesgue measure, by the Layer Cake Principle (see Section 6.3 in \cite{CavallinaColesanti}), 
we have the equivalent representation 
\begin{equation}\label{level set zero}
\oX_{0}(u)=\int_{\dom(u)} \xi(u(x))\d x
\end{equation}
for $u\in\fconv$, where, for $t\in\R$, we now set
$$
\xi(t)=\int_t^{+\infty}\omega(s)\d s.
$$
In particular, \eqref{level set zero} is a valuation which can be written as a Hessian valuation and as a level-set based valuation.

The following examples show that the family of Hessian valuations is not included in that of level-set based valuations.

\begin{example} Let $n=1$. Note that there are only two types of level-set based valuations on $\fconvone$:
$$
\oX_0(u)=\xi_0(\min\nolimits_{\R}u),\quad
\oX_1(u)=\int_{\dom(u)}\xi_1(u(x))\d x,
$$
where $\xi_0, \xi _1\in C(\R)$. Let $\zeta\in C(\R^2)$ be even with respect to second variable and have compact support. 
We consider the Hessian valuation 
$\oZ$ defined on $\fconv$ as
$$
\oZ(u)=\int_{\Gamma_u}\zeta(u(x),y)\d \Theta_0(u,(x,y)).
$$
Assume that $\oZ$ can be written as linear combination of level-set based valuations, that is,
\begin{equation}\label{linear combination}
\oZ(u)=\xi_0(\min(u))+\int_{\dom(u)}\xi_1(u(x))\d x\end{equation}
for suitable $\xi_0$ and $\xi_1$. Evaluating \eqref{linear combination} on functions of the form
$
u(x)=t+I_{[-r,r]}(x)
$
with $t\in\R$ and $r>0$ and using Proposition \ref{extreme cases}, we obtain 
$$
\int_{\R}\zeta(t,y)\d y=\xi_0(t)+2r \xi_1(t).
$$
As the last term is the only one depending on $r$, we get $\xi_1\equiv0$. Hence \eqref{linear combination} becomes
$$
\oZ(u)=\xi_0(\min\nolimits_{\R}u).
$$
Plugging the function
$
u(x)=t+s|x|
$
into the last equality with $t\in\R$ and $s>0$, we deduce
$$
\int_{-s}^s\zeta(t,y)\d y=
2\int_0^s \zeta(t,y)\d y=\xi_0(t)
$$
for $t\in\R$ and $s>0$. Thus we obtain $\zeta\equiv0$.
\end{example}

\begin{example} Let $\zeta=\zeta(t,s)$ be a function in $C(\R\times[0,\infty))$ of compact support. Consider the Hessian valuation $\oZ$ defined on $\fconv$ by
$$
\oZ(u)=\int_{\Gamma_u}\zeta(u(x),\vert y\vert)\d\Theta_n(u,(x,y))=
\int_{\dom(u)}\zeta(u(x),|\nabla u(x)|)\d x.$$
Assume that $\oZ$ can be written as linear combination of level-set based valuations
$$
\oZ(u)=\sum_{k=0}^n\oX_{k}(u).
$$
Note that $\oZ$ is $n$-simple,
{\em i.e.}, it vanishes for all $u$ such that $\dim(\dom(u))<n$. The only level-set based valuations with this property are 
those for $k=0$. It is not hard to see that this implies that the right hand-side of the previous inequality reduces
to the term $\oX_{0}$, hence, on $\fconv$,
\begin{equation}\label{linear combination 2}
\oZ(u)=\int_{\dom(u)} \xi(u(x))\d x
\end{equation}
for some function $\xi\in C(\R)$. We evaluate \eqref{linear combination 2} at functions $u$ of the form
$
u(x)=s|x|+I_{\Bn_r},
$
with $s, r>0$, obtaining 
$$
\int_0^r\zeta(s t,s)t^{n-1}\d t=\int_0^r\xi(s t) t^{n-1}\d t.
$$
As $r>0$ is arbitrary, we deduce
$
\zeta(t,s)=\xi(t)
$
for all  $t\in\R$ and $s>0$, which is possible only if $\xi$ does not depend on the second variable.
\end{example}

%%%%%%%%%%%%

\section{The Space of 1-homogeneous Convex Functions}\label{The space of 1-homogeneous convex functions}

We analyze connections of the results on valuations on convex functions and valuations on convex bodies. Let
$$
\fconvh=\left\{u\in\fconvx\colon\dom(u)=\R^n,\ \mbox{$u$ is 1-homogeneous}\right\}.
$$
It is well known that $u\in\fconvh$ if and only if there exists a convex body $K$ in $\R^n$ such that $u=h_K$, the support function of $K$
(see Example \ref{example 2}). We recall that $\K^n$ denotes the family of convex bodies in $\R^n$.

We want to discuss valuations on $\fconvh$ and first look at Hessian measures on this space.
Denote by $u$ a generic element of $\fconvh$ and by $K$ the convex body such that $u=h_K$. For a point
$y\in\bd(K)$, let $N(K,y)$ be the normal cone to $K$ at $y$.
For $x\ne0$, we have $y\in\partial u(x)$ if and only if $y\in\bd(K)$ and $x\in N(K,y)$; moreover, $\partial u(0)=K$ (see \cite[Theorem 1.7.4]{Schneider}). 
Hence
$$
\Gamma_u=(\{0\}\times K)\cup\{(x,y)\in\R^{2n}\colon y\in\bd(K),\ x\in N(K,y)\}.
$$
In particular,
$$
\Gamma_u\subset\R^n\times K.
$$
The $1$-homogeneity of $u$ implies that the subdifferential is $0$-homogeneous, that is, $\partial u(t x)=\partial u(x)$ for $x\in\R^n$ and $t>0$.
Hence, for every $\eta\subset\R^{2n}$, $s\ge 0$ and $t>0$,
\begin{eqnarray*}
P_s(u,t\eta)&=&\{t x+s y\colon x\in\eta,\ y\in\partial u(x)\}\\
&=&t\left\{x+\frac s t y\colon(x,y)\in \eta\cap\Gamma_u\right\}\\
&=&t \, P_{s/t}(u,\eta).
\end{eqnarray*}
Taking Lebesgue measures and using the Steiner formula \eqref{Steiner}, we obtain the following homogeneity 
property for Hessian measures of support functions,
$$
\Theta_i(u,t\eta)=t^i\ \Theta_i(u,\eta)
$$
for 
$\eta\in\Borel(\R^{2n})$ and $i\in\{0,\dots,n\}$.

Hessian measures of support functions are closely connected to {\em support measures} of the corresponding convex bodies. For the definition
of the support measures $\Theta_0(K,\cdot), \dots, \Theta_n(K,\cdot)$ of a convex body $K$, we refer to \cite[Theorem 4.2.1]{Schneider}. The following relation 
was established in \cite[Corollary 5.9]{Colesanti-Hug-2000}. For every $\alpha\in\Borel(\sfe)$, $\beta\in\Borel(\R^n)$ and $i\in\{1,\dots,n\}$:
$$
\Theta_i(u,\hat\alpha\times\beta)=\tfrac 1n\ \Theta_{n-i}(K,\beta\times\alpha),
$$
where $\hat\alpha$  is the convex hull $\alpha\cup\{0\}$. 
In particular, 
$$
\Theta_i(u,\Bn\times\R^n)=\Theta_i(u,\Gamma_u\cap(\Bn\times\R^n))=\tfrac 1n\ \Theta_{n-i}(K,\R^n\times\sfe)=W_{n-i}(K)
$$
where $W_j(K)$, for $j\in\{0,\dots,n\}$, is the $j$th quermassintegral of $K$. The previous equality extends to the case $i=0$ as
\begin{eqnarray*}
\Theta_0(u,\Bn\times\R^n)=\Theta_0(u,\Gamma_u\cap(\Bn\times\R^n))=\hm^n(\pi_2(\Gamma_u))=\hm^n(K)=\ W_0(K).
\end{eqnarray*}

A real-valued valuation on $\K^n$ is a functional $\oY\colon\K^n\to\R$ which has the additivity property
$$
\oY(K\cup L)+\oY(K\cap L)=\oY(K)+\oY(L)
$$
for every $K,L\in\K^n$ such that $K\cup L\in\K^n$. The valuation $\oY$ is continuous, if it is continuous with respect to the Hausdorff metric.

\medskip\goodbreak
Hadwiger's celebrated classification theorem is the following result (see \cite{Hadwiger}). 

\begin{theorem}[Hadwiger] A functional $\oY\colon\K^n\to\R$ is a continuous valuation, invariant under rotations and translations if and only if
there exist $c_0,\dots,c_n\in\R$ such that
$$
\oY(K)=\sum_{i=0}^nc_i\ W_i(K)
$$ 
for every $K\in\K^n$. 
\end{theorem}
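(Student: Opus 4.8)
The plan is to prove the non-trivial implication --- that every continuous, rigid motion invariant valuation on $\K^n$ is a linear combination of quermassintegrals --- by induction on the dimension $n$, following Klain's route through \emph{simple} valuations; the reverse implication is immediate, since each $W_i$ is a continuous rigid motion invariant valuation on $\K^n$ (this is standard, via the Steiner formula; see \cite{Schneider}) and these properties are inherited by linear combinations. Recall that a valuation $\oY$ on $\K^n$ is \emph{simple} if $\oY(K)=0$ whenever $\dim K<n$. The heart of the argument is the \emph{Volume Lemma}: every continuous, rigid motion invariant, simple valuation on $\K^n$ equals $c\,W_0$ for some $c\in\R$, where $W_0$ is the volume. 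Granting this, the proof would be organized as follows.

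For the inductive step it is convenient to pass to the intrinsic volumes $V_0,\dots,V_n$, which are positive multiples of the quermassintegrals and are \emph{intrinsic}, i.e.\ $V_j(K)$ for $j\le\dim K$ does not depend on the ambient dimension. The base case $n=1$ is elementary: for segments $[a,b]$ the valuation property forces $\oY([0,s+t])+\oY(\{0\})=\oY([0,s])+\oY([0,t])$, i.e.\ the length-dependent part of $\oY$ satisfies Cauchy's functional equation, and continuity then gives $\oY([a,b])=c_0+c_1(b-a)$, a combination of $W_0$ and $W_1$. Assuming the theorem in dimension $n-1$, let $\oY$ be a continuous, rigid motion invariant valuation on $\K^n$ and fix a hyperplane $H\subset\R^n$. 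The restriction of $\oY$ to convex bodies contained in $H\cong\R^{n-1}$ is a continuous valuation invariant under the rigid motions of $H$, hence by the inductive hypothesis it equals $\sum_{j=0}^{n-1}c_j V_j$ there. By intrinsicness, $\oY'=\oY-\sum_{j=0}^{n-1}c_j V_j$ vanishes on all bodies contained in $H$; by translation invariance it vanishes on bodies in every translate of $H$, and by rotation invariance on bodies in every hyperplane. Since every body of dimension $<n$ lies in some hyperplane, $\oY'$ is simple, so the Volume Lemma yields $\oY'=c_n V_n$. Thus $\oY=\sum_{j=0}^n c_j V_j$, and rewriting the $V_j$ in terms of the $W_i$ (and relabelling the coefficients) gives the desired form.

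It remains to prove the Volume Lemma, which is where the real work lies. Let $\oY$ be continuous, rigid motion invariant and simple. First I would evaluate $\oY$ on axis-parallel rectangular boxes $B=\prod_{k=1}^n[0,a_k]$: slicing $B$ by a hyperplane parallel to a facet yields two boxes whose intersection is $(n-1)$-dimensional, so by simplicity and the valuation property $\oY(B)$ is additive separately in each edge length $a_k$. Together with continuity, this forces $\oY(B)=c\,a_1\cdots a_n=c\,W_0(B)$ for a single constant $c$, and by rotation invariance the \emph{same} $c$ works for boxes of arbitrary orientation. Hence $W:=\oY-c\,W_0$ is again a continuous, rigid motion invariant, simple valuation, and it vanishes on every box. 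The remaining step is to deduce that $W$ vanishes on all polytopes; once this is known, continuity together with the density of polytopes in $\K^n$ (Hausdorff metric) gives $W\equiv0$, completing the proof. This deduction is carried out by a dissection argument: one reduces a general polytope to simplices using the valuation property and simplicity, and then reduces simplices to boxes by cutting them into right-angled pieces that can be rearranged by rigid motions --- here Hadwiger's original cylinder trick, or Klain's induction on the number of facets, can be used (see \cite{Schneider}).

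I expect the last step of the Volume Lemma --- passing from ``$W$ vanishes on all rectangular boxes'' to ``$W$ vanishes on all polytopes'' --- to be the main obstacle. Everything preceding it is comparatively soft: Cauchy's functional equation together with the bookkeeping forced by the valuation and simplicity properties. The final step, by contrast, genuinely exploits the geometry of polytope dissections and uses rotation invariance in an essential way; indeed the conclusion fails for merely translation invariant simple valuations, so no purely formal manipulation can replace it.
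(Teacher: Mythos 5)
The paper does not prove this statement at all: it is Hadwiger's classical theorem, quoted with a reference to \cite{Hadwiger} (see also \cite[Section 6.4]{Schneider}), and the paper only uses it and translates it into the language of $\fconvh$. So there is no internal proof to compare against; what can be judged is whether your outline would actually yield a proof. Its architecture is the standard Klain reduction and is sound: the easy direction via the Steiner formula, the base case $n=1$ via Cauchy's equation plus continuity, the inductive step by restricting to a hyperplane, subtracting $\sum_{j<n}c_jV_j$, and observing that the difference is simple, and finally the Volume Lemma. All of those steps are correct as you state them (including the point that multi-additivity in the edge lengths plus continuity forces $\oY(B)=c\,a_1\cdots a_n$ on boxes).

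The genuine gap is exactly where you suspect it, but it is worse than an omitted computation: the mechanism you propose for it cannot work. You suggest passing from boxes to simplices by ``cutting them into right-angled pieces that can be rearranged by rigid motions,'' i.e.\ by a finite dissection argument. By Dehn's theorem, for $n\ge 3$ a simplex is in general \emph{not} equidecomposable with a box of the same volume under rigid motions, so no scissors-congruence argument can show that a simple rigid-motion invariant valuation vanishing on boxes vanishes on simplices; indeed the Dehn invariant itself is a simple, rigid-motion invariant (non-continuous) valuation that separates the cube from the regular tetrahedron. The actual proofs of the Volume Lemma — Hadwiger's original one and Klain's — must therefore use continuity in an essential way at this stage: Klain, for instance, first establishes the reflection identity $\oY(-K)=(-1)^n\oY(K)$ for simple valuations, shows vanishing on cylinders (prisms) over lower-dimensional bodies, and then handles simplices through a limiting argument with long thin prisms and an induction in which continuity, not dissection, closes the loop. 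Since you explicitly defer to \cite{Schneider} for this step and correctly identify it as the crux, the outline is acceptable as a reduction to the literature, but the sentence describing how the reduction from simplices to boxes is achieved should be corrected: it is not a rearrangement of congruent pieces, and presenting it as such hides the one place where the hypothesis of continuity (beyond its use in Cauchy's equation) is indispensable.
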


To rewrite Hadwiger's theorem as a result on $\fconvh$, we need the following facts. We associate with $\oY:\K^n\to \R$ the functional $\bar\oY: \fconvh\to \R$, defined as $\bar\oY(h_K)=\oY(K)$.
First, let $K_j$ be a sequence of convex bodies in $\R^n$ and let $u_j=h_{K_j}$ for every $j\in\N$. 
The sequence $K_j$ converges to a convex body $K$ with respect to the Hausdorff metric  if and only if the sequence of functions $u_j$ converges point\-wise to $u=h_K$ in $\R^n$
(and uniformly on compact sets). This is in turn equivalent to the fact that $u_j$ epi-converges to $u$. Hence, $\oY: \K^n \to \R$ is continuous with respect to the Hausdorff metric if and only if $\bar\oY: \fconvh \to \R$ is epi-continuous.

Second, if $K,L\in\K^n$ are such that $K\cup L\in\K^n$, then
$$
h_{K\cup L}=h_K\vee h_L,\quad
h_{K\cap L}=h_K\wedge h_L.
$$
This follows from Example \ref{example 2} and Proposition \ref{minmaxconjugates}.
Hence, if $\oY: \K^n \to \R$ is a valuation, then $\bar\oY: \fconvh \to \R$ is also a valuation.

Third, for every $\phi\in\O(n)$, the support function of $\phi K$ is given by
$h_{\phi K}(x)=h_K(\phi^t x)$ for every $x\in\R^n$. Hence, $\oY: \K^n \to \R$ is rotation invariant if and only if 
$\bar \oY$ is also rotation invariant.

Finally, for every $x_0\in\R^n$, the support function of the convex body $K+x_0$ is given by
$$
h_{K+x_0}(x)=h_K(x)+\langle x,x_0\rangle=u(x)+\langle x,x_0\rangle
$$
for every $x\in\R^n$. Hence, $\oY: \K^n \to \R$ is translation invariant if and only if $\bar\oY(u+w)= \bar\oY(w)$ for every linear 
function $w:\R^n\to \R$.  In general, we say that $\oZ:\fconvh \to \R$ is unchanged by addition of linear functions if 
$\oZ(u+w)=\oZ(u)$ for every $u\in\fconvh$ and linear function $w$.

As a consequence of the previous considerations, we obtain the following version of Hadwiger's theorem.

\begin{theorem}[Hadwiger] 
A functional $\oZ\colon\fconvh\to\R$ is a continuous and rotation invariant  valuation that is unchanged by addition of linear functions if and only if 
there exists $c_0,\dots,c_n\in\R$ such that
$$
\oZ(u)=\sum_{i=0}^n c_i\ \Theta_i(u,\Bn\times\R^n)
$$
for every $u\in\fconvh$.
\end{theorem}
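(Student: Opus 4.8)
The plan is to deduce this from the classical Hadwiger theorem stated just above, by transporting both sides of the equivalence through the bijection $K\mapsto h_K$ between $\K^n$ and $\fconvh$ together with the dictionary assembled in the preceding paragraphs. Concretely, to a functional $\oZ\colon\fconvh\to\R$ I associate $\oY\colon\K^n\to\R$, $\oY(K)=\oZ(h_K)$, and conversely; the content of the preceding paragraphs is precisely that $\oZ$ is a continuous, rotation invariant valuation unchanged by addition of linear functions if and only if $\oY$ is a continuous, translation and rotation invariant valuation on $\K^n$.

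I would spell this translation out as follows. The valuation property transfers because $h_{K\cup L}=h_K\vee h_L$ and $h_{K\cap L}=h_K\wedge h_L$ whenever $K\cup L\in\K^n$ (from Example \ref{example 2} and Proposition \ref{minmaxconjugates}), and these maxima and minima again lie in $\fconvh$. Continuity transfers because Hausdorff convergence of a sequence of convex bodies is equivalent to epi-convergence of their support functions. Rotation invariance transfers because $h_{\phi K}=(h_K)_\phi$ for $\phi\in\O(n)$. Finally, since $h_{K+x_0}=h_K+\langle\cdot,x_0\rangle$, translation invariance of $\oY$ is exactly the statement that $\oZ$ is unchanged by addition of linear functions. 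Thus Hadwiger's theorem on $\K^n$ becomes equivalent to the assertion that $\oZ$ has the stated properties if and only if $\oY(K)=\sum_{i=0}^n c_i W_i(K)$ for some constants $c_0,\dots,c_n$.

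To conclude I would rewrite the right-hand side in terms of Hessian measures using the identity $\Theta_i(u,\Bn\times\R^n)=W_{n-i}(K)$ for $u=h_K$, valid for every $i\in\{0,\dots,n\}$: this was verified above for $i\geq 1$ via the relation to support measures and separately for $i=0$ via $\Theta_0(u,\Bn\times\R^n)=\hm^n(\pi_2(\Gamma_u))=\hm^n(K)$. Substituting $W_i(K)=\Theta_{n-i}(h_K,\Bn\times\R^n)$ and relabeling the coefficients turns $\sum_{i=0}^n c_i W_i(K)$ into $\sum_{i=0}^n c_i\,\Theta_i(u,\Bn\times\R^n)$, which is the claimed formula, and this holds for all $u\in\fconvh$. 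The only point requiring a little care is this index bookkeeping, together with checking that the correspondence ``translation invariance of $\oY$ $\leftrightarrow$ $\oZ$ unchanged by linear functions'' is invoked in the correct direction on each side of the equivalence; once the dictionary is in place there is no genuine obstacle, since all of its ingredients have already been recorded in this section.
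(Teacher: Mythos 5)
Your proposal is correct and follows exactly the route the paper intends: the theorem is stated there as an immediate consequence of the classical Hadwiger theorem via the dictionary $K\mapsto h_K$ (continuity, valuation property, rotation invariance, and translation invariance versus invariance under addition of linear functions) together with the identity $\Theta_i(h_K,\Bn\times\R^n)=W_{n-i}(K)$, including the relabeling of the coefficients that you carry out. Nothing is missing.
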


\medskip
Without requiring that the functionals are unchanged by addition of linear functions, we obtain many further functionals.

\begin{theorem} Let $\zeta\in C(\R\times[0,+\infty))$. For $i\in\{0,\dots,n\}$, the functional $\oZ_{\zeta, i}\colon\fconvh\to\R$, defined by
\begin{equation}\label{Alesker type}
\oZ_{\zeta, i}(u)=\int_{\Bn\times\R^n} \zeta(u(x),|y|)\d\Theta_i(u,(x,y)),
\end{equation}
is a continuous and rotation invariant valuation on $\fconvh$. Consequently, $K\mapsto \oZ_{\zeta, i}(h_K)$ is 
a continuous and rotation invariant valuation on $\K^n$.
\end{theorem}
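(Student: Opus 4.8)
The plan is to carry out the proof of Theorem~\ref{theorem Hessian valuations} in the present setting, using the special geometry of $\fconvh$ to compensate for the lack of a compact support assumption on $\zeta$. Fix $u=h_K\in\fconvh$ with $K\in\K^n$. Since $\Gamma_u\subset\R^n\times K$ and $\Theta_i(u,\cdot)$ is concentrated on $\Gamma_u$, the integral in \eqref{Alesker type} is effectively taken over the compact set $\Bn\times K$; as $u$ is finite and continuous on $\R^n$, the integrand $(x,y)\mapsto\zeta(u(x),|y|)$ is continuous and bounded there, and Hessian measures are locally finite by \eqref{serve}, so $\oZ_{\zeta,i}(u)$ is finite and the functional is well defined.

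Next I would establish the valuation property. If $u,v\in\fconvh$ satisfy $u\wedge v\in\fconvh$, then $u,v,u\vee v,u\wedge v\in\fconvh\subset\fconvx$, so the argument from the proof of Theorem~\ref{theorem Hessian valuations} applies: by the integral representation of Theorem~\ref{thm integral representation}, it suffices to prove the pointwise identity
\begin{align*}
\zeta(u(x),|y|)\,\theta_i(u,(x,y))&+\zeta(v(x),|y|)\,\theta_i(v,(x,y))\\
&=\zeta((u\vee v)(x),|y|)\,\theta_i(u\vee v,(x,y))+\zeta((u\wedge v)(x),|y|)\,\theta_i(u\wedge v,(x,y))
\end{align*}
for $\hm^n$-almost every $(x,y)\in\Gamma_u\cup\Gamma_v=\Gamma_{u\vee v}\cup\Gamma_{u\wedge v}$, and then to multiply by the characteristic function of $\Bn$ in the first variable and integrate. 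This identity is obtained exactly as in Theorem~\ref{theorem Hessian valuations}: at points with $u(x)=v(x)$ it reduces to the density form of the valuation property of Hessian measures (Theorem~\ref{Hessian measure valuations}), and at points with, say, $u(x)>v(x)$ one distinguishes whether $(x,y)\in\Gamma_u\cap\Gamma_v$ and uses that, since $u$ and $v$ are continuous, there is a neighbourhood of $x$ on which $u>v$, so that $u\vee v=u$ and $u\wedge v=v$ there and the relevant densities and function values agree at $(x,y)$ (the Lipschitz-regularization step of Theorem~\ref{theorem Hessian valuations} is unnecessary here since elements of $\fconvh$ are already continuous). Rotation invariance follows as in Proposition~\ref{rotation invariance}: for $\phi\in\O(n)$ one has $u_\phi\in\fconvh$, $\Gamma_{u_\phi}=\tilde\phi\,\Gamma_u$, $\tilde\phi$ maps $\Bn\times\R^n$ onto itself, and $|\phi x|=|x|$, $|\phi y|=|y|$, so the change of variables $(\bar x,\bar y)=\tilde\phi(x,y)$ together with Proposition~\ref{behavior Hessian measures} gives $\oZ_{\zeta,i}(u_\phi)=\oZ_{\zeta,i}(u)$.

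The substantial part, and the step I expect to require the most care, is continuity. Let $u_k=h_{K_k}$ epi-converge to $u=h_K$; equivalently $K_k\to K$ in the Hausdorff metric, and there is $R>0$ with $K_k,K\subset\interno(\Bn_R)$ for all $k$. Then $\Gamma_{u_k},\Gamma_u\subset\R^n\times\Bn_R$, the $u_k$ are uniformly bounded on $\Bn$ and converge uniformly to $u$ on $\Bn$ (epi-convergence of finite convex functions being uniform convergence on compacta), and each $\oZ_{\zeta,i}(u_k)$ is an integral over the fixed compact set $C:=\Bn\times\Bn_R$. Uniform continuity of $\zeta$ on $[-R,R]\times[0,R]$ makes $(x,y)\mapsto\zeta(u_k(x),|y|)$ converge uniformly on $C$ to $(x,y)\mapsto\zeta(u(x),|y|)$, and $\Theta_i(u_k,C)$ is bounded uniformly in $k$ by \eqref{serve}; hence $\int_C\big(\zeta(u_k(x),|y|)-\zeta(u(x),|y|)\big)\,\d\Theta_i(u_k,(x,y))$ tends to $0$. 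For the remaining term $\int_C\zeta(u(x),|y|)\,\d\Theta_i(u_k,(x,y))$ I would show $\Theta_i(u,\bd C)=0$ and then pass to the limit using the weak convergence $\Theta_i(u_k,\cdot)\to\Theta_i(u,\cdot)$ from Theorem~\ref{continuity}, sandwiching the characteristic function of $C$ between compactly supported continuous functions. The genuinely new input is that $\Theta_i(u,\sn\times\R^n)=0$: the face $\Bn\times\bd(\Bn_R)$ of $C$ misses $\Gamma_u$ by the choice of $R$, while the $1$-homogeneity relation $\Theta_i(u,t\eta)=t^i\Theta_i(u,\eta)$ established for $\fconvh$ earlier in this section gives $\Theta_i(u,\{x\colon|x|=t\}\times\R^n)=t^i\,\Theta_i(u,\sn\times\R^n)$ for every $t>0$, so that if $\Theta_i(u,\sn\times\R^n)$ were positive, summing over countably many $t\in[1,2]$ would contradict the local finiteness of $\Theta_i(u,\cdot)$ on the bounded set $\{(x,y)\colon 1\le|x|\le2\}$. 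Finally, the statement for convex bodies is immediate from the correspondence recalled in this section — $K_j\to K$ in the Hausdorff metric if and only if $h_{K_j}\to h_K$ epi-convergently, $h_{K\cup L}=h_K\vee h_L$ and $h_{K\cap L}=h_K\wedge h_L$ whenever $K\cup L\in\K^n$, and $h_{\phi K}=(h_K)_\phi$ for $\phi\in\O(n)$ — so that $K\mapsto\oZ_{\zeta,i}(h_K)$ inherits continuity, the valuation property, and rotation invariance from $\oZ_{\zeta,i}$ on $\fconvh$.
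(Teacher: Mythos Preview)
Your proposal is correct and follows essentially the same route as the paper: well-definedness from the compactness of $\Gamma_u\cap(\Bn\times\R^n)$ together with local finiteness of Hessian measures, and valuation property, continuity and rotation invariance handled by rerunning the argument of Theorem~\ref{theorem Hessian valuations}. The paper's own proof is extremely terse and simply refers back to that theorem.

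The one place where you go beyond the paper is worth noting. In Theorem~\ref{theorem Hessian valuations} the integrand $\zeta(w(x),x,y)$ is continuous with compact support in $(x,y)$, so weak convergence of the Hessian measures applies directly. Here, however, the integration is against the indicator of $\Bn\times\R^n$, which is not continuous, so the weak convergence step is not literally ``as before''. You deal with this by showing $\Theta_i(u,\sn\times\R^n)=0$ via the homogeneity relation $\Theta_i(u,t\eta)=t^i\Theta_i(u,\eta)$ for $u\in\fconvh$ (summing the disjoint shells $\{|x|=t\}\times\R^n$ for countably many $t\in[1,2]$ inside a set of finite measure), and then sandwiching $\chi_C$ between continuous compactly supported functions. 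This is a clean way to close the gap; the paper leaves this point implicit. Your observation that Lipschitz regularization is unnecessary here because elements of $\fconvh$ are already finite and continuous is also a legitimate simplification.
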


\begin{proof} Finiteness of $\oZ_{\zeta, i}$ follows from the compactness of $\Gamma_u\cap(\Bn\times\R^n)$, the continuity of $u$ and $\zeta$, and the local finiteness of Hessian measures.  
The proofs of the valuation property, continuity and rotation invariance are as in the proofs of Theorem \ref{theorem Hessian valuations}. 
\end{proof}

Alesker \cite{Alesker-99} obtained a classification of continuous and rotation invariant valuations on convex bodies that are polynomial with respect to translations and showed that these are functionals of type (\ref{Alesker type}). 

\subsection*{Acknowledgments}
The work of Monika Ludwig and Fabian Mussnig was supported, in part, by Austrian Science Fund (FWF) Project P25515-N25.  The work of Monika Ludwig was also supported by the National Science Foundation under Grant No. DMS-1440140 while she was in residence at the Mathematical Sciences Research Institute in Berkeley, California, during the Fall 2017 semester. The work of Andrea Colesanti 
was supported by the G.N.A.M.P.A. and by the F.I.R. project 2013: Geometrical and Qualitative Aspects of PDE's.

%\bibliography{../arbeiten}\bibliographystyle{../meine}

\begin{thebibliography}{99}

\bibitem{Alesker-99} S. Alesker, {\em Continuous rotation invariant valuations on convex sets}, Ann. Math. {\bf 149} (1999), 977--1005. %Erratum, Ann. Math. {\bf 166} (2007), 947--948.

\bibitem{Alesker-06} S. Alesker, {\em Valuations on convex sets, non-commutative
determinants, and pluripotential theory}, J. Geom. Anal. {\bf 18} (2008), 651--686.

\bibitem{Alesker-17} S. Alesker, {\em Valuations on convex functions and convex sets and Monge-Amp\`ere operators}, preprint, arXiv:1703.08778 (2017).

\bibitem{BaryshnikovGhristWright}
Y.~Baryshnikov, R.~Ghrist, and M.~Wright, {\em Hadwiger's {T}heorem for
  definable functions}, Adv. Math. {\bf 245} (2013), 573--586.

\bibitem{BobkovColesantiFragala}
S.~G. Bobkov, A.~Colesanti, and I.~Fragal{\`a}, {\em Quermassintegrals of
  quasi-concave functions and generalized {P}r\'ekopa-{L}eindler inequalities},
  Manuscripta Math. {\bf 143} (2014), 131--169.

\bibitem{Caffarelli-Nirenberg-Spruck} L. Caffarelli, L. Nirenberg and J. Spruck, {\em The Dirichlet problem for nonlinear second order 
elliptic equations, III: functions of the eigenvalues of the Hessian}, Acta Math. {\bf 155} (1985), 261-301.

\bibitem{CavallinaColesanti}
L.~Cavallina and A.~Colesanti, {\em Monotone valuations on the space of convex
  functions}, Anal. Geom. Metr. Spaces {\bf 3} (2015), 167--211.

\bibitem{Colesanti-1997} A. Colesanti, {\em A Steiner type formula for convex functions}, Mathematika {\bf 44} (1997), 195--214.

\bibitem{ColesantiFragala}
A.~Colesanti and I.~Fragal{\`a}, {\em The first variation of the total mass of
  log-concave functions and related inequalities}, Adv. Math. {\bf 244} (2013),
  708--749.

\bibitem{Colesanti-Hug-2000b} A. Colesanti, D. Hug, {\em Steiner type formulas and weighted measures of singularities for semi-convex functions},
Trans. Amer. Math. Soc. {\bf 352} (2000), 3239--3263.

\bibitem{Colesanti-Hug-2000} A. Colesanti, D. Hug, {\em Hessian measures of semi-convex functions and
applications to support measures of convex bodies}, Manuscripta Math. {\bf 101} (2000), 209--238.

\bibitem{ColesantiLombardi}
A.~Colesanti and N.~Lombardi, Valuations on the space of quasi-concave
  functions, Geo\-metric aspects of functional analysis,  71--105, Lecture Notes in Mathematics 2169,  Springer International Publishing, Cham, 2017.

\bibitem{Colesanti-Lombardi-Parapatits} A. Colesanti, N. Lombardi, L. Parapatits, {\em Translation invariant valuations on the space of quasi-concave functions}, 
preprint, arXiv:1703.06867  (2017).



\bibitem{Colesanti-Ludwig-Mussnig-2} A. Colesanti, M. Ludwig, F. Mussnig, {\em Minkowski valuations on convex functions}, Calc. Var. Partial Differential Equations {\bf 56} (2017), 56:162.

\bibitem{Colesanti-Ludwig-Mussnig-1} A. Colesanti, M. Ludwig, F. Mussnig, {\em Valuations on convex functions}, Int. Math. Res. Not. IMRN, in press.

\bibitem{Colesanti-Salani} A. Colesanti, P. Salani, {\em Generalised solutions of Hessian equations}, Bull. Austral. Math. Soc. {\bf 56}
(1997), 459--466.

\bibitem{Federer} H. Federer, {\it Geometric Measure Theory}, Springer, Berlin, 1979.

\bibitem{Fu-1989} J. H. G. Fu, {\em Monge-Amp\`ere functions. I}, Indiana Univ. Math. J. {\bf 38} (1989), 745-771.

\bibitem{Hadwiger} H. Hadwiger, {\em Vorlesungen \"uber Inhalt. Oberfl\"ache und Isoperimetrie}, Springer, Berlin,
1957.

\bibitem{HU1} J.-B. Hiriart-Urruty, {\em Lipschitz $r$-continuity of the approximate subdifferential of a convex function}, Math. Scand. {\bf 47} (1980), 123--134.

\bibitem{HU2} J.-B. Hiriart-Urruty, {\em Extensions of Lipschitz functions}, J. Math. Anal. Appl. {\bf 77} (1980), 539--554.

\bibitem{Kone} H. Kone, {\em Valuations on {O}rlicz spaces and {$L\sp \varphi$}-star sets}, Adv.
  in Appl. Math. {\bf 52} (2014), 82--98.

\bibitem{Ludwig:Fisher}
M.~Ludwig, {\em Fisher information and valuations}, Adv. Math. {\bf 226}
  (2011), 2700--2711.

\bibitem{Ludwig:survey} M.~Ludwig, {\em Valuations on function spaces}, Adv. Geom. {\bf 11} (2011),
  745--756.

\bibitem{Ludwig:sobval} M.~Ludwig, {\em Valuations on {S}obolev spaces}, Amer. J. Math. {\bf 134}
  (2012), 827--842.

\bibitem{Ludwig:MM}
M.~Ludwig, {\em Covariance matrices and valuations}, Adv. in Appl. Math. {\bf
  51} (2013), 359--366.
  
\bibitem{Ma2016}
D.~Ma, {\em Real-valued valuations on {S}obolev spaces}, Sci. China Math. {\bf
  59} (2016), 921--934.


\bibitem{Milman-Rotem} V. Milman, L. Rotem, Mixed integrals and related inequalities, J. Funct. Analysis 264 (2013), 570-604.

\bibitem{Milman-Rotem2} V. Milman, L. Rotem, $\alpha$-concave functions and a functional extension of mixed volumes, Electr. Res. Announcements
Math. Sci. 20 (2013), 1-11.

\bibitem{Mussnig} F. Mussnig, {\em Valuations on log-concave functions}, 
preprint,  arXiv:1707.06428  (2017).

  
\bibitem{Ober2014}
M.~Ober, {\em {$L\sb p$}-{M}inkowski valuations on {$L\sp q$}-spaces}, J. Math.
  Anal. Appl. {\bf 414} (2014), 68--87.
  
\bibitem{Paoli} G. Paoli, {\em Condizioni di integrabilit\`a per integrali hessiani in dimensione uno}, Tesi di Laurea Magistrale, Universit\`a di Firenze (in preparation).


\bibitem{Rockafellar} T.\ Rockafellar, {\it Convex Analysis}, Princeton University
Press, Princeton, 1970.

\bibitem{Rockafellar-Wets} T.\ Rockafellar, R. Wets, {\em Variational Analysis}, Springer, Berlin, 1998.

\bibitem{Schneider} R.~Schneider, {\em Convex {B}odies: the {B}runn-{M}inkowski {T}heory}, {S}econd
  expanded ed., Encyclopedia of Mathe\-matics and its Applications, vol. 151,
  Cambridge University Press, Cambridge, 2014.

%\bibitem{TrudingerWang1997} N.~S. Trudinger and X.-J. Wang, {\em Hessian measures. {I}}, Topol. Methods Nonlinear Anal. {\bf 10} (1997), 225--239, Dedicated to Olga  Ladyzhenskaya. 

\bibitem{Trudinger:Wang1999}
N.~S. Trudinger and X.-J. Wang, {\em Hessian measures. {II}}, Ann. of Math. (2)
  {\bf 150} (1999),  579--604.

%\bibitem{Trudinger:Wang2002} N.~S. Trudinger and X.-J. Wang, {\em Hessian measures. {III}}, J. Funct. Anal.   {\bf 193} (2002), 1--23.


\bibitem{Tsang:Lp}
A.~Tsang, {\em Valuations on ${L}^p$ spaces}, Int. Math. Res. Not. {\bf 20}
  (2010), 3993--4023.

\bibitem{Tsang:Minkowski}
A.~Tsang, {\em Minkowski valuations on ${L}^p$-spaces}, Trans. Amer. Math. Soc.
  {\bf 364} (2012), 6159--6186.

\bibitem{Tuo_Wang_semi}
T.~Wang, {\em Semi-valuations on {${\rm BV}(\mathbb R\sp n)$}}, Indiana Univ.
  Math. J. {\bf 63} (2014), 1447--1465.

\end{thebibliography}

%\parindent=0pt

\end{document}